\documentclass[12pt, leqno]{article}
\usepackage{fullpage} 
\usepackage{amsmath}
\usepackage{amssymb}
\usepackage{natbib}
\bibliographystyle{plainnat}
\usepackage{stmaryrd}
\usepackage{bbding}
\usepackage{enumerate}
\usepackage{graphicx}
\usepackage{multicol}
\usepackage{setspace}
\usepackage{boxedminipage}
\usepackage{nicefrac}
\usepackage{multicol}
\usepackage{amsthm}
  \usepackage[pdftex,
  bookmarks = false,
  pdfstartview = FitBH,
  linktocpage = true,
  pagebackref = true,
  pdfhighlight = /O,
  colorlinks=true,
  linkcolor=blue,
  citecolor=blue,
  filecolor = blue,
  urlcolor = blue,
  menucolor = blue,
]{hyperref}




\theoremstyle{definition}
\newtheorem{thm}{Theorem}
\newtheorem{lem}[thm]{Lemma}
\newtheorem{cor}[thm]{Corollary}
\newtheorem{prop}[thm]{Proposition}

\newtheorem{defn}[thm]{Definition}
\newtheorem{Q}[thm]{Question}

\numberwithin{equation}{section}
\numberwithin{enumi}{section}
\numberwithin{thm}{section}


    \title{Definability Aspects of the Denjoy Integral}

\author{Sean Walsh\footnote{Department of Logic and Philosophy of Science, 5100 Social Science Plaza, University of California, Irvine, Irvine, CA 92697-5100, swalsh108@gmail.com, walsh108@uci.edu}}

\date{September 10, 2016}

\begin{document}

\maketitle

\begin{abstract}
The Denjoy integral is an integral that extends the Lebesgue integral and can integrate any derivative. In this paper, it is shown that the graph of the indefinite Denjoy integral~$f\mapsto \int_a^x f$ is a coanalytic non-Borel relation on the product space~$M[a,b]\times C[a,b]$, where~$M[a,b]$ is the Polish space of real-valued measurable functions on~$[a,b]$ and where~$C[a,b]$ is the Polish space of real-valued continuous functions on~$[a,b]$. Using the same methods, it is also shown that the class of indefinite Denjoy integrals, called~$ACG_{\ast}[a,b]$, is a coanalytic but not Borel subclass of the space~$C[a,b]$, thus answering a question posed by Dougherty~and~Kechris. Some basic model theory of the associated spaces of integrable functions is also studied. Here the main result is that, when viewed as an~$\mathbb{R}[X]$-module with the indeterminate~$X$ being interpreted as the indefinite integral,  the space of continuous functions on the  interval~$[a,b]$ is elementarily equivalent to the Lebesgue-integrable and Denjoy-integrable functions on this interval, and each is stable but not superstable, and that they all have a common decidable theory when viewed as $\mathbb{Q}[X]$-modules. 
\end{abstract}

\maketitle

\newpage

\tableofcontents

\newpage

\section{Introduction}\label{Den:Den01}

The Denjoy integral is an integral that extends the integrals of Riemann and Lebesgue and that can integrate any derivative. This paper studies the Denjoy integral from two perspectives from mathematical logic, namely that of descriptive set theory and model theory. From the perspective of descriptive set theory, the natural question to ask is: how hard is it to define the Denjoy integral when viewed as a subset of a Polish space? Recall that a Polish space is a separable topological space whose topology can be given by a complete metric, and the measure of complexity of definitions on Polish spaces is induced by the hierarchy of Borel sets: open and closed sets are regarded as minimally complex, Borel sets formed from them by the operations of countable union and intersection are regarded as more complex, and continuous images of Borel sets and their complements are regarded as yet more complex. The continuous images of Borel sets turn out to be the same as the continuous images of closed sets, and these sets are called analytic sets, and their complements are called coanalytic sets. Our results show that certain sets pertaining to the Denjoy integral are coanalytic but not Borel, and thus are comparatively complex under the measure of complexity coming from descriptive set theory.

As with the Riemann and Lebesgue integrals, the indefinite Denjoy integrals~$F(x)=\int_a^x f$ of real-valued functions~$f$ on~$[a,b]$ are themselves continuous, and so it is natural to view them as a subset of the Polish space of real-valued continuous functions defined on~$[a,b]$. This space is denoted by~$C[a,b]$, and its topological structure is taken to be induced by the supremum metric. One of our main results (cf. Theorem~\ref{Den:cor:main2prior} below) says that the set of indefinite Denjoy integrals is coanalytic but not Borel when viewed as a subspace of~$C([a,b])$. This is important for two reasons. First, this result provides another example of a logically complex~object that occurs naturally in analysis. For a survey of other such examples, see Becker~\cite{Becker1992}. The second reason that this result is important is that it answers a question of Dougherty and Kechris from their earlier study (\cite{DoughertyKechris1991}) of descriptive set theory and Denjoy integration.

Prior to stating Dougherty and Kechris' question, and describing their own results, it is necessary to first present the definition of the Denjoy integral. There are many equivalent definitions of this integral, but the one which is most apt for our purposes is a generalization of the fundamental theorem for the Lebesgue Integral. This theorem gives an equivalent condition for a measurable function~$f:[a,b]\rightarrow \mathbb{R}$ and a continuous function~$F:[a,b]\rightarrow \mathbb{R}$ with~$F(a)=0$ to be such that~$f$ is Lebesgue integrable with~$F(x)=\int_a^x f$. In particular, the fundamental theorem says that this is equivalent to~$F$ being absolutely~continuous and for~$F^{\prime}$ to exist almost everywhere with~$F^{\prime}=f$ almost everywhere (cf. Theorem~\ref{Den:thm:FTCforLeb2prior} below). The Denjoy integral generalizes the Lebesgue integral via a generalization of absolute continuity. Let us then proceed by first recalling the definition of absolute continuity and then specifying Denjoy's generalization.

To this end, it will be convenient to introduce some notation, employed throughout the paper, for describing partitions and related notions. As will become clear, it will often be necessary to indicate that the edges of these partitions lie in some antecedently-specified closed subset of~$[a,b]$. Hence, given a closed subset~$K$ of~$[a,b]$,  {\it a~$K$-edged pre-partition~$\mathcal{D}$ of~$[a,b]$} is a finite non-empty collection~$J_{1}, \ldots, J_{n}$ of non-overlapping closed subintervals of~$[a,b]$ which have both their endpoints in~$K$. In this, two closed intervals $J=[c,d], J^{\prime}=[c^{\prime},d^{\prime}]$ are said to be \emph{non-overlapping} if either $d\leq c^{\prime}$ or $d^{\prime}\leq c$, so that sharing an endpoint is allowed and i.e. $J=[c,d]$ and $J^{\prime\prime}=[d,e]$ count as non-overlapping. A pre-partition~$\mathcal{D}$ of~$[a,b]$ is called a {\it partition} if its union is equal to the whole interval $[a,b]$. The length of a closed interval~$J$ will be denoted by its Lebesgue measure~$\mu(J)$. With this in place, we can now define the notion of absolute continuity and the generalization that is operative in the definition of the Denjoy integral:
\begin{defn}\label{Den:defn:abprior}
Let~$F:[a,b]\rightarrow \mathbb{R}$ be continuous and let~$K\subseteq [a,b]$. Then~$F$ is {\it absolutely continuous} on~$K$, and written~$F\in AC(K)$, if for every~$\epsilon>0$ there is~$\delta>0$ such that for all~$K$-edged pre-partitions~$\mathcal{D}$ of~$[a,b]$ 
\begin{equation}
\sum_{J\in \mathcal{D}} \mu(J)<\delta \Longrightarrow \sum_{J\in \mathcal{D}} \left| F(\max(J))-F(\min(J))\right|<\epsilon
\end{equation}
\end{defn}
\noindent The generalizations are obtained by relaxing the consequent of this conditional. One does this by introducing the notation
\begin{equation}
\omega(F,J)=\sup\{\left| F(x)-F(y)\right| : x,y\in J\}
\end{equation}
and then by defining:
\begin{defn}\label{Den:defn:ab2prior}
Let~$F:[a,b]\rightarrow \mathbb{R}$ be continuous and let~$K\subseteq [a,b]$.  Then~$F$ is {\it absolutely continuous in the restricted sense} on~$K$, and written~$F\in AC_{\ast}(K)$, if for every~$\epsilon>0$ there is~$\delta>0$ such that for all~$K$-edged pre-partitions~$\mathcal{D}$ of~$[a,b]$ if~$\sum_{J\in \mathcal{D}} \mu(J)<\delta$ then~$\sum_{J\in \mathcal{D}} \omega(F,J)<\epsilon$. Finally,~$F$ is {\it generalized absolutely continuous in the restricted sense}, and written~$F\in ACG_{\ast}(K)$, if there is a countable sequence of closed~$K_{n}\subseteq [a,b]$ such that~$K=\bigcup_{n} K_{n}$ and~$F\in AC_{\ast}(K_{n})$.\index{Generalized absolutely continuous in the restricted sense,~$ACG_{\ast}(K)$,  Definition~\ref{Den:defn:ab}~(iii)}
\end{defn}
\noindent Note that on this definition, all~$AC_{\ast}(K)$ and $ACG_{\ast}(K)$ functions are continuous. One could obviously define analogous notions for non-continuous functions. But since the functions which interest us are indefinite integrals which are automatically continuous, we maintain the convention in this paper that all $AC_{\ast}(K)$ and all~$ACG_{\ast}(K)$ functions are continuous. The Denjoy integral may then be defined as follows:
\begin{defn}\label{Den:defn:dennnprior} Suppose that~$f:[a,b]\rightarrow \mathbb{R}$. Then~$f$ is {\it Denjoy integrable} or~$f\in \mathrm{Den}[a,b]$ if there is~$F\in ACG_{\ast}([a,b])$ such that~$F^{\prime}$ exists almost everywhere and~$F^{\prime} = f$ almost everywhere. If in addition~$F(a)=0$, then one defines~$\int_a^x f = F(x)$.
\index{Denjoy integrable, Definition~\ref{Den:defn:dennn}} 
\end{defn}

The motivation for this definition comes from the parallel with the fundamental theorem for the Lebesgue integral, which in virtue of the above definitions we can state as follows:
\begin{thm}\label{Den:thm:FTCforLeb2prior} (Fundamental Theorem of Calculus for Lebesgue Integrals \cite{Folland1999} Theorem 3.35 p. 106). Suppose~$f:[a,b]\rightarrow \mathbb{R}$ is measurable and~$F:[a,b]\rightarrow \mathbb{R}$ is continuous with~$F(a)=0$. Then~$[f\in L^{1}[a,b] \; \& \; F(x)=\int_{a}^{x} f]$ iff~$[F\in~AC([a,b]) \; \& \; F^{\prime} = f$~a.e.]
\end{thm}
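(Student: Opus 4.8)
The plan is to prove the biconditional by establishing each direction separately, using the classical differentiation theory of Lebesgue.

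For the forward implication, suppose $f\in L^{1}[a,b]$ and $F(x)=\int_{a}^{x} f$. To see that $F\in AC([a,b])$, I would invoke the absolute continuity of the Lebesgue integral: for each $\epsilon>0$ there is $\delta>0$ such that $\int_{E}\left|f\right|<\epsilon$ whenever $E\subseteq[a,b]$ is measurable with $\mu(E)<\delta$. Given a pre-partition $\mathcal{D}$ with $\sum_{J\in\mathcal{D}}\mu(J)<\delta$, taking $E=\bigcup_{J\in\mathcal{D}} J$ and using $\left|F(\max(J))-F(\min(J))\right|=\left|\int_{J} f\right|\leq \int_{J}\left|f\right|$ yields $\sum_{J\in\mathcal{D}}\left|F(\max(J))-F(\min(J))\right|<\epsilon$, which is exactly the defining condition of Definition~\ref{Den:defn:abprior}. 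To see that $F^{\prime}=f$ almost everywhere, I would apply the Lebesgue differentiation theorem, which gives $F^{\prime}(x)=f(x)$ at every Lebesgue point of $f$ and hence almost everywhere.

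For the reverse implication, suppose $F\in AC([a,b])$ and $F^{\prime}=f$ almost everywhere. I would argue in three steps. First, since every absolutely continuous function is of bounded variation, it is a difference of two monotone functions, and so by Lebesgue's theorem on the differentiability of monotone functions $F^{\prime}$ exists almost everywhere and is integrable; thus $f=F^{\prime}\in L^{1}[a,b]$. Second, I would prove the key lemma that an absolutely continuous function whose derivative vanishes almost everywhere is constant. Third, setting $G(x)=\int_{a}^{x} f$, the forward implication already shows that $G\in AC([a,b])$ with $G^{\prime}=f$ almost everywhere; hence $F-G$ is absolutely continuous with $(F-G)^{\prime}=0$ almost everywhere, so by the lemma $F-G$ is constant, and evaluating at $a$ with $F(a)=0=G(a)$ gives $F(x)=G(x)=\int_{a}^{x} f$.

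The main obstacle is the no-singular-part lemma in the second step, the place where absolute continuity genuinely does its work by excluding pathologies of Cantor-function type. This lemma, together with the underlying fact that monotone functions are differentiable almost everywhere, rests on the Vitali covering lemma, and it is the careful covering argument --- rather than the essentially bookkeeping manipulations in the two endpoint reductions above --- that constitutes the technical heart of the proof.
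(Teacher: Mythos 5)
Your proposal is correct: both directions are complete, and the decomposition (absolute continuity of the integral plus the Lebesgue differentiation theorem for the forward direction; BV-differentiability, the ``vanishing derivative a.e.\ implies constant'' lemma, and comparison with $G(x)=\int_a^x f$ for the converse) is a sound and standard argument. Note, however, that the paper offers no proof of this statement at all --- it is quoted as a known classical result with a citation to Folland's Theorem~3.35 --- so there is no in-paper argument to compare against. It is worth observing that your route is the classical Vitali-covering one (in the style of Royden or Riesz--Nagy), whereas Folland's cited proof proceeds quite differently: he identifies $NBV$ functions with finite Borel measures and derives the theorem from the Lebesgue--Radon--Nikodym theorem together with the differentiation theory of regular measures, so that the ``no singular part'' step you isolate as the technical heart is absorbed into the decomposition $\mu_F=\mu_{ac}+\mu_{sing}$ and the fact that a singular measure has vanishing density almost everywhere. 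Your approach buys self-containedness at the level of covering lemmas; Folland's buys brevity given the measure-theoretic machinery. Either is a legitimate proof of the statement as used in the paper.
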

\noindent Here we use the standard notation~$L^{1}[a,b]$ for the space of real-valued Lebesgue integrable functions on~$[a,b]$, and the standard abbreviation a.e. for almost everywhere equivalence. It turns out that all Denjoy integrable functions~$f:[a,b]\rightarrow \mathbb{R}$ are Lebesgue measurable (cf. \cite{Gordon1994} Theorem 7.6 p. 109). From this and the definition of the  Denjoy integral (Definition~\ref{Den:defn:dennnprior}) we can immediately deduce the following analogue of Theorem~\ref{Den:thm:FTCforLeb2prior}:
\begin{thm}\label{Den:thm:FTCforDenprior} Suppose~$f:[a,b]\rightarrow \mathbb{R}$ is measurable and~$F:[a,b]\rightarrow \mathbb{R}$ is continuous with~$F(a)=0$. Then~$[f\in \mathrm{Den}[a,b] \; \& \; F(x)=\int_{a}^{x} f]$ iff \mbox{}$[F\in ACG_{\ast}([a,b]) \; \& \; F^{\prime} = f$~a.e.]
\end{thm}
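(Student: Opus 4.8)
The plan is to derive Theorem~\ref{Den:thm:FTCforDenprior} directly from the definition of the Denjoy integral (Definition~\ref{Den:defn:dennnprior}) together with the cited fact that every Denjoy integrable function is Lebesgue measurable. Since both directions concern the same biconditional, I would prove each implication in turn, and I expect the entire argument to be essentially a matter of unwinding definitions, since the real analytic content has already been packaged into Definition~\ref{Den:defn:dennnprior}.

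For the forward direction, suppose $f\in\mathrm{Den}[a,b]$ and $F(x)=\int_a^x f$. By the definition of the Denjoy integral, there is some $G\in ACG_\ast([a,b])$ with $G^\prime$ existing almost everywhere and $G^\prime=f$ almost everywhere; and the clause defining $\int_a^x f$ requires that we take such a $G$ with $G(a)=0$, whence $F=G$. Thus $F\in ACG_\ast([a,b])$ and $F^\prime=f$ almost everywhere, which is exactly the right-hand side. The only point demanding care here is that the antiderivative witnessing Denjoy integrability is the normalized one (with value $0$ at $a$) that is used to define the indefinite integral, so that the $F$ in the hypothesis genuinely coincides with the witness; this is immediate from the final sentence of Definition~\ref{Den:defn:dennnprior}.

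For the reverse direction, suppose $F\in ACG_\ast([a,b])$ with $F^\prime=f$ almost everywhere. Then $F$ is itself a witness to the Denjoy integrability of $f$ in the sense of Definition~\ref{Den:defn:dennnprior}, so $f\in\mathrm{Den}[a,b]$. Moreover, since we are given $F(a)=0$ in the hypothesis of the theorem, the defining clause yields $\int_a^x f=F(x)$, which is the remaining conjunct on the left-hand side. I would note in passing that $f$ is Lebesgue measurable by the cited result (\cite{Gordon1994}, Theorem 7.6), so there is no tension with the measurability hypothesis, although the theorem already assumes $f$ measurable at the outset.

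The claim in the excerpt that this theorem follows ``immediately'' is therefore accurate: there is no genuine obstacle, and the only subtlety—if one can call it that—is the bookkeeping around the normalization $F(a)=0$, which ensures that the abstract antiderivative appearing in Definition~\ref{Den:defn:dennnprior} is exactly the indefinite integral and not merely an antiderivative differing from it by a constant. Once this is observed, both implications are formal consequences of the definitions, paralleling how Theorem~\ref{Den:thm:FTCforLeb2prior} characterizes the indefinite Lebesgue integral via absolute continuity.
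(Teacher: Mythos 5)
Your proposal is correct and follows essentially the same route as the paper, which likewise deduces Theorem~\ref{Den:thm:FTCforDenprior} ``immediately'' from Definition~\ref{Den:defn:dennnprior} together with the cited measurability fact (Gordon, Theorem 7.6). The paper gives no further detail, so your unwinding of the two directions, with attention to the normalization $F(a)=0$ identifying the hypothesized $F$ with the $ACG_{\ast}$ witness, is exactly the intended argument.
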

\noindent Since any function in~$ACG_{\ast}([a,b])$ is differentiable a.e. (cf. \S\ref{Den:Den01.2}), this theorem says that the Denjoy integrable functions are, up to almost everywhere equivalence, exactly the derivatives of~$ACG_{\ast}([a,b])$ functions. The analogy between Theorem~\ref{Den:thm:FTCforLeb2prior}  and Theorem~\ref{Den:thm:FTCforDenprior} thus becomes all the more apparent when one observes that~$F\in AC(K)$ iff~$F\in AC_{\ast}(K)$ in the specific case where~$K$ is a closed interval. For, Theorem~\ref{Den:thm:FTCforLeb2prior}  then says that the Lebesgue integrable functions are, up to almost everywhere equivalence, exactly the derivatives of~$AC_{\ast}([a,b])$ functions.

At the end of their study of the Denjoy integral, Dougherty~and~Kechris posed the following question about the generalization of absolute continuity which features in Theorem~\ref{Den:thm:FTCforDenprior}:
\begin{quote}
A second problem is related to the definability aspects of the so-called ``descriptive definitions of integrals'' (see [S, Chapts VII, VIII] [\cite{Saks1937}]). These are essentially implicit definitions like the original one of the primitive. For example, the Lebesgue integral~$F$ of an integrable function~$f$ can be defined as the unique (up to a constant)~$F$ such that (i)~$F$ is absolutely continuous and (ii)~$F^{\prime} =f(x)$ for almost all~$x$. By replacing in (i) absolute continuity by more general conditions, one can obtain descriptive definitions of integrals involving any derivative. The question is whether these conditions can possibly be Borel (\cite{DoughertyKechris1991}~p.~166, cf. \cite{Kechris1987} p. 312).
\end{quote}
In the beginning of this quotation, Dougherty and Kechris refer to Saks' 1937 book \emph{The Theory of the Integral.} And \S{VIII.1} of Saks' book is called ``the descriptive definition of the Denjoy integral'' and contains Definition~\ref{Den:defn:dennnprior} (\cite{Saks1937} p. 241). It seems then that Dougherty~and~Kechris are asking about the descriptive set-theory complexity of the set~$ACG_{\ast}[a,b]$. This paper answers this question by showing that the condition~$ACG_{\ast}[a,b]$ cannot be Borel:
\begin{thm}\label{Den:cor:main2prior} The set~$ACG_{\ast}([a,b])$ is coanalytic but not Borel in~$C[a,b]$.
\end{thm}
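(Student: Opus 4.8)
The plan is to establish the two halves separately: that $ACG_{\ast}([a,b])$ is coanalytic, and that it is not Borel. For the upper bound the obstacle is that the definition (Definition~\ref{Den:defn:ab2prior}) reads as an \emph{existential} quantifier over countable closed covers, which is prima facie~$\mathbf{\Sigma}^1_1$. The key step will be to replace this by a Baire-category reformulation: I claim that for continuous~$F$ one has~$F\in ACG_{\ast}([a,b])$ if and only if for every nonempty closed~$P\subseteq[a,b]$ there is an open interval~$I$ with rational endpoints meeting~$P$ such that~$F\in AC_{\ast}(P\cap\overline{I})$. The forward direction follows by intersecting a witnessing cover~$[a,b]=\bigcup_n K_n$ with~$P$ and applying the Baire category theorem in the Polish space~$P$ to find some~$P\cap K_n$ that is nonmeager in~$P$, hence contains a portion, using that~$AC_{\ast}$ passes to closed subsets. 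The reverse direction runs a Cantor--Bendixson-style derivative: setting~$P_0=[a,b]$ and removing at each stage the union~$U_\alpha$ of all rational portions on which~$F$ is~$AC_{\ast}$, the transfinite sequence of closed sets~$P_\alpha$ is strictly decreasing until empty, which must happen at a countable ordinal, and the removed portions furnish the required countable closed cover.

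Granting this reformulation, coanalyticity follows once I check that the relation~$F\in AC_{\ast}(K)$ is Borel in the pair~$(F,K)\in C[a,b]\times\mathcal{F}([a,b])$, where~$\mathcal{F}([a,b])$ carries the Effros Borel structure. Using Kuratowski--Ryll-Nardzewski Borel selectors to extract a sequence dense in~$K$, and using the continuity of~$F$ to restrict the pre-partitions of Definition~\ref{Den:defn:ab2prior} to endpoints drawn from that sequence, the defining condition becomes a countable combination (over rational~$\epsilon,\delta$ and finite rational-indexed pre-partitions) of the Borel quantities~$\mu(J)$ and~$\omega(F,J)$, hence Borel. The reformulation then presents~$ACG_{\ast}([a,b])$ as~$\{F:\forall P\,\varphi(F,P)\}$ with~$P$ ranging over a standard Borel space of closed sets and~$\varphi$ Borel, which is the normal form of a~$\mathbf{\Pi}^1_1$ set; equivalently, the length of the derivative process above is a~$\mathbf{\Pi}^1_1$-rank.

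For the lower bound I will exhibit a continuous reduction~$C\mapsto F_C$ of the~$\mathbf{\Pi}^1_1$-complete set of countable closed subsets of a fixed Cantor set~$\mathcal{C}\subseteq[a,b]$ into~$ACG_{\ast}([a,b])$, so that~$C$ is countable iff~$F_C\in ACG_{\ast}([a,b])$; Borelness of~$ACG_{\ast}([a,b])$ would then make the countable closed sets Borel, which is false. The construction places, in the complementary gaps of~$\mathcal{C}$, oscillatory tent-atoms whose heights decay away from~$C$, calibrated so that each atom is~$AC_{\ast}$ in isolation while the atoms clustering around a condensation point of~$C$ accumulate non-$\sigma$-finite oscillation (failure of bounded variation in the restricted sense). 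The map is continuous because any finite approximation of~$C$ in the Vietoris topology determines~$F_C$ up to a uniformly small supremum error.

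It remains to verify the two implications. When~$C$ is countable, its Cantor--Bendixson analysis is a transfinite peeling of isolated points, and around any isolated point of~$C$ the oscillation is a countable—hence coverable—phenomenon; an induction on the Cantor--Bendixson rank then produces a countable closed cover witnessing~$F_C\in ACG_{\ast}([a,b])$. When~$C$ is uncountable it has a nonempty perfect kernel~$P$, and the calibration forces, on every portion~$P\cap\overline{I}$, sums~$\sum_{J}\omega(F_C,J)$ that remain large for pre-partitions of arbitrarily small total length, so~$F_C$ is~$AC_{\ast}$ on no portion of~$P$; by the reformulation,~$F_C\notin ACG_{\ast}([a,b])$. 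I expect this last verification to be the main obstacle: a single bad point is harmless for~$ACG_{\ast}$, since its local oscillation can be distributed among countably many closed pieces, so the heights and gap-lengths must be tuned so that badness survives passage to \emph{every} portion of a perfect set while each atom stays individually~$AC_{\ast}$ and~$C\mapsto F_C$ stays continuous—the delicate quantitative heart of the argument.
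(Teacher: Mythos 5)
Your coanalyticity half is correct and is, in substance, the paper's own argument. Your portion-wise criterion (``for every nonempty closed $P$ there is a rational portion $P\cap\overline{I}$ with $F\in AC_{\ast}(P\cap\overline{I})$'') is exactly the statement that the derivative $D_F$ of (\ref{eqn:mymyd2}) has no nonempty fixed point below $[a,b]$, your transfinite peeling $P_{\alpha+1}=P_{\alpha}\setminus U_{\alpha}$ is literally the iteration $D^{\alpha}_F([a,b])$, and the equivalence with the existence of a countable closed cover is the general fact $D^{\infty}_{\mathcal{B}_F}([a,b])=\emptyset \iff [a,b]\in(\mathcal{B}_F)_{\sigma}$ from Kechris \S{34.D} that the paper invokes. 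Your Borelness check for the relation $F\in AC_{\ast}(K)$ via Kuratowski--Ryll-Nardzewski selectors and dense sequences is Proposition~\ref{Den:prop:prop129a} almost verbatim, and your closing remark that the length of the process is a $\mathbf{\Pi}^1_1$-rank is Proposition~\ref{previoustheorem}.

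The non-Borel half, however, has a genuine gap, and it is precisely the step you flag as ``the delicate quantitative heart.'' Your plan --- a continuous reduction $C\mapsto F_C$ from the $\mathbf{\Pi}^1_1$-complete set of countable closed subsets of a Cantor set --- is a proof of coanalytic \emph{completeness} of $ACG_{\ast}([a,b])$, which is strictly stronger than the theorem and is posed by the paper as an open problem (Question~\ref{eqn:q1}). So the deferred calibration is not a routine verification but the entire open content. The internal tension is concrete: if atom heights decay with distance to $C$, then the gaps of $\mathcal{C}$ accumulating at an \emph{isolated} point $x$ of $C$ have distance to $C$ tending to $0$, so their heights do not decay there, and any closed $K\subseteq\mathcal{C}$ clustering at $x$ admits $K$-edged pre-partitions of arbitrarily small total length straddling those atoms; unless the local height-sums around every point of $C$ are made summable (after which you must re-argue why they fail to be summable on every portion of a perfect kernel), the countable case of your dichotomy breaks. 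The paper sidesteps completeness entirely: it combines the coanalytic rank $\left|F\right|$ from Proposition~\ref{previoustheorem} with the boundedness theorem for coanalytic ranks (Kechris Theorem 35.23), so that non-Borelness reduces to exhibiting functions in $ACG_{\ast}([a,b])$ of arbitrarily high rank. That is Theorem~\ref{Den:exfactor}, itself a nontrivial transfinite construction (note the carefully split weights $2^{-n}$ and $2^{-n}2^{-m+2^{n}-1}$ in the successor step, and the use of the Improper Integrals Lemma~\ref{Den:Den03:lem:imp} at limits). Since your proposal supplies neither the reduction's calibration nor such a rank-unboundedness construction, the lower bound is unproven on either route; the repair consistent with the paper is to keep your reformulation as the rank and prove an analogue of Theorem~\ref{Den:exfactor}, then conclude by boundedness.
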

\noindent The proof of this theorem occurs at the close of \S\ref{Den:Den02.3}. After Dougherty~and~Kechris pose this question about~$ACG_{\ast}[a,b]$, they pose another question about whether there is a uniform Borel method of recovering the integral of a function which one antecedently knows to be integrable. 

Before stating this other question precisely (cf. Question~\ref{qkd1} below), let us briefly summarize Dougherty~and Kechris' own results. These are reported in their joint paper (\cite{DoughertyKechris1991}) as well as the paper associated to Kechris' 1986 ICM talk (\cite{Kechris1987}). A distinctive feature of Dougherty~and~Kechris' work is that it restricts attention to the action of the Denjoy integral on the derivatives of everywhere differentiable functions. For, if~$F:[a,b]\rightarrow \mathbb{R}$ with~$F(a)=0$ is everywhere differentiable, then~$F$ is~$ACG^{\ast}[a,b]$ (cf. \cite{Gordon1994} Theorem 7.2 p. 108). Then Theorem~\ref{Den:thm:FTCforDenprior} implies that~$f=F^{\prime}$ is Denjoy integrable with indefinite integral~$\int_a^x f = F(x)$. Now, it is a classical result, due to Mazurkiewicz, that~$\mathrm{Diff}[a,b]$, the set of everywhere differentiable real-valued functions on~$[a,b]$, is a coanalytic~complete subset of~$C[a,b]$ (cf. Kechris \cite{Kechris1995}~\S33.D Theorem 33.9 p. 248). The topic which Dougherty~and~Kechris pursued was the complexity of the associated set of derivatives $\Delta = \{F^{\prime} : F\in \mathrm{Diff}[a,b]\}$. 

To make this question precise, one must find some Polish space in which~$\Delta$ or a related set can naturally be viewed as a subspace. Dougherty~and~Kechris opted to work in the Polish space~$(C[a,b])^{\omega}$, namely the countable product space of~$C([a,b])$. Inside this space, they focused attention on the sets
\begin{eqnarray}
CN & = &  \{\{f_n\}_{n=1}^{\infty} \in (C[a,b])^{\omega}: \forall \; x \; \lim_n f_n(x) \mbox{ exists }\} \label{eqn:CNdefn}\\
\overline{\Delta} & = & \{ \{f_n\}_{n=1}^{\infty}\in CN: \lim_n f_n \in \Delta\} \label{eqn:overlinedelta}
\end{eqnarray}
If~$F\in \mathrm{Diff}[a,b]$, then of course~$F^{\prime}=\lim_n f_n$ where~$f_n(x)=[n \cdot (F(x)-F(x+\frac{1}{n}))]$, so that each derivative~$F^{\prime}$ might naturally be viewed as coded by the sequence~$f_n$. That is, we map~$\mathrm{Diff}[a,b]$ into~$\overline{\Delta}$ by~$F\mapsto \delta(F)=\{f_n\}_{n=1}^{\infty}$ where~$f_n(x)=[n \cdot (F(x)-F(x+\frac{1}{n}))]$. Now, it turns out that the set~$CN$ is complete coanalytic (cf. Kechris \cite{Kechris1995} \S33.E pp. 251~ff), so in looking at the complexity of subsets of~$CN$ such as~$\overline{\Delta}$, one should ask about how complex it is to be in~$\overline{\Delta}$ {\it given} that one is in~$CN$. One of Dougherty~and~Kechris's results states that~$\overline{\Delta}$ is coanalytic but there is no analytic set~$S$ in~$(C[a,b])^{\omega}$ such that for all~$\{f_n\}_{n=1}^{\infty}\in CN$, one has~$\{f_n\}_{n=1}^{\infty}\in S$ iff~$\{f_n\}_{n=1}^{\infty} \in \overline{\Delta}$ (cf. Theorem 2 \cite{DoughertyKechris1991} p. 147). This result tells us that~$\{f_n\}_{n=1}^{\infty}$ being in~$\overline{\Delta}$ given that it is already in~$CN$ is a coanalytic but not Borel notion.

This suggests another question closely related to Dougherty~and~Kechris's question about the complexity of~$ACG_{\ast}[a,b]$. For, there are many functions~$F$ in~$ACG_{\ast}[a,b]$ which are not in~$\mathrm{Diff}[a,b]$, and since Dougherty and Kechris focused on the image of~$\mathrm{Diff}[a,b]$ under the differentiation operation, their results in general would not have any implications for the larger set~$ACG_{\ast}[a,b]$. However, it's thus natural to study the complexity of the image of~$ACG_{\ast}[a,b]$ under the operation of {\it almost everywhere} differentiation. To make this question precise, one must specify a Polish space which naturally contains this image. Here we look at the Polish space~$M[a,b]$ of real-valued measurable functions on~$[a,b]$, modulo almost everywhere equivalence. The topology is defined so that~$f_n\rightarrow f$ in this space iff~$f_n\rightarrow f$ in~measure. At the outset of \S\ref{Den:Den02.3} we review the Polish space structure of $M[a,b]$ in more detail. But having specified this Polish space, we can now state our second main result, which is proven at the close of \S\ref{Den:Den02.3}:
\begin{thm}\label{thm:mynewthing}
The set~$\mathrm{Den}[a,b]$ of Denjoy integrable functions is a~${\bf \Sigma^1_2}$-subset of the Polish space~$M[a,b]$ and is not analytic.
\end{thm}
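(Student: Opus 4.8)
The plan is to treat the two assertions separately: the ${\bf \Sigma^1_2}$ upper bound by a quantifier count built on Theorem~\ref{Den:cor:main2prior}, and the failure of analyticity by a reduction from a hard coanalytic set. The common tool is a \emph{Borel} version of almost-everywhere differentiation. Extending each $F\in C[a,b]$ to $[a,b+1]$ by the constant value $F(b)$ and setting $(D_nF)(x)=n\bigl(F(x+\tfrac1n)-F(x)\bigr)$, each $D_nF$ is continuous and the map $F\mapsto D_nF$ is continuous from $C[a,b]$ into $M[a,b]$, since uniform convergence implies convergence in measure. For any $F$ differentiable almost everywhere---in particular any $F\in ACG_\ast([a,b])$, by the remark following Theorem~\ref{Den:thm:FTCforDenprior}---one has $(D_nF)(x)\to F'(x)$ at every point of differentiability in $[a,b)$ (for fixed such $x$ the difference quotient uses genuine values of $F$ once $n$ is large), hence $D_nF\to F'$ almost everywhere and therefore, on the finite measure space $[a,b]$, in measure. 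Consequently $E=\{F:\lim_n D_nF\text{ exists in }M[a,b]\}$ is a Borel set and the limit defines a Borel map $D\colon E\to M[a,b]$ with $D(F)=F'$ for $F\in ACG_\ast([a,b])\subseteq E$.

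With this in hand, Theorem~\ref{Den:thm:FTCforDenprior} yields the reformulation
\begin{equation*}
f\in \mathrm{Den}[a,b]\iff \exists\,F\in C[a,b]\;\bigl[\,F\in ACG_\ast([a,b])\ \wedge\ \lim_{n} D_nF=f\text{ in }M[a,b]\,\bigr].
\end{equation*}
Indeed, any witness $F$ may be taken to have $F(a)=0$, and for such $F$ the right-hand side asserts exactly that $F$ is an $ACG_\ast$ primitive with $F'=f$ a.e. The matrix is a conjunction of a ${\bf \Pi^1_1}$ condition (membership in $ACG_\ast([a,b])$, by Theorem~\ref{Den:cor:main2prior}) and a Borel condition (convergence $D_nF\to f$ in the Polish metric of $M[a,b]$, which is Borel in $(F,f)$ because each $F\mapsto D_nF$ is continuous and convergence to a point in a Polish space is an arithmetical condition on the distances), hence it is ${\bf \Pi^1_1}$. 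Existentially quantifying over the Polish space $C[a,b]$ then gives a ${\bf \Sigma^1_2}$ definition, settling the first half.

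For non-analyticity I would argue by reduction rather than by producing a coanalytic upper bound: the differentiation map forgets singular parts (the Cantor function lies off $ACG_\ast([a,b])$ yet has derivative $0\in\mathrm{Den}[a,b]$), so $\mathrm{Den}[a,b]$ should not be expected to be coanalytic. Instead I would reuse the construction establishing Theorem~\ref{Den:cor:main2prior}, which supplies a Borel assignment $\alpha\mapsto F_\alpha\in C[a,b]$ from a ${\bf \Pi^1_1}$-complete set $A$ with $F_\alpha\in ACG_\ast([a,b])\iff\alpha\in A$ (only a ${\bf \Pi^1_1}$ non-Borel, hence non-analytic, set is needed). Composing with $D$ gives a Borel map $\alpha\mapsto D(F_\alpha)$, and the aim is to arrange the construction so that
\begin{equation*}
D(F_\alpha)\in \mathrm{Den}[a,b]\iff F_\alpha\in ACG_\ast([a,b])\iff \alpha\in A .
\end{equation*}
Granting this, $A=\{\alpha:D(F_\alpha)\in\mathrm{Den}[a,b]\}$ is a Borel preimage of $\mathrm{Den}[a,b]$; were $\mathrm{Den}[a,b]$ analytic, $A$ would be analytic, contradicting that a ${\bf \Pi^1_1}$-complete set is not analytic.

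The hard part is the first biconditional. Membership $D(F_\alpha)\in\mathrm{Den}[a,b]$ asserts only that \emph{some} $ACG_\ast$ primitive of $F_\alpha'$ exists, not that $F_\alpha$ is one, so the singular-part ambiguity must be excluded by design. The plan is to choose the witnesses with no singular ambiguity---arranging each $F_\alpha$ to be, whenever its a.e.-derivative is integrable, the indefinite Denjoy integral of that derivative, so that any $ACG_\ast$ primitive of $F_\alpha'$ equals $F_\alpha$ up to a constant and integrability of $F_\alpha'$ returns $F_\alpha\in ACG_\ast([a,b])$. Equivalently, one builds the reduction at the level of the integrand, defining a Borel $\alpha\mapsto f_\alpha\in M[a,b]$ with $F_\alpha=\int_a^x f_\alpha$ and verifying directly that $f_\alpha\in\mathrm{Den}[a,b]\iff\alpha\in A$, so that the same tree/nested-interval construction simultaneously witnesses Theorem~\ref{Den:cor:main2prior} and the non-analyticity of $\mathrm{Den}[a,b]$. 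Checking that the negative instances yield genuinely non-Denjoy-integrable $f_\alpha$---rather than merely functions with no single obvious primitive---is where the analytic content of the argument is concentrated.
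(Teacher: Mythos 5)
Your ${\bf \Sigma^1_2}$ half is correct and is essentially the paper's own argument in different clothing: the paper writes $\mathrm{Den}[a,b]$ as the image of the coanalytic set $ACG_{\ast}([a,b])$ under a Borel a.e.-differentiation map $\gamma$ (whose Borelness is Proposition~\ref{den:imaprop}), while you implement the differentiation operator concretely as the in-measure limit of the difference quotients $D_nF$ and then display the explicit definition $\exists F\in C[a,b]\,[{\bf \Pi^1_1}\wedge \mathrm{Borel}]$. These are interchangeable, and your verifications (pointwise a.e.\ convergence of $D_nF$ at points of differentiability, hence convergence in measure on a finite measure space; Borelness of the convergence set $E$ and of the limit map) are sound.

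The negative half, however, has a genuine gap, and it sits exactly where you admit the ``analytic content'' is concentrated. Your plan needs a Borel assignment $\alpha\mapsto f_\alpha$ from a non-analytic ${\bf \Pi^1_1}$ set $A$ with $f_\alpha\in\mathrm{Den}[a,b]\iff\alpha\in A$, and you propose to extract it from ``the construction establishing Theorem~\ref{Den:cor:main2prior}.'' But the paper's proof of Theorem~\ref{Den:cor:main2prior} contains no such reduction: non-Borelness is obtained softly, from the boundedness theorem for the coanalytic rank $\left|F\right|$ (Kechris, Theorem~35.23) together with Theorem~\ref{Den:exfactor}, which builds \emph{individual} functions of arbitrarily high rank by transfinite recursion---not a Borel-uniform family indexed by codes; indeed, whether these sets are ${\bf \Pi^1_1}$-complete is explicitly left open in the paper (Question~\ref{eqn:q1}). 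So the reduction would have to be built from scratch, and, as you note yourself, one must additionally arrange that $\alpha\notin A$ yields a genuinely non-Denjoy-integrable $f_\alpha$: having $F_\alpha\notin ACG_{\ast}([a,b])$ does not suffice, since the failure could be of singular (Cantor-function) type while the a.e.-derivative remains integrable. None of this is carried out, so what you have for non-analyticity is a program, not a proof.

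For comparison, the paper's negative argument is two lines: if $\mathrm{Den}[a,b]$ were analytic then, by Proposition~\ref{den:imaprop} and the biconditional ``$F\in ACG_{\ast}([a,b])$ iff there is $f\in\mathrm{Den}[a,b]$ with $F'=f$ a.e.''\ (attributed to Theorem~\ref{Den:thm:FTCforDenprior}), $ACG_{\ast}([a,b])$ would be analytic and hence, being coanalytic, Borel, contradicting Theorem~\ref{Den:cor:main2prior}. Your Cantor-function remark actually targets the right-to-left direction of exactly this biconditional (the Cantor function satisfies the right-hand side with $f=0$ but is not $ACG_{\ast}$), and Theorem~\ref{Den:thm:FTCforDenprior} delivers that direction only under the extra hypothesis $F(x)=\int_a^x f$. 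So your instinct that $ACG_{\ast}$ membership cannot be recovered from the a.e.-derivative alone is correct, and it flags a step in the paper's own proof that, as literally stated, needs repair; but your substitute replaces a short (if delicate) soft deduction with an unconstructed hard reduction, and without that construction the non-analyticity claim is not established.
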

\noindent Recall that a~${\bf \Sigma^1_2}$-subset is the continuous (or Borel) image of a coanalytic set, and it is a basic part of the classical theory that all analytic and coanalytic sets are~${\bf \Sigma^1_2}$, but not vice-versa. In the statement of this theorem, we regard elements of~$\mathrm{Den}[a,b]$ as real-valued functions on~$[a,b]$ modulo almost everywhere equivalence. Dougherty~and~Kechris' result described at the end of the previous paragraph essentially said that the image of~$\mathrm{Diff}[a,b]$ under the operation of differentiation was coanalytic but not Borel within the Polish space~$(C[a,b])^{\omega}$. Similarly, Theorem~\ref{thm:mynewthing} implies that the image of~$ACG_{\ast}[a,b]$ under the operation of  almost~everywhere~differentiation is~${\bf \Sigma^1_2}$ but not Borel.

After posing their question about~$ACG_{\ast}[a,b]$, Dougherty~and~Kechris pose a question about how hard it is to recover the integral of a function which one antecedently knows to be integrable. In their setting, the form this question took was the following: 
\begin{Q}\label{qkd1}
Is there a Borel set~$B\subseteq (C[a,b])^{\omega} \times C[a,b]$ such that for all~$\{f_n\}_{n=1}^{\infty}\in \overline{\Delta}$ and all~$F\in C[a,b]$, one has that~$(\{f_n\}_{n=1}^{\infty}, F)\in B$ iff~$F^{\prime}(x)=\lim_n f_n(x)$ for all~$x\in [a,b]$? (\cite{DoughertyKechris1991} p. 166, \cite{Kechris1987} p. 312).
\end{Q}
\noindent A negative resolution of this question would generalize Dougherty~and~Kechris result that there is no Borel set~$B\subseteq (C[a,b])^{\omega}~$ such that for all~$\{f_n\}_{n=1}^{\infty}\in \overline{\Delta}$, one has that~$\{f_n\}_{n=1}^{\infty}\in B$ iff~$\int_a^b \lim_n f_n(x) \; dx>0$ (cf. \cite{DoughertyKechris1991} Theorem 4 p. 147). The analogous question in our setting would be the following, or perhaps variations on it wherein~$\mathrm{Den}[a,b]$ is replaced by various of its subsets:
\begin{Q}\label{qkd2}
Is there a Borel set~$B\subseteq M[a,b]\times C[a,b]$ such that for all~$f\in \mathrm{Den}[a,b]$ and all~$F\in C[a,b]$ with~$F(a)=0$, one has that~$(f, F)\in B$ iff~$\int_a^x f = F(x)$ for all~$x\in [a,b]$?
\end{Q}
\noindent We have been unable to answer Questions~\ref{qkd1}-\ref{qkd2}. 

However, a crucial part of our proofs of Theorem~\ref{Den:cor:main2prior} and Theorem~\ref{thm:mynewthing}  revolves around the related issue of identifying the complexity of the graph of the indefinite Denjoy integral. Here we establish the following result, whose proof occurs at the close of \S\ref{Den:Den02.3}:
\begin{thm}\label{thm:mainnew5}
The graph of the indefinite Denjoy integral~$f\mapsto \int_a^x f$, viewed as a subset of the product space~$M[a,b]\times C[a,b]$, is coanalytic but not Borel.
\end{thm}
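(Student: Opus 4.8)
The plan is to establish the two halves of the statement separately: coanalyticity (the upper bound) and non-Borelness (the lower bound). For the upper bound, I would unwind the definitions. By Theorem~\ref{Den:thm:FTCforDenprior}, the pair $(f,F)$ lies on the graph of the indefinite Denjoy integral precisely when $F(a)=0$, $F\in ACG_{\ast}([a,b])$, and $F'=f$ almost everywhere. The conditions $F(a)=0$ and ``$F'=f$ a.e.'' should each be Borel (indeed the latter can be phrased as an almost-everywhere convergence of the difference quotients $n(F(x+\tfrac1n)-F(x))$ to $f(x)$, which is an arithmetic condition over a countable dense set of rationals and hence Borel in the product space $M[a,b]\times C[a,b]$). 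So the real content of the upper bound is showing that $\{F\in C[a,b]: F\in ACG_{\ast}([a,b])\}$ is coanalytic. I would do this by writing out Definition~\ref{Den:defn:ab2prior} with explicit quantifiers: $F\in ACG_{\ast}([a,b])$ asserts that \emph{there exists} a countable sequence of closed sets $K_n$ covering $[a,b]$ with $F\in AC_{\ast}(K_n)$ for each $n$. The existential quantifier over a sequence of closed subsets of $[a,b]$ is an existential quantifier over a Polish space (the hyperspace of closed subsets), so prima facie this is $\mathbf{\Sigma}^1_1$; the trick is to replace the quantifier over arbitrary closed sets by a quantifier over a \emph{countable} parametrization, which collapses the apparent analyticity down to coanalyticity.

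**The key technical step** is to verify that $AC_{\ast}(K)$, as a relation in the two variables $F$ and $K$ (the latter ranging over the hyperspace $F(2^{[a,b]})$ of closed sets), is Borel. Once that is in hand, I would observe that one may restrict the witnessing sets $K_n$ to a fixed countable family without loss of generality — for instance, one can take the $K_n$ to be finite unions of closed intervals with rational endpoints intersected with a suitable ``perfect kernel'' type set, or more robustly, use the standard device of expressing the $\mathbf{\Pi}^1_1$ character through a reduction to the well-foundedness of an associated tree. Concretely, I would try to show that $F\in ACG_{\ast}([a,b])$ is equivalent to a statement of the form ``every point $x$ has a neighborhood basis witness,'' allowing the existential-over-sequences to be pushed inside a Borel matrix in a way that yields the $\mathbf{\Pi}^1_1$ form. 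The cleanest route is likely to show directly that the complement is analytic: $F\notin ACG_{\ast}$ iff for \emph{no} countable closed cover do all the pieces satisfy $AC_{\ast}$, and to massage this into $\mathbf{\Sigma}^1_1$ form using the Borelness of $AC_{\ast}$ together with a uniformization or a tree-rank argument.

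**The non-Borel lower bound** is where I expect the main obstacle, and the natural strategy is a Borel reduction from a known coanalytic-complete (or at least non-Borel) set. The excerpt itself points to the right source: by Theorem~\ref{Den:cor:main2prior} the set $ACG_{\ast}([a,b])$ is coanalytic non-Borel in $C[a,b]$ — but since Theorem~\ref{Den:cor:main2prior}'s proof is described as occurring at the same place, I would instead aim to reduce from Mazurkiewicz's theorem that $\mathrm{Diff}[a,b]$ is coanalytic-complete in $C[a,b]$. The idea is to map a continuous function $G$ (a candidate for differentiability) to a pair $(f_G, F_G)$ in $M[a,b]\times C[a,b]$ in such a way that $(f_G,F_G)$ lies on the graph of the indefinite integral if and only if $G\in\mathrm{Diff}[a,b]$. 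Using the fact (cited in the excerpt) that every everywhere-differentiable $G$ with $G(a)=0$ is $ACG_{\ast}$ and hence its derivative is Denjoy integrable with $\int_a^x G' = G(x)$, I would set $F_G = G - G(a)$ and let $f_G$ be the a.e.-defined derivative (e.g.\ a pointwise limsup of difference quotients, which is always measurable). The reduction must be continuous (or Borel) into the product space, and one must check both directions: if $G$ is everywhere differentiable the pair lands on the graph, and conversely if the pair is on the graph one must recover everywhere differentiability rather than merely almost-everywhere differentiability.

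**The hard part will be** this converse direction, because the graph of the Denjoy integral only sees $F'=f$ \emph{almost everywhere}, whereas $\mathrm{Diff}[a,b]$ demands differentiability \emph{everywhere}. A bare reduction $G\mapsto (f_G, F_G)$ therefore risks placing too many functions on the graph — an almost-everywhere differentiable $G$ that fails differentiability on a null set could still produce a valid graph pair, breaking the ``only if'' direction of the reduction. To overcome this I would either (i) engineer the target functions so that membership on the graph forces genuine everywhere control — for example by encoding the candidate set into the structure of the $ACG_{\ast}$ witnesses so that a failure of differentiability at even one point destroys the $AC_{\ast}$ decomposition — or (ii) reduce from a different non-Borel set better adapted to almost-everywhere phenomena, or (iii) exploit the non-Borelness of $ACG_{\ast}([a,b])$ established in Theorem~\ref{Den:cor:main2prior} directly, via the map $F\mapsto (F', F)$, showing this map is a Borel reduction of $ACG_{\ast}([a,b])$ into the graph and concluding non-Borelness by transfer. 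Since the section is advertised as proving Theorems~\ref{Den:cor:main2prior}, \ref{thm:mynewthing}, and~\ref{thm:mainnew5} together, I anticipate the intended architecture is to prove the graph result and the $ACG_{\ast}$ result in tandem, with the non-Borelness of each feeding the other through these natural projection and section maps.
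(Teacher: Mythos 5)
There is a genuine gap, on both halves. For the upper bound, your reduction to ``\,$AC_{\ast}(K)$ is Borel in $(F,K)$ plus a collapse of the quantifier over countable closed covers'' has the right shape, but neither of your proposed collapse mechanisms works. There is no fixed countable family of closed sets that can serve as $ACG_{\ast}$-witnesses (the witnessing sets are genuinely arbitrary; for the hard examples they are iterated Cantor-type sets), and even if one restricts to a countable family, the covering condition $[a,b]=\bigcup_n K_n$ is itself $\mathbf{\Pi}^1_1$ as a condition on the sequence, so the naive form of the definition is prima facie $\mathbf{\Sigma}^1_2$, not $\mathbf{\Sigma}^1_1$, and ``massaging the complement into $\mathbf{\Sigma}^1_1$ form'' is exactly the problem restated, not a solution. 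The paper's actual mechanism, which your ``tree-rank'' remark gestures at but does not supply, is a Cantor--Bendixson-style derivative: $D_{f,F}(K)$ is the set of points of $K$ at which $f$ fails to be locally $L^1$ or $F$ fails to be locally $AC_{\ast}$. One proves this map is Borel on $M[a,b]\times C[a,b]\times K[a,b]$ (Propositions~\ref{den:cor:sdfadasf:1} and~\ref{den:cor:sdfadasf:2}, using your Borelness of $AC_{\ast}$ as an ingredient), invokes the general theory of Borel derivatives (Kechris \S34.D, Theorem~34.10) to conclude that $\{(f,F):\exists\,\alpha<\omega_1\ D^{\alpha}_{f,F}([a,b])=\emptyset\}$ is coanalytic with $\left|f,F\right|$ a coanalytic rank on it, and then proves by a separate compactness/gluing argument (Propositions~\ref{Den:prop:Dendec07:1}--\ref{Den:prop:Dendec07:2}, Theorem~\ref{Den:thm:mymainman}, Corollary~\ref{Den:thm:myFTC}) that $(f,F)$ lies on the graph iff some $D^{\alpha+1}_{f,F}([a,b])=\emptyset$ and $F'=f$ a.e. Without the derivative operator and this equivalence, the coanalyticity claim is not established.

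For the lower bound, you correctly diagnose why the reduction from $\mathrm{Diff}[a,b]$ fails (the graph only sees $F'=f$ almost everywhere), but none of your fallbacks closes the hole. Option (iii) is circular in this architecture: Theorem~\ref{Den:cor:main2prior} is proved by the very same method at the same place, so its non-Borelness cannot ``feed'' the graph result; some independent source of non-Borelness is required. Options (i) and (ii), if carried out, would produce a Borel reduction of a $\mathbf{\Pi}^1_1$-complete set into the graph, i.e., they would prove coanalytic \emph{completeness} --- which the paper explicitly leaves as an open problem (Question~\ref{eqn:q1}); this is a strong sign that no such reduction is available by these means. The paper's argument avoids reductions entirely: since $\left|f,F\right|$ is a coanalytic rank on the graph, the boundedness theorem (Kechris Theorem~35.23) says that if the graph were Borel, the rank would be bounded by some $\alpha<\omega_1$ on it; this is then contradicted by Theorem~\ref{Den:exfactor}, a transfinite construction (iterated Cantor sets at successor stages, improper-integral gluing at limit stages) of Denjoy integrable $f$ with $F(x)=\int_a^x f$ whose rank $\left|f,F\right|$ exceeds any prescribed countable ordinal. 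Your proposal contains neither the rank-boundedness argument nor the high-rank construction, and these are the engine of the non-Borelness proof.
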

\noindent In all three of our theorems, there is a positive claim about a certain set being coanalytic (resp.~${\bf \Sigma^1_2}$), and a negative claim that the sets are not Borel (resp. not analytic). The positive part of Theorem~\ref{thm:mynewthing} follows directly from the positive part of Theorem~\ref{Den:cor:main2prior} and the observation that being differential almost everywhere is a Borel property of an element~$F\in C[a,b]$, and the relation~$F^{\prime}=f$~a.e. is a Borel property of a pair~$(F,f)\in C[a,b]\times M[a,b]$ (cf. Proposition~\ref{den:imaprop}). So~$\mathrm{Den}[a,b]$ is~${\bf \Sigma^1_2}$ because it is the image of a conalytic set~$ACG_{\ast}[a,b]$ under the Borel operation~$F\mapsto \gamma(F)$, where~$\gamma(F)=f$ if~$F^{\prime}$ is differentiable a.e. and~$F^{\prime}=f$ a.e., and~$\gamma(F)=0$ otherwise. It remains to prove the positive parts of Theorem~\ref{Den:cor:main2prior} and Theorem~\ref{thm:mainnew5}, as well as the negative parts of all three theorems.

The basic idea of these proofs is to look at coanalytic ranks associated to maps on the Polish space~$K[a,b]$ of closed subsets of~$[a,b]$ (cf. Kechris \S34.D pp. 270~ff). This space has the topology generated by the ``miss'' sets~$\{K\in K[a,b]: K\cap U^{c} =\emptyset\}$ and the ``hit'' sets~$\{K\in K[a,b]: K\cap U\neq \emptyset\}$, where~$U\subseteq [a,b]$ is open (cf. Kechris~\cite{Kechris1995} \S4.F pp.~24 ff). The proofs proceed by defining, for each~$f\in M[a,b]$ and~$F\in C[a,b]$ and pair~$(f,F)\in M[a,b]\times C[a,b]$, the following three Borel functions from~$K[a,b]$ to~$K[a,b]$: 
\begin{equation}\label{eqn:132412341}
K\mapsto D_f(K), \hspace{5mm} K\mapsto D_F(K), \hspace{5mm} K\mapsto D_{f,F}(K)
\end{equation}
These functions are called ``derivatives'' since they resemble the Cantor-Bendixson derivative in certain of their formal properties. The intuitive idea is that~$D_f(K)$ consists of those points of~$K$ at which~$f$ is not locally Lebesgue integrable,~$D_F(K)$ consists of those points of~$K$ at which~$F$ is not locally absolutely continuous in the restricted sense, and~$D_{f,F}(K)=D_f(K)\cup D_F(K)$. For the formal definition of these three derivatives see~(\ref{eqn:mymyd1})-(\ref{eqn:mymyd3}) below.

These derivatives can then be iterated countably many times by defining~$D^{\alpha+1}(K)=D(D^{\alpha}(K))$ and taking intersections at limit stages. Since these maps are Borel (cf. \S\ref{Den:Den02.3}), it follows from the general theory of such derivatives that the set of elements~$f, F$ whose derivatives~$D^{\alpha}_f([a,b])$, \;$D^{\alpha}_F([a,b])$, \;$D^{\alpha}_{f,F}([a,b])$ are eventually empty, are themselves coanalytic sets. One can then show that~$f\in \mathrm{Den}[a,b]$ with $F(x)=\int_a^x f$ iff there is a countable ordinal $\alpha$ such that~$D_{f,F}^{\alpha}([a,b])$ is empty and $F^{\prime}=f$ a.e. (cf. Corollary~\ref{Den:thm:myFTC}). Putting these various results together at the end of \S\ref{Den:Den02.3} immediately gives the positive parts of the Theorem~\ref{Den:cor:main2prior} and Theorem~\ref{thm:mainnew5}. The general theory of these derivatives also yields the result that subsets of these spaces whose derivatives vanish below some antecedently specified countable ordinal are Borel. Hence, by showing that there are elements whose derivative only vanishes at arbitrarily high countable ordinals (cf. Theorem~\ref{Den:exfactor}), we are able to argue for the negative parts of all three theorems at the close of \S\ref{Den:Den02.3}.

Before outlining the content of the different sections of this paper, let us briefly describe our results on the model theory of the Denjoy integral. Dougherty and Kechris' question was essentially a question of how difficult it is to define the Denjoy integral. One can also ask about the complexity of the sets which are {\it defined~by} this integral. Here the appropriate setting seems to be that of model theory, where one asks what can be defined in a first-order way from the Denjoy integral, and a natural language for this is the language of~$\mathbb{R}[X]$-modules, where the indeterminate~$X$ is interpreted as the indefinite Denjoy integral, so that the atomic formulas are a very elementary type of integral~equation. One of the basic questions to ask here is whether there is any first-order difference between the Denjoy integrable functions, the Lebesgue integrable functions, and the continuous functions with the Riemann integral. This question is answered here in the negative by the following theorem:
\begin{thm}\label{Den:cor:mymythmdddd} As~$\mathbb{R}[X]$-modules with the indeterminate~$X$ interpreted as the indefinite integral~$Xf\mapsto \int_a^x f$, the continuous functions~$C[a,b]$, the Lebesgue integrable functions~$L^{1}[a,b]$, and the Denjoy integrable functions~$\mathrm{Den}[a,b]$ are elementarily equivalent. Further, as~$\mathbb{Q}[X]$-modules, they have the same computable complete theory. 
\end{thm}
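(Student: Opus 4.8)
The plan is to prove the elementary equivalence via a back-and-forth/quantifier-elimination strategy tailored to the theory of modules. The key structural observation is that as an $\mathbb{R}[X]$-module, each of these function spaces is torsion-free: the indefinite integral operator $X$ has trivial kernel on continuous (and integrable) functions, since $\int_a^x f = 0$ for all $x$ forces $f = 0$ a.e. Moreover, $X$ is locally nilpotent in no nontrivial way, so one expects each module to be a torsion-free $\mathbb{R}[X]$-module, and the relevant first-order content is captured by the lattice of positive-primitive (pp) definable subgroups. By the general theory of modules (Prest's theory), two modules over a ring $R$ are elementarily equivalent iff they satisfy the same \emph{pp-pairs}, i.e.\ for each pair of pp-formulas $\varphi(\bar x) \geq \psi(\bar x)$ the Baur--Monk invariant $|\varphi(M)/\psi(M)|$ is the same (either both infinite, or equal and finite). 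So the first main step is to compute, for each of $C[a,b]$, $L^1[a,b]$, and $\mathrm{Den}[a,b]$, this invariant for every pp-pair, and show they agree.

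First I would reduce the pp-formulas to a manageable normal form. Over the PID $\mathbb{R}[X]$ (or $\mathbb{Q}[X]$), pp-formulas in the language of modules are equivalent to finite conjunctions of divisibility and annihilator conditions of the form $p(X) \mid \bar a \cdot \bar v$ and $q(X) \cdot \bar v = 0$, where $p, q$ range over polynomials. Since each module is torsion-free, all annihilator conditions $q(X)\bar v = 0$ collapse to $\bar v = 0$ (for $q \neq 0$), so the only genuine content lies in \emph{divisibility}: for a polynomial $p(X) = c_k X^k + \cdots + c_0$, the key question is whether a given $g$ lies in the image of $p(X)$, i.e.\ whether $g = p(X) h$ for some $h$ in the module. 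The crucial analytic input is then to determine the index of $p(X)\cdot M$ inside $M$, equivalently the cokernel of the operator $p(X)$ acting on each space. Since $X$ is the Volterra-type integration operator and $p(X)$ is a polynomial in it, $p(X) = c_0 I + (\text{compact, quasinilpotent operator})$, so its invertibility and the size of its cokernel are governed entirely by the constant term $c_0$. When $c_0 \neq 0$, one expects $p(X)$ to be surjective on all three spaces (the integral operator part is quasinilpotent, so $p(X)$ is invertible by a Neumann series), giving trivial cokernel; when $c_0 = 0$, one factors out the maximal power of $X$ and the cokernel is governed by the finite-dimensional obstruction of iterated integration, namely the failure of an $F$ to vanish together with enough of its iterated integrals/derivatives at the basepoint $a$.

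The heart of the argument is therefore the claim that for every polynomial $p(X)$, the cokernel $M/p(X)M$ has the same cardinality (as an $\mathbb{R}$- or $\mathbb{Q}$-vector space, hence the same Baur--Monk invariant, since these are all infinite-dimensional) across the three spaces. For the $c_0 \neq 0$ case this reduces to showing $p(X)$ is onto each space, which follows from the quasinilpotence of the Volterra operator together with the fact that all three spaces are closed under the bounded inverse $p(X)^{-1} = \sum_j (-c_0^{-1})^{j+1}(\text{nilpotent part})^j$ — here one must check that applying a bounded polynomial-in-$X$ inverse keeps a continuous function continuous, a Lebesgue function Lebesgue, and a Denjoy function Denjoy, the last being the only nonroutine verification. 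For the $c_0 = 0$ case, writing $p(X) = X^m r(X)$ with $r(0)\neq 0$, the cokernel of $X^m$ is an $m$-dimensional obstruction given by the initial conditions, which is manifestly identical in all three spaces since integration $X$ behaves identically on the relevant finite-dimensional data. \textbf{The main obstacle} I anticipate is precisely this closure-under-$p(X)^{-1}$ step for $\mathrm{Den}[a,b]$: one must verify that the Denjoy-integrable functions form a module stable under the bounded operators arising from inverting $p(X)$, which amounts to checking that Denjoy integrability is preserved under the relevant linear combinations of iterated indefinite integrals — a fact that follows from the additivity and $\mathbb{R}$-linearity of the Denjoy integral together with Theorem~\ref{Den:thm:FTCforDenprior}, but requires care because $\mathrm{Den}[a,b]$ is \emph{not} a Banach space and the Neumann series must be summed at the level of the functions themselves rather than in an operator norm. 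Once the Baur--Monk invariants are matched for all pp-pairs, elementary equivalence follows immediately by the Baur--Monk theorem; and for the $\mathbb{Q}[X]$ statement, decidability follows because the invariants are uniformly computable (the cokernel dimensions depend computably on the polynomial $p$), so the theory is decidable by the effective version of the Baur--Monk elimination of quantifiers down to invariant conditions.
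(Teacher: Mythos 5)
Your overall architecture matches the paper's: pp-elimination/Baur--Monk invariants (the paper's Theorem~\ref{Den:Denmodel:ppElim}), reduction over the PID to basic pp-subgroups, and the key analytic fact that $p(X)$ is an automorphism whenever $X\nmid p$. Your route to that last fact (quasinilpotence of the Volterra operator and a Neumann series, rather than the paper's Riesz alternative via Arzel\`a--Ascoli plus ODE uniqueness) is a legitimate variant, and your flagged ``main obstacle'' for $\mathrm{Den}[a,b]$ is resolvable exactly as you suspect: since $X\,\mathrm{Den}[a,b]\subseteq C[a,b]$, the series after its first term lives in the Banach space $C[a,b]$, so the solution to $p(X)f=g$ has the form $c_0^{-1}g+h$ with $h\in C[a,b]$ --- this is in essence the Federson--Bianconi reduction the paper uses. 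So the $c_0\neq 0$ half of your plan is sound.

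The genuine gap is your $c_0=0$ case. You claim that for $p(X)=X^m r(X)$ with $r(0)\neq 0$, ``the cokernel of $X^m$ is an $m$-dimensional obstruction given by the initial conditions.'' This is false, and it is false in a way that matters, because it is precisely here that the real analytic work of the proof lies. The image $XC[a,b]$ is $\{F\in C^1[a,b]: F(a)=0\}$: membership in $XM$ imposes a \emph{regularity} (differentiability) condition, not merely vanishing of initial data, and since nowhere differentiable functions are comeager in $C[a,b]$, the quotient $C[a,b]/XC[a,b]$ is enormously infinite-dimensional, not one-dimensional. Worse, for $L^1[a,b]$ and $\mathrm{Den}[a,b]$, whose elements are a.e.-equivalence classes, ``initial conditions at $a$'' are not even well-defined, so your claim that the obstruction is ``manifestly identical in all three spaces'' has no content there. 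What is actually needed --- and what the paper supplies as Theorem~\ref{Den:Denmodel:index} --- is a proof that $[X^kM:X^{k+1}M]$ is infinite for each of these modules: for $C[a,b]$ via a Baire category argument in the Banach spaces $C^k[a,b]$ (each $X^{k+1}M$ is meager in $X^kM$ because of nowhere differentiable functions), and for the subinterval-closed modules $L^1[a,b]$ and $\mathrm{Den}[a,b]$ via a separate argument splicing a nowhere differentiable function against finitely many coset representatives on a partition of $[a,b]$. By coincidence your false claim would still output ``infinite index'' (a nonzero finite-dimensional real vector space is an infinite group), so the final invariant values you'd record agree with the truth, but the proposal as written contains no correct proof that the indexes $[X^kM:X^{k+1}M]$ are infinite, and hence no proof that the invariants match; the same missing lemma also undercuts the computability claim for the $\mathbb{Q}[X]$ statement, which in the paper flows through Propositions~\ref{Den:prop:thegroups} and~\ref{Den:prop:thegroups2} and Corollary~\ref{big:cor} once Theorem~\ref{Den:Denmodel:index} is in hand.
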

\noindent This final theorem is proven at the end of \S\ref{Den:modelsmodels}. Thus the conclusion of this part of the paper is that from an admittedly elementary model-theoretic standpoint, these integrals are indistinguishable. For suggestions as to less elementary perspectives, see \S\ref{sec:conclusions}.

This paper is organized as follows. In \S\ref{Den:Den01.2}, some basic facts related to the Denjoy integral are recalled and it is noted how one can define a series of subsets of the Denjoy integrable functions which relate to how long it takes to define the Denjoy integral in terms of the Lebesgue integral and improper integrals. In \S\ref{Den:Den02.1} the three Cantor-Bendixson-like derivatives mentioned above in~(\ref{eqn:132412341}) are formally defined and it is shown that there are Denjoy integrable functions whose derivatives vanish only at arbitrarily high countable ordinals (cf. Theorem~\ref{Den:exfactor}). In~\S\ref{Den:Den02.2}, it is shown that these two measures defined in the two previous subsections correspond exactly, and in particular that the vanishing of the derivatives in~\S\ref{Den:Den02.1} is correlated exactly with membership in the sequence of subsets from~\S\ref{Den:Den01.2}: this is the content of Theorem~\ref{Den:thm:mymainman} and Corollary~\ref{Den:thm:myFTC}. In \S~\ref{Den:Den02.3}, it is shown that the derivatives are Borel, which then permits us to deduce the main Theorems~\ref{Den:cor:main2prior}, \ref{thm:mynewthing}, and~\ref{thm:mainnew5}. In \S\ref{sec12341234} we turn to the development of the model-theoretic perspective pursued here and use calculations of indexes of subgroups to show that these modules are stable but not superstable and hence are model-theoretically more complex than the underlying vector spaces. Finally in \S\ref{Den:modelsmodels} we use the Riesz theorems from the theory of integral equations, in conjunction with the pp-elimination of quantifiers from the model theory of modules to deduce Theorem~\ref{Den:cor:mymythmdddd}.

\section{Basic Lemmas and the Subspaces}\label{Den:Den01.2}

The aim of this section is to briefly review some key facts about the Denjoy integral which we shall employ throughout this paper. As mentioned in the introduction, the indefinite integrals of the Denjoy integrable functions on $[a,b]$ are precisely the functions $F$ in $ACG_{\ast}[a,b]$ with $F(a)=0$. In what follows, we shall repeatedly appeal to the fact that every function in $ACG_{\ast}([a,b])$ is differentiable almost everywhere on~$[a,b]$ (cf. Gordon~\cite{Gordon1994}~Corollary~6.19~p.~100). 

Now, it is worth mentioning that there is a partial converse to this result. In particular, let us say that a function~$F:[a,b]\rightarrow \mathbb{R}$ is differential \emph{nearly} everywhere if~$F$ is differentiable except on a countable set. Then it turns out that if~$F\in C([a,b])$ is differentiable nearly everywhere then~$F\in ACG_{\ast}([a,b])$ (cf. Gordon~\cite{Gordon1994} p. 103 or Peng-Yee~\cite{Peng-Yee1989} p. 29). However, the full converse to this result is in general false. For examples of real-valued continuous function $F$ on $[a,b]$ that are differentiable almost everywhere but such that~$F\notin ACG_{\ast}([a,b])$, see Gordon~\cite{Gordon1994} p. 119. 

Frequently, we shall also appeal to certain elementary facts pertaining to the class $AC_{\ast}(E)$ (cf. Definition~\ref{Den:defn:ab2prior}). First, if $E$ is itself a closed interval, then any continuous function in $AC(E)$ is in $AC_{\ast}(E)$ and vice-versa; so it is only for more complicated sets $E$ that the two notions diverge. Second, if $Q\subseteq E$ is dense, then any continuous function in $AC_{\ast}(Q)$ is in $AC_{\ast}(E)$ and vice-versa; hence without loss of generality, we may restrict attention to evaluating absolute continuity in the restricted sense on dense subsets (cf. Gordon~\cite{Gordon1994} Theorem~6.2~(d) pp.~90-91). Third, if $F$ is a continuous function on $[a,b]$ and $E\subseteq [a,b]$ is closed with $(a,b)-E=\bigsqcup_n (c_n,d_n)$, then one has the following (Gordon~\cite{Gordon1994} Theorem 6.2 pp. 90-91):
\begin{equation}\label{eqn:whatwhat2}
F\in AC_{\ast}(E) \Longrightarrow \sum_{n} \omega(F,[c_{n}, d_{n}])<\infty
\end{equation}
Finally, one has that if $f\in \mathrm{Den}[a,b]$ then there are~$K_{n}\in K[a,b]$ with~$[a,b]=\bigcup_{n} K_{n}$ and~$f\chi_{K_{n}}\in L^{1}[a,b]$ (cf. Gordon~\cite{Gordon1994}~Theorem~9.18~pp.~148-149). Recall, in this, that $K[a,b]$ denotes the Polish space of closed subsets of $[a,b]$.

Two basic lemmas on the Denjoy integral are important for what follows. The first gives a useful sufficient condition for a function to be Denjoy integrable, and in particular provides a way to start building up the Denjoy integral step-by-step from the Lebesgue integral. Hence we dub this lemma the ``Step Lemma'':
\begin{lem}\label{Den:Den03:lem:leb} (Step Lemma) Suppose that~$f\in M[a,b]$ and~$K\in K[a,b]$ and~$(a,b)-K=\bigsqcup_{n=1}^{\infty} (c_{n},d_{n})$. Further suppose that~$f\chi_{K}\in L^{1}[a,b]$, and~$f\chi_{[c_{n},d_{n}]}\in \mathrm{Den}[a,b]$ and $\sum_{n=1}^{\infty} \omega(\int_{c_{n}}^{x} f, [c_{n},d_{n}])<\infty$. Then~$f\in \mathrm{Den}[a,b]$ and~$\int_{a}^{b} f = \int_{K} f + \sum_{n=1}^{\infty} \int_{c_{n}}^{d_{n}} f$.
\end{lem}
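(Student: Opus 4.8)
The plan is to construct the indefinite integral $F$ explicitly by gluing the Lebesgue integral on $K$ to the Denjoy integrals on the complementary intervals, verify that $F\in ACG_{\ast}([a,b])$ with $F(a)=0$, check that $F^{\prime}=f$ a.e., and then invoke the Fundamental Theorem for the Denjoy integral (Theorem~\ref{Den:thm:FTCforDenprior}) to conclude. Writing $H_{n}(x)=\int_{c_{n}}^{x} f$ for the indefinite Denjoy integral on $[c_{n},d_{n}]$ (which exists since $f\chi_{[c_{n},d_{n}]}\in \mathrm{Den}[a,b]$), normalized so that $H_{n}(c_{n})=0$, and writing $\omega_{n}=\omega(H_{n},[c_{n},d_{n}])$, I would set
\[
F(x)=\int_{a}^{x} f\chi_{K} + \sum_{n=1}^{\infty}\phi_{n}(x),\qquad \phi_{n}(x)=H_{n}\!\left(\min(\max(x,c_{n}),d_{n})\right),
\]
so that $\phi_{n}$ vanishes to the left of $c_{n}$, equals $H_{n}$ on $[c_{n},d_{n}]$, and is frozen at the constant $\int_{c_{n}}^{d_{n}} f$ to the right of $d_{n}$. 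Because $H_{n}(c_{n})=0$ gives $\sup_{x}|\phi_{n}(x)|\le \omega_{n}$ and $\sum_{n}\omega_{n}<\infty$ by hypothesis, the series converges uniformly and $F$ is continuous; moreover $F(a)=0$ while $F(b)$ is exactly the claimed value $\int_{K} f+\sum_{n}\int_{c_{n}}^{d_{n}} f$, so the integral formula is automatic once $\int_{a}^{x} f=F(x)$ is established.

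Next I would show $F\in ACG_{\ast}([a,b])$ by treating $K$ and the closed intervals $[c_{n},d_{n}]$ separately. On $[c_{n},d_{n}]$ the other summands $\phi_{m}$ ($m\neq n$) are constant, so there $F$ agrees with $G+H_{n}$ up to an additive constant, where $G(x)=\int_{a}^{x} f\chi_{K}$; since $G\in AC([c_{n},d_{n}])$ and hence $G\in AC_{\ast}([c_{n},d_{n}])$, while $H_{n}\in ACG_{\ast}([c_{n},d_{n}])$, and $ACG_{\ast}$ is closed under sums, we get $F\in ACG_{\ast}([c_{n},d_{n}])$. The more delicate point is $F\in AC_{\ast}(K)$. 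Given $\epsilon>0$ I would fix $N$ with $\sum_{n>N}\omega_{n}<\epsilon/2$, use absolute continuity of $G$ on $[a,b]$ to choose $\delta_{1}$ forcing $\sum_{J}\omega(G,J)<\epsilon/2$ on pre-partitions of total length below $\delta_{1}$, and set $\delta=\min(\delta_{1},\min_{n\le N}(d_{n}-c_{n}))$. The key observation is that for a $K$-edged pre-partition the endpoints lie in $K$, so any $(c_{n},d_{n})$ meeting some $J$ is contained in it (its interior avoids $K$), whence $\omega(F,J)\le \omega(G,J)+\sum_{n:[c_{n},d_{n}]\subseteq J}\omega_{n}$; requiring total length below $\delta$ excludes every interval of length $\ge\delta$, in particular every $[c_{n},d_{n}]$ with $n\le N$, so only the tail $n>N$ contributes across all $J$, and that tail is bounded by $\epsilon/2$. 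Collecting the countable closed cover consisting of $K$ together with the $AC_{\ast}$-pieces witnessing $F\in ACG_{\ast}([c_{n},d_{n}])$ then yields $F\in ACG_{\ast}([a,b])$.

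It remains to identify $F^{\prime}$. Writing $F=G+\Phi$ with $\Phi=\sum_{n}\phi_{n}$: on each open interval $(c_{n},d_{n})$ one has $G^{\prime}=f\chi_{K}=0$ and $\Phi^{\prime}=H_{n}^{\prime}=f$ a.e., so $F^{\prime}=f$ a.e. off $K$; and on $K$ the Lebesgue Fundamental Theorem (Theorem~\ref{Den:thm:FTCforLeb2prior}) gives $G^{\prime}=f$ a.e., so it suffices to prove $\Phi^{\prime}=0$ a.e. on $K$. This is the step I expect to be the main obstacle, since the $\omega_{n}$ are controlled only in total and need not be small relative to the lengths $d_{n}-c_{n}$, so a naive density argument fails. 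I would compare $\Phi$ to the continuous nondecreasing majorant $T(x)=\sum_{n}\omega_{n}\tau_{n}(x)$, where $\tau_{n}$ ramps linearly from $0$ to $1$ across $[c_{n},d_{n}]$: for $x<y$ in $K$ one gets $|\Phi(y)-\Phi(x)|\le \sum_{[c_{n},d_{n}]\subseteq[x,y]}\omega_{n}\le T(y)-T(x)$, and Fubini's theorem on term-by-term differentiation of a convergent series of monotone functions gives $T^{\prime}=\sum_{n}\omega_{n}\tau_{n}^{\prime}$ a.e., which vanishes a.e. on $K$ because each $\tau_{n}^{\prime}$ is supported on $[c_{n},d_{n}]$. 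The residual difficulty is the contribution of an interval straddling the endpoint of a small increment, which I would control by a Borel--Cantelli argument applied to the sets $(c_{n}-k\omega_{n},c_{n})$ and $(d_{n},d_{n}+k\omega_{n})$, whose measures sum over $n$ to $k^{-1}\sum_{n}\omega_{n}$: for a.e. $x\in K$ and every $k$, only finitely many complementary intervals with an endpoint near $x$ satisfy $\omega_{n}>k^{-1}\mathrm{dist}(x,\{c_{n},d_{n}\})$, so the straddling term is $o(h)$ and $\Phi^{\prime}(x)=0$. With $F\in ACG_{\ast}([a,b])$, $F(a)=0$, and $F^{\prime}=f$ a.e. in hand, Theorem~\ref{Den:thm:FTCforDenprior} yields $f\in\mathrm{Den}[a,b]$ with $\int_{a}^{x} f=F(x)$, and evaluation at $x=b$ delivers the displayed formula.
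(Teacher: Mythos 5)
Your proof is correct, but note that the paper does not actually prove this lemma at all: its ``proof'' is a citation to Gordon's \emph{The Integrals of Lebesgue, Denjoy, Perron, and Henstock} (Theorem~7.12, p.~111), so what you have produced is a self-contained reconstruction of the textbook argument rather than a parallel to anything internal to the paper. The skeleton --- glue $F=G+\sum_n\phi_n$ with $G(x)=\int_a^x f\chi_K$ and clamped translates $\phi_n$ of the $H_n$, get uniform convergence from $\sum_n\omega_n<\infty$, verify $F\in AC_{\ast}(K)$ by splitting off a finite head of intervals longer than $\delta$ and absorbing the tail oscillations, and identify $F'$ via a monotone majorant --- is exactly the standard route, and your two key geometric observations check out: a $K$-edged interval $J$ meeting $(c_n,d_n)$ must contain $[c_n,d_n]$, and each $[c_n,d_n]$ lies in at most one member of a non-overlapping family. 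Two small remarks. First, there is a harmless arithmetic slip in the Borel--Cantelli step: the sets $(c_n-k\omega_n,c_n)$ and $(d_n,d_n+k\omega_n)$ have measures summing to $2k\sum_n\omega_n$, not $k^{-1}\sum_n\omega_n$; what matters is only finiteness for each fixed $k$, and the conclusion $\omega_n\leq k^{-1}\,\mathrm{dist}(x,\{c_n,d_n\})\leq k^{-1}|h|$ for all but finitely many $n$ then makes the straddling term $o(h)$ after intersecting over $k$, as you say. Second, you could avoid Borel--Cantelli entirely: once $F\in ACG_{\ast}([a,b])$ is established, $F'$ (hence $\Phi'=F'-G'$) exists a.e.\ (Gordon, Corollary~6.19, quoted in \S\ref{Den:Den01.2}), so at a.e.\ point of $K$ --- a density point of $K$ at which $T'$ exists and vanishes --- you may evaluate $\Phi'(x)$ along a sequence in $K$, where your bound $|\Phi(y)-\Phi(x)|\leq T(y)-T(x)$ for $x,y\in K$ immediately gives $\Phi'(x)=0$. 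Your version has the mild virtue of proving differentiability of $\Phi$ on $K$ directly instead of borrowing it from the $ACG_{\ast}$ theory, at the cost of the extra covering argument.
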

\begin{proof}
See Gordon~\cite{Gordon1994}~Theorem~7.12~p. 111.
\end{proof}
\noindent By looking at equation~(\ref{eqn:whatwhat2}) above, one sees immediately that the assumption that \\ $\sum_{n=1}^{\infty} \omega(\int_{c_{n}}^{x} f, [c_{n},d_{n}])<\infty$ may be replaced by the assumption that there is $F\in AC_{\ast}(K)$ such that~$F(x)=\int_{c_{n}}^{x} f$ on~$[c_{n}, d_{n}]$. Often in what follows we will apply this variant of the lemma. Finally, the Improper Integrals Lemma just says that there are no improper integrals in the context of Denjoy integration:
\begin{lem}\label{Den:Den03:lem:imp} (Improper Integrals Lemma)\index{Improper Integrals Lemma, Lemma~\ref{Den:Den03:lem:imp}} Suppose~$f\in M[a,b]$. If~$f\chi_{[c,b]}\in \mathrm{Den}[a,b]$ for every~$c\in (a,b)$, then~$f\in \mathrm{Den}[a,b]$ with~$\int_{a}^{b} f= L$ if and only if~$\lim_{c\searrow a^{+}} \int_{c}^{b} f$ exists and is equal to~$L$. Likewise, if~$f\chi_{[a,c]}\in \mathrm{Den}[a,b]$ for every~$c\in (a,b)$, then~$f\in \mathrm{Den}[a,b]$ with~$\int_{a}^{b} f= L$ if and only if~$\lim_{c\nearrow b^{-}} \int_{a}^{c}~f$ exists and is equal to~$L$.
\end{lem}
\begin{proof}
See Gordon~\cite{Gordon1994}~Theorem~9.21~p. 150 or Swartz~\cite{Swartz2001} Theorem~4 pp.~25-26.
\end{proof}

These two lemmas can be used to define a series of subsets of $\mathrm{Den}[a,b]$ which reflect how long it takes to recover Denjoy integration from Lebesgue integration and improper integration. These subsets are closed under scalar multiplication, and they are \emph{subinterval-closed} in that if they contain $f$ then they contain $f\chi_{(c,d)}$ for any interval $(c,d)$. To define these subsets, let's first define two preliminary notions:

\begin{defn}\label{den:givenbyimp} Suppose that~$\mathcal{X}\subseteq \mathrm{Den}[a,b]$. Then~$f\in \mathrm{Den}[a,b]$ is {\it an improper integral of~$\mathcal{X}$} if there is a countable sequence~$(a_{n}, b_{n})\subseteq (a,b)$ such that (i)~$(a,b)=\bigcup_{n=1}^{\infty}(a_{n}, b_{n})$ and~$(a_{n}, b_{n})\subseteq (a_{n+1}, b_{n+1})$ and (ii)~$f\chi_{(a_{n},b_{n})}\in \mathcal{X}$ and (iii)~$\lim_{c\searrow a^{+}}\int_{c}^{b_{1}} f$ exists, and (iv)~$\lim_{c\nearrow b^{-}} \int_{a_{1}}^{c} f$ exists. Further, define~$\mathrm{Lim}(\mathcal{X})$ to be the set of improper integrals of~$\mathcal{X}$. \index{Improper integrals of~$\mathcal{X}$,~$\mathrm{Lim}(\mathcal{X})$, Definition~\ref{den:givenbyimp}}
\end{defn}

\begin{defn}\label{den:givenbyleb}
Suppose that~$\mathcal{X}\subseteq \mathrm{Den}[a,b]$. Then~$f\in \mathrm{Den}[a,b]$ is {\it given by the Step Lemma from~$\mathcal{X}$} if there is a~$K\in K[a,b]$ with~$(a,b)-K=\bigsqcup_{n=1}^{\infty} (c_{n},d_{n})$ such that~$f\chi_{K}\in L^{1}[a,b]$ and~$f\chi_{(c_{n},d_{n})}\in \mathcal{X}$, and there is~$F\in AC_{\ast}(K)$ satisfying~$F(x)=\int_{a}^{x} f$. Further, let~$\mathrm{Step}(\mathcal{X})$ be the set of elements which are given by Step Lemma~\ref{Den:Den03:lem:leb} from~$\mathcal{X}$.
\end{defn}
\noindent Then we define the subsets $\mathrm{Den}_{\alpha}[a,b]$ of $\mathrm{Den}[a,b]$ by  recursion:
\begin{defn}\label{den:defn:thesubspaces} Define $\mathrm{Den}_{0}[a,b] = L^{1}[a,b]$, and for~$\alpha>0$ define \[\mathrm{Den}_{\alpha}[a,b] =\mathrm{Step}(\mathrm{Lim}(\bigcup_{\beta<\alpha} \mathrm{Den}_{\beta}[a,b]))\]
\end{defn}
\noindent It is then routine to show that these subsets are closed under scalar multiplication, that they are subinterval-closed, that they are closed under a.e. difference, and that they are non-decreasing as the ordinal $\alpha$ increases.

\section{Three Derivatives and Functions of Arbitrarily High Rank}\label{Den:Den02.1}

Now we proceed to the formal definition of our three Cantor-Bendixson-like derivatives which we mentioned in~(\ref{eqn:132412341}). There is a general framework for these kinds of derivatives, which is set out in Kechris~\cite{Kechris1995} \S34.D. Hence, let's begin by recalling the basics of this framework. In what follows, recall that $K[a,b]$ denotes the Polish space of closed subsets of~$[a,b]$. Suppose that~$\mathcal{B}\subseteq K[a,b]$ is closed under closed subsets, i.e., if~$K\in \mathcal{B}$ and~$L\in K[a,b]$ and $L\subseteq K$ then~$L\in \mathcal{B}$. Then define the derivative map~$D_{\mathcal{B}}: K[a,b]\rightarrow K[a,b]$ by 
\begin{equation}\label{den:eqn:handyequation}
D_{\mathcal{B}}(K) = \{x\in K: \overline{U\cap K}\notin \mathcal{B} \mbox{ for any open } U\ni x\}
\end{equation}
Further, recursively define maps~$D^{\alpha}_{\mathcal{B}}: K[a,b]\rightarrow K[a,b]$ by 
\begin{equation}\label{eqn:mapiterates}
D^{0}_{\mathcal{B}}(K)=K, \hspace{3mm} D^{\alpha+1}_{\mathcal{B}}(K) = D_{\mathcal{B}}(D^{\alpha}_{\mathcal{B}}(K)), \hspace{3mm} D^{\alpha}_{\mathcal{B}}(K) = \bigcap_{\beta<\alpha} D^{\beta}_{\mathcal{B}}(K), \mbox{ $\alpha$ limit}
\end{equation}
Finally, for $K\in K[a,b]$, we define its rank $\left|K\right|_{\mathcal{B}}$ relative to $\mathcal{B}$ as follows
\begin{equation}\label{eqn:eqn:finallimit}
\left|K\right|_{\mathcal{B}}=\inf \{\alpha: D^\alpha _{\mathcal{B}} (K)=D_{\mathcal{B}} ^{\alpha+1}(K)\}
\end{equation}
\noindent and finally we set $D^\infty_{\mathcal{B}}(K)= D_{\mathcal{B}}^{|K|_{\mathcal{B}}}(K)$.

These maps have many of the same properties as the Cantor-Bendixson derivative. First, one has monotonicity: if $L,K$ are closed sets then 
\begin{equation}
L\subseteq K \Longrightarrow D^{\alpha}_{\mathcal{B}}(L)\subseteq D^{\alpha}_{\mathcal{B}}(K)
\end{equation}
Second, one has $\left| K\right|_{\mathcal{B}}<\omega_{1}$, or that the rank is always a countable ordinal. Hence, the set $D^{\infty}_{\mathcal{B}}(K)$ defined immediately below~(\ref{eqn:eqn:finallimit}) can be written as an intersection over countable ordinals as follows:
\begin{equation}
D^{\infty}_{\mathcal{B}}(K) = \bigcap_{\alpha<\omega_1} D^{\alpha}_{\mathcal{B}}(K)
\end{equation}
Third and relatedly, for $K\in K[a,b]$, let us define $K\in \mathcal{B}_{\sigma}$ iff~$K$ is the countable union of elements from~$\mathcal{B}$. Then it turns out that $K\in \mathcal{B}_{\sigma}$ iff $D^{\infty}_{\mathcal{B}}(K) = \emptyset$. This fact is important because often in what follows we will be interested in the case where the derivative vanishes, i.e. in the case where $D^{\infty}_{\mathcal{B}}(K) = \emptyset$, and this last fact tells us that this happens exactly when $K$ can be written as a countable union of elements from $\mathcal{B}$. For the proof that these derivatives have all the properties mentioned in this paragraph, see  Kechris~\cite{Kechris1995} \S34.D.

The specific derivatives we are interested in are associated to a measurable function $f\in M[a,b]$ and a continuous function $F\in C([a,b])$. Given such functions, we define:
\begin{equation}\label{eqn:mymymy23142134}
\mathcal{B}_{f} = \{K\in K[a,b]: f\chi_{K}\in L^{1}[a,b]\}, \hspace{3mm} \mathcal{B}_{F} = \{ K\in K[a,b]: F\in AC_{\ast}(K)\}
\end{equation}
And then we further set $\mathcal{B}_{f,F} = \mathcal{B}_{f}\cap \mathcal{B}_{F}$. Since~$\mathcal{B}_{f}$,~$\mathcal{B}_{F}$, and~$\mathcal{B}_{f,F}$ are closed under closed subsets, we may then use~(\ref{den:eqn:handyequation}) to define:
\begin{equation}
D_{f}(K)=D_{\mathcal{B}_{f}}(K), \hspace{3mm} D_{F}(K)=D_{\mathcal{B}_{F}}(K), \hspace{3mm} D_{f,F}(K)=D_{\mathcal{B}_{f,F}}(K)
\end{equation}
These definitions are then equivalent to the following, by using elementary properties of Lebesgue integrability as well as absolute continuity in the restricted sense, and moreover in these equivalent formalizations one has the freedom to restrict attention to endpoints $c,d$ which are rational:
\begin{equation}
D_{f}(K)  =  \{x\in K: f\chi_{[c,d]\cap K}\notin L^{1}[a,b] \mbox{ for any } (c,d)\ni x\} \label{eqn:mymyd1}
\end{equation}
\begin{equation}
D_{F}(K)  =  \{x\in K: F\notin AC_{\ast}([c,d]\cap K) \mbox{ for any } (c,d)\ni x \}
\label{eqn:mymyd2}
\end{equation}
\begin{equation}
D_{f,F}(K)=D_{f}(K)\cup D_{F}(K)\label{eqn:mymyd3}
\end{equation}
As one can see,~$D_{f}(K)$ is the points of~$K$ where~$f$ is not locally Lebesgue integrable, while~$D_{F}(K)$ is the points of~$K$ where~$F$ is not locally absolutely continuous in the restricted sense. Comparing this to the Fundamental Theorem of Calculus for Lebesgue Integrals (cf. Theorem~\ref{Den:thm:FTCforLeb2prior}), one sees that these derivatives record the points at which the Fundamental Theorem locally fails for a measurable function~$f$ and a continuous function~$F$.

Fixing still a measurable function $f$ and a continuous function $F$, one then recursively defines the iterates $D_f^{\alpha}(K)$ as in~(\ref{eqn:mapiterates}):
\begin{equation}
D^{0}_{f}(K)=K, \hspace{3mm} D^{\alpha+1}_{f}(K) = D_{f}(D^{\alpha}_{f}(K)), \hspace{3mm} D^{\alpha}_{f}(K) = \bigcap_{\beta<\alpha} D^{\beta}_{f}(K), \mbox{ $\alpha$ limit}
\end{equation}
And one proceeds similarly with $D_F^{\alpha}(K)$. To associate a rank directly to $f\in M[a,b]$ and $F\in C([a,b])$, one then employs~(\ref{eqn:eqn:finallimit}) to define:
\begin{equation}
\left| f\right| = \left| [a,b]\right|_{\mathcal{B}_{f}}, \hspace{3mm} \left| F\right| = \left| [a,b]\right|_{\mathcal{B}_{F}}, \hspace{3mm} \left| f,F\right| = \left| [a,b]\right|_{\mathcal{B}_{f,F}}
\end{equation}
Hence, the rank $\left| f\right|$ is the least ordinal such that $D_f^{\alpha}([a,b])=D_f^{\alpha+1}([a,b])$, and similarly for the other ranks. Later in this section (Theorem~\ref{Den:exfactor}), we will show that the ranks $\left|f,F\right|$ of Denjoy integrable functions~$f$ and their indefinite integrals~$F$ may be an arbitrarily high countable ordinal.

Let us then note that the derivatives eventually vanish for Denjoy integrable functions and their indefinite integrals:
\begin{prop}\label{Den:cor:corenddd}
Let~$f\in \mathrm{Den}[a,b]$ and let~$F(x)=\int_{a}^{x} f$ and let~$K\in K[a,b]$. Then (i)~$D_{f}^{\infty}(K)=\emptyset$, (ii)~$D_{F}^{\infty}(K)=\emptyset$, and (iii)~$D_{f,F}^{\infty}(K)=\emptyset$.
\end{prop}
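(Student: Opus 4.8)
The plan is to reduce all three claims to the criterion, recalled in the paragraph preceding~(\ref{eqn:mymymy23142134}), that for a family~$\mathcal{B}\subseteq K[a,b]$ closed under closed subsets one has~$D^{\infty}_{\mathcal{B}}(K)=\emptyset$ if and only if~$K\in \mathcal{B}_{\sigma}$, i.e.\ $K$ is a countable union of members of~$\mathcal{B}$. Since~$\mathcal{B}_{f}$,~$\mathcal{B}_{F}$, and~$\mathcal{B}_{f,F}$ are all closed under closed subsets, it suffices to show that every closed~$K\subseteq [a,b]$ lies in~$(\mathcal{B}_{f})_{\sigma}$, in~$(\mathcal{B}_{F})_{\sigma}$, and in~$(\mathcal{B}_{f,F})_{\sigma}$, respectively. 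The only genuine inputs are two covering facts already recorded in~\S\ref{Den:Den01.2}; everything else is formal.

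For part~(i), since~$f\in \mathrm{Den}[a,b]$, the fact cited just before the Step Lemma (Gordon~\cite{Gordon1994} Theorem~9.18) furnishes closed sets~$K_{n}$ with~$[a,b]=\bigcup_{n} K_{n}$ and~$f\chi_{K_{n}}\in L^{1}[a,b]$, that is,~$K_{n}\in \mathcal{B}_{f}$. Given an arbitrary closed~$K\subseteq [a,b]$, I would write~$K=\bigcup_{n}(K\cap K_{n})$; each~$K\cap K_{n}$ is closed and contained in~$K_{n}$, hence lies in~$\mathcal{B}_{f}$ by closure under closed subsets. Thus~$K\in (\mathcal{B}_{f})_{\sigma}$ and~$D^{\infty}_{f}(K)=\emptyset$. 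For part~(ii), since~$f\in\mathrm{Den}[a,b]$ and~$F(x)=\int_{a}^{x} f$ with~$F(a)=0$, Theorem~\ref{Den:thm:FTCforDenprior} gives~$F\in ACG_{\ast}([a,b])$, so by Definition~\ref{Den:defn:ab2prior} there are closed~$L_{m}$ with~$[a,b]=\bigcup_{m} L_{m}$ and~$F\in AC_{\ast}(L_{m})$, i.e.\ $L_{m}\in \mathcal{B}_{F}$. The identical argument, writing~$K=\bigcup_{m}(K\cap L_{m})$ with each~$K\cap L_{m}\in \mathcal{B}_{F}$, shows~$K\in (\mathcal{B}_{F})_{\sigma}$ and hence~$D^{\infty}_{F}(K)=\emptyset$.

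For part~(iii), I would simply intersect the two covers. Consider the countable family~$\{K_{n}\cap L_{m}\}_{n,m}$: each~$K_{n}\cap L_{m}$ is closed and is a closed subset of~$K_{n}$, hence lies in~$\mathcal{B}_{f}$, and is a closed subset of~$L_{m}$, hence lies in~$\mathcal{B}_{F}$, so it lies in~$\mathcal{B}_{f,F}=\mathcal{B}_{f}\cap\mathcal{B}_{F}$. Moreover their union is~$\bigl(\bigcup_{n} K_{n}\bigr)\cap\bigl(\bigcup_{m} L_{m}\bigr)=[a,b]$. Intersecting with a given closed~$K$ exactly as before yields a countable cover of~$K$ by members of~$\mathcal{B}_{f,F}$, so~$K\in (\mathcal{B}_{f,F})_{\sigma}$ and~$D^{\infty}_{f,F}(K)=\emptyset$.

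I do not expect any real obstacle here; the proof is entirely a matter of assembling the~$\mathcal{B}_{\sigma}$ criterion, the closure of each family under closed subsets, and the two covering facts. The one point deserving care is that the covering facts are stated for~$[a,b]$ rather than for an arbitrary closed~$K$, so the argument must observe that a countable cover of~$[a,b]$ by members of a subset-closed family descends to a cover of every closed~$K\subseteq[a,b]$ by intersecting with~$K$, and that in part~(iii) the two separate covers can be refined simultaneously by pairwise intersection to land in the combined family~$\mathcal{B}_{f,F}$.
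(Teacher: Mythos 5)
Your proof is correct and follows essentially the same route as the paper's: reduce each part to the criterion $D^{\infty}_{\mathcal{B}}(K)=\emptyset \Leftrightarrow K\in\mathcal{B}_{\sigma}$, invoke the Gordon covering fact for~(i) and Theorem~\ref{Den:thm:FTCforDenprior} (i.e.\ $F\in ACG_{\ast}[a,b]$) for~(ii), and take pairwise intersections $K_{n}\cap L_{m}$ of the two covers for~(iii). The only difference is that you make explicit the descent from the cover of $[a,b]$ to a cover of an arbitrary closed $K$ by intersecting with $K$, a step the paper's proof leaves implicit; this is a welcome clarification, not a divergence.
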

\begin{proof}
For (i), one has~$D_{f}^{\infty}(K)=\emptyset$ if and only if~$K\in (\mathcal{B}_{f})_{\sigma}$, i.e. if there are~$K_{n}\in K[a,b]$ such that~$K=\bigcup_{n} K_{n}$ and~$f\chi_{K_{n}}\in L^{1}[a,b]$. But this happens when~$f\in \mathrm{Den}[a,b]$, as we had occasion to note immediately subsequent to equation~(\ref{eqn:whatwhat2}). (ii) Likewise,~$D_{F}^{\infty}(K)=\emptyset$ if and only if~$K\in (\mathcal{B}_{F})_{\sigma}$, i.e. if there are~$L_{m}\in K[a,b]$ such that~$K=\bigcup_{m} L_{m}$ and~$F\in AC_{\ast}(L_{m})$. But this is just to say that~$F\in ACG_{\ast}[a,b]$, and so this follows immediately from the Fundamental Theorem of Calculus for Denjoy Integrals (cf. Theorem~\ref{Den:thm:FTCforDenprior}). (iii) Now, retaining the closed sets~$K_{n}$ from part~(i) and the closed sets~$L_{m}$ from part~(ii), consider the sequence of closed sets~$C_{n,m}=K_{n}\cap L_{m}$. Then we have that~$K= \bigcup_{n,m} C_{n,m}$. Further, since~$f\chi_{K_{n}}\in L^{1}[a,b]$ and~$F\in AC_{\ast}(L_{m})$, we have that~$f\chi_{C_{n,m}}\in L^{1}[a,b]$ and~$F\in AC_{\ast}(C_{n,m})$. This is just to say that~$K\in (\mathcal{B}_{f,F})_{\sigma}$, so that~$D_{f,F}^{\infty}(K)=\emptyset$.
\end{proof}

Finally, let us close this section by noting that there are Denjoy integrable functions whose derivatives vanish only at arbitrarily high countable ordinals. As mentioned in the introduction, this result is important for the negative parts of Theorems~\ref{thm:mainnew5} and~\ref{Den:cor:main2prior}, which we prove in \S\ref{Den:Den02.3}. The construction in the successor step of the following example is based on the example discussed in Gordon~\cite{Gordon1994}~pp.~117-118, although that discussion does not treat the derivatives~$D_{F}^{\alpha}[a,b]$ introduced above.
\begin{thm}\label{Den:exfactor}
For every~$\alpha<\omega_{1}$ and every~$[a,b]$ and every~$r>0$, there~exists an~$f\in \mathrm{Den}[a,b]$ with~$\int_{a}^{b} f =0$ and~$f(a)=f(b)=0$ and such that the function~$F(x)=\int_{a}^{x} f$ satisfies~$\omega(F, [a,b])=r$ and~$a,b\in D_{F}^{\alpha}([a,b])$. Hence, for all~$\alpha<\omega_{1}$ there exists an~$f\in \mathrm{Den}[a,b]$ such that the function~$F(x)=\int_{a}^{x} f$ satisfies~$\alpha< \left| F\right|\leq \left|f,F\right|$.
\end{thm}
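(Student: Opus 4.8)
The base case and limit case should be relatively soft, with the entire difficulty concentrated in the successor step. For the base case $\alpha=0$, I need an $f\in\mathrm{Den}[a,b]$ with $\int_a^b f=0$, $f(a)=f(b)=0$, and $F(x)=\int_a^x f$ satisfying $\omega(F,[a,b])=r$ while $a,b\in D_F^0([a,b])=[a,b]$, which is automatic. A simple explicit choice works: take $F$ to be a smooth bump (e.g.\ a scalar multiple of a $C^\infty$ function vanishing to high order at $a$ and $b$) with oscillation exactly $r$, and set $f=F'$; since $F$ is continuously differentiable it is certainly in $AC_\ast([a,b])\subseteq ACG_\ast([a,b])$, so $f$ is Denjoy integrable with the required indefinite integral, and $f(a)=f(b)=F'(a)=F'(b)=0$ can be arranged by the order of vanishing.

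\emph{The successor step is the heart of the construction.} Suppose the result holds for $\alpha$; I want it for $\alpha+1$ on an arbitrary interval $[a,b]$ with arbitrary target oscillation $r>0$. Following the indicated model in Gordon, the idea is to subdivide: choose a sequence of disjoint closed subintervals $[c_n,d_n]\subseteq(a,b)$ accumulating at one or both endpoints, and on each $[c_n,d_n]$ place a scaled copy of a rank-$\alpha$ example $f_n$, with its own oscillation $r_n$ chosen so that the $r_n$ are summable and total to something controlling $r$. Between the intervals $f$ is set to $0$, so $f\chi_K\in L^1[a,b]$ on the complementary closed set $K=[a,b]\setminus\bigsqcup_n(c_n,d_n)$ (indeed $f\chi_K=0$ there). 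I then invoke the Step Lemma (Lemma~\ref{Den:Den03:lem:leb}, in its $AC_\ast$-variant) to glue these into a single $f\in\mathrm{Den}[a,b]$: the summability condition $\sum_n\omega(\int_{c_n}^x f,[c_n,d_n])=\sum_n r_n<\infty$ is exactly what the lemma requires, and $\int_a^b f$ can be forced to $0$ by balancing signs of the blocks or adding a single compensating $L^1$ term. The global indefinite integral $F$ is then continuous, agrees with the scaled rank-$\alpha$ primitives on each $[c_n,d_n]$, and is constant on the gaps.

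\emph{The crux is the rank computation, namely that $a,b\in D_F^{\alpha+1}([a,b])$.} Here I exploit the formula~(\ref{eqn:mymyd2}): a point $x$ lies in $D_F(L)$ iff $F\notin AC_\ast([c,d]\cap L)$ for every rational window $(c,d)\ni x$. The key monotonicity property of the derivative and the inductive hypothesis give $c_n,d_n\in D_{F}^{\alpha}([a,b])$ for each $n$, because near each $[c_n,d_n]$ the function $F$ looks (after affine rescaling) exactly like a rank-$\alpha$ example, and I must check that the rescaling and the surrounding zero-stretches do not lower the rank --- this is where one uses that the derivative $D_F$ depends only on the \emph{local} behavior of $F$ and that $AC_\ast$ is invariant under the affine change of variables used to place the copies. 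Thus $D_F^{\alpha}([a,b])$ contains all the endpoints $c_n,d_n$, which accumulate at $a$ (and/or $b$). Then any window $(c,d)\ni a$ contains infinitely many whole blocks, on which $F$ fails $AC_\ast$, so $F\notin AC_\ast([c,d]\cap D_F^\alpha([a,b]))$; hence $a\in D_F(D_F^\alpha([a,b]))=D_F^{\alpha+1}([a,b])$, and symmetrically for $b$. Controlling $\omega(F,[a,b])=r$ exactly is a bookkeeping matter: the gaps contribute nothing to oscillation, so $\omega(F,[a,b])$ is governed by the partial sums of the signed block-oscillations, which I can tune to hit $r$ on the nose by a final rescaling.

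\emph{The limit case} is the easiest conceptually but requires care with the intersection at limit stages. For $\alpha$ a limit ordinal, I pick an increasing sequence $\alpha_k\nearrow\alpha$ and again subdivide $[a,b]$ into blocks $[c_k,d_k]$ accumulating at $a$, placing on the $k$-th block a rank-$\alpha_k$ example with summable oscillations $r_k$, gluing via the Step Lemma exactly as above. Since $d_k\in D_F^{\alpha_k}([a,b])\supseteq D_F^{\alpha}([a,b])$-neighbors and the endpoints $c_k,d_k$ accumulate at $a$, one checks using $D_F^\alpha=\bigcap_{\beta<\alpha}D_F^\beta$ that $a\in D_F^{\beta}([a,b])$ for every $\beta<\alpha$, hence $a\in D_F^\alpha([a,b])$, giving $\alpha<|F|$. \emph{The main obstacle I anticipate} is the successor-step verification that placing a scaled rank-$\alpha$ copy inside a subinterval genuinely preserves its $D_F$-rank relative to the \emph{ambient} interval: one must confirm that the local derivative computation is insensitive both to the affine rescaling of domain and range and to the fact that $F$ is now embedded with flat stretches on either side, and that no \emph{additional} cancellation across blocks accidentally restores $AC_\ast$ and collapses the rank. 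The final clause $\alpha<|F|\le|f,F|$ then follows immediately, since $a\in D_F^\alpha([a,b])$ forces $D_F^\alpha([a,b])\ne\emptyset$ whence $\alpha<|F|$, and $\mathcal{B}_{f,F}=\mathcal{B}_f\cap\mathcal{B}_F\subseteq\mathcal{B}_F$ gives $D_{f,F}^\alpha\supseteq D_F^\alpha$ by monotonicity, hence $|F|\le|f,F|$.
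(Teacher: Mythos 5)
Your skeleton (transfinite induction, scaled lower-rank copies placed on blocks accumulating at the endpoints, locality of the derivative, closedness of derived sets at limit stages) matches the paper's, and your base case, limit case, and final clause are fine. But the successor step — which you yourself flag as the main obstacle — has a genuine gap, and in fact the construction as you describe it provably fails. First, take the very first instance $\alpha=0\to\alpha=1$: your blocks carry smooth bumps of oscillation $r_n$ with $\sum_n r_n<\infty$, so $\|f\chi_{[c_n,d_n]}\|_{L^1}=\mathrm{Var}(F_n)\approx 2r_n$ is summable, hence $f\in L^1[a,b]$ and $F=\int_a^x f$ is absolutely continuous on all of $[a,b]$; then $D_F([a,b])=\emptyset$ and $a\notin D_F^1([a,b])$ — the rank does not increase at all. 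Second, the logical step ``any window contains infinitely many whole blocks on which $F$ fails $AC_\ast$, so $F\notin AC_\ast([c,d]\cap D_F^\alpha([a,b]))$'' runs the monotonicity of $AC_\ast$ backwards: if $E'\subseteq E$ then $AC_\ast(E)\Rightarrow AC_\ast(E')$, so failure transfers from \emph{subsets} to supersets, whereas a block $[c_n,d_n]$ is not a subset of $[c,d]\cap D_F^\alpha([a,b])$. The pre-partitions witnessing failure of $AC_\ast$ on a whole block have edges scattered through the block, generally \emph{not} in $D_F^\alpha([a,b])$, and so are inadmissible for testing $AC_\ast$ on the derived set. What the definition (\ref{eqn:mymyd2}) demands is: for some fixed $\epsilon>0$ and every $\delta>0$, a pre-partition with edges in $D_F^\alpha([a,b])$, of total length $<\delta$, whose oscillation sum exceeds $\epsilon$; your induction hypothesis (only that the block endpoints lie in the derived set) gives no lower bound on such sums, and with summable $r_n$ the sums over far-out blocks tend to $0$.

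The paper's resolution is precisely the idea your one-level design lacks: a \emph{two-level} Cantor construction that decouples the total oscillation of a block from the oscillation sum over derived-set-edged subintervals inside it. Each outer gap $[c_n,d_n]$ is itself subdivided by a Cantor set into sub-blocks $[c_{nm},d_{nm}]$ carrying rank-$\beta$ copies with $\int_{c_{nm}}^{d_{nm}}f_{nm}=0$ and oscillations tuned so that, per equation (\ref{Den:theequationstar}), $\sum_m \omega(F_{nm},[c_{nm},d_{nm}])=1$ for every $n$, while the vanishing sub-block integrals force $\omega(F_n,[c_n,d_n])\leq 2\cdot 2^{-n}$ — summable, so the Step Lemma~\ref{Den:Den03:lem:leb} still glues. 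Since the sub-block endpoints $c_{nm},d_{nm}$ lie in $D_F^\beta([a,b])$ by induction and locality, any window meeting the outer Cantor set contains, for every $\delta>0$, a $D_F^\beta([a,b])$-edged pre-partition of length $<\delta$ with oscillation sum $>\tfrac12$, which is exactly the witness the successor step needs. Keeping the internal oscillation sum pinned at $1$ (not tending to $0$) while the block oscillations are summably small is the essential point; no choice of the $r_n$ in a single-level decomposition can satisfy both requirements simultaneously.
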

\begin{proof}
Suppose that~$\alpha=0$. Let~$f(x)=\sin(2\pi (b-a)^{-1} (x-a))$. Since \mbox{}$\omega(F,[a,b])=\frac{b-a}{\pi}>0$ where ~$F(x)=\int_{a}^{x} f$, to ensure that for any~$r>0$ we can obtain~$\omega(F,[a,b])=r$, simply multiply~$f$ by an appropriate constant. 

Suppose now that~$\alpha=\beta+1$. Let~$C$ be the Cantor~$\nicefrac{1}{3}$-set on~$[a,b]$ and let~$(a,b)-C=\bigsqcup_{n>0} (c_{n}, d_{n})$ and let~$C_{n}$ the Cantor~$\nicefrac{1}{3}$-set on~$[c_{n}, d_{n}]$ and let~$(c_{n}, d_{n})-C_{n} = \bigsqcup_{m>0} (c_{nm}, d_{nm})$. Choose~$f_{nm}\in \mathrm{Den}[c_{nm},d_{nm}]$ with~$F_{nm}(x)=\int_{c_{nm}}^{x} f_{nm}$ and~$\int_{c_{nm}}^{d_{nm}} f_{nm}=0$ and ~$f_{nm}(c_{nm})=f_{nm}(d_{nm})=0$ and~$c_{nm},d_{nm}\in D_{F_{nm}}^{\beta}([c_{nm},d_{nm}])$ and~$\omega(F_{nm}, [c_{nm},d_{nm}])=2^{-n}$ in the case that~$m<2^{n}$, while $\omega(F_{nm}, [c_{nm},d_{nm}])=2^{-n}2^{-m+2^{n}-1}$ in the case that $m\geq 2^n$. Then by fixing~$n$ we have
\begin{equation}\label{Den:theequationstar}
\sum_{m>0} \omega(F_{nm}, [c_{nm}, d_{nm}])= (2^{n}-1)2^{-n}+2^{-n} \sum_{m\geq 2^{n}} 2^{-m+2^{n}-1} = 1
\end{equation}
Still fixing~$n$, let~$f_{n} = f_{nm}$ on~$[c_{nm}, d_{nm}]$ and~$f_{n}=0$ otherwise, so that~$f_{n}\in \mathrm{Den}[c_{n},d_{n}]$ with~$\int_{c_{n}}^{d_{n}} f_{n} = 0$ by the Step Lemma~\ref{Den:Den03:lem:leb}, and set~$F_{n}(x)=\int_{c_{n}}^{x} f_{n}$. Fixing~$n$ for the remainder of the paragraph, we claim that we have~$\omega(F_{n}, [c_{n}, d_{n}])\leq 2\cdot 2^{-n}$. For, let~$\epsilon>0$ and let~$[x,y]\subseteq [c_{n}, d_{n}]$. Since~$F_{n}$ is continuous, choose~$\delta>0$ such that~$0<u-x<\delta$ implies~$\left| \int_{x}^{u} f_{n}\right|<\frac{\epsilon}{2}$ and such that~$0<y-v<\delta$ implies~$\left| \int_{v}^{y} f_{n}\right|<\frac{\epsilon}{2}$. Choose~$u,v\notin C_{n}$ such that ~$c_{n}\leq x<u<v< y\leq d_{n}$ and~$0<u-x<\delta$ and~$0<y-v<\delta$. If~$[u,v]\subseteq [c_{nm}, d_{nm}]$ then~$\left| \int_{u}^{v} f_{n} \right| \leq \omega(F_{nm}, [c_{nm}, d_{nm}]) \leq 2^{-n}$ and hence~$\left| \int_{x}^{y} f_{n}\right| \leq \epsilon +2^{-n}$. Otherwise, we have that~$c_{n\ell} \leq u\leq d_{n\ell}<c_{nm}\leq v \leq d_{nm}$, and then estimating as before we have
$\left| \int_{x}^{y} f_{n} \right| \leq \epsilon+2\cdot 2^{-n} + \left| \int_{d_{n\ell}}^{c_{nm}} f_{n}\right|$, and so it suffices to show that~$\int_{d_{n\ell}}^{c_{nm}} f_{n}=0$, which follows as above from the Step Lemma~\ref{Den:Den03:lem:leb}. Hence we have in fact shown that, fixing~$n$, we have~$\omega(F_{n}, [c_{n}, d_{n}])\leq 2\cdot 2^{-n}$.

This of course implies that~$\sum_{n>0} \omega(F_{n}, [c_{n}, d_{n}])\leq \sum_{n>0} 2\cdot 2^{-n} \leq 2$, and so letting~$f=f_{n}$ on~$[c_{n},d_{n}]$ and~$f=0$ otherwise, we have that~$f\in \mathrm{Den}[a,b]$ with~$\int_{a}^{b} f=0$ by the Step Lemma~\ref{Den:Den03:lem:leb}. Now set~$F(x)=\int_{a}^{x}f$. To see that~$a,b\in D_{F}^{\alpha}([a,b])$, note that by hypothesis~$c_{nm}, d_{nm}\in D^{\beta}_{F_{nm}}([c_{nm}, d_{nm}])$ and hence~$c_{nm}, d_{nm}\in D^{\beta}_{F}([a,b])$, since
\begin{equation}
D^{\beta}_{F_{nm}}([c_{nm}, d_{nm}])=D^{\beta}_{F\upharpoonright [c_{nm}, d_{nm}]}([c_{nm}, d_{nm}])\subseteq D^{\beta}_{F}([a,b])
\end{equation}
Since a subsequence of the~$c_{nm}$ converge to~$c_{n}$ and since a subsequence of the~$d_{nm}$ converge to~$d_{n}$ we have that~$c_{n}, d_{n }\in D^{\beta}_{F}([a,b])$. Then we claim that~$c_{n}, d_{n} \in D_{F}(D^{\beta}_{F}([a,b]))$. We give the argument for $c_n$ since the argument for $d_n$ is similar. For, if $c_{n}\notin D_{F}(D^{\beta}_{F}([a,b]))$ then there is open~$U\ni c_{n}$ such that~$F\in AC_{\ast}(\overline{U\cap  D^{\beta}_{F}([a,b])})$ and hence~$F\in AC_{\ast}(U\cap  D^{\beta}_{F}([a,b]))$. Then choose~$\delta>0$ corresponding to~$\epsilon=\frac{1}{2}$. Since~$U$ is open and intersects~$C$, we have that~$U$ contains infinitely many intervals~$(c_{\ell}, d_{\ell})$ and so an interval~$(c_{\ell}, d_{\ell})$ with length~$d_{\ell}-c_{\ell}<\delta$. Applying equation~(\ref{Den:theequationstar}) with $n$ set equal to $\ell$, choose a finite sequence~$(c_{\ell 1}, d_{\ell 1}), \ldots, (c_{\ell M}, d_{\ell M})$ such that~$\sum_{m=1}^{M} \omega(F_{\ell m}, [c_{\ell m }, d_{\ell m}])>\frac{1}{2}$. But this is a contradiction, since \;$(c_{\ell 1}, d_{\ell 1})$, $\ldots$, $(c_{\ell M}, d_{\ell M})$  is a~$U\cap D^{\beta}_{F}([a,b])$-edged pre-partition with~$\sum_{m=1}^{M} d_{\ell m}- c_{\ell m} \leq d_{\ell}-c_{\ell}<\delta$. Hence in fact~$c_{n}, d_{n} \in D_{F}(D^{\beta}_{F}([a,b]))$ for all~$n$ which of course implies that~$a,b\in D_{F}(D^{\beta}_{F}([a,b]))= D^{\alpha}_{F}([a,b])$, since there is a subsequence of the~$c_{n}$ converging to~$a$ and likewise a subsequence of the~$d_{n}$ converging to~$b$. 

Suppose that~$\alpha<\omega_{1}$ is a limit ordinal. Let~$\alpha_{n}$ be an enumeration of the ordinals less than~$\alpha$. Let~$w$ be the midpoint of~$[a,b]$. Choose~$u_{n}\searrow a^{+}$ from above with~$u_{0}=w$ and~$v_{n}\nearrow b^{-}$ from below with~$v_{0}=w$. Choose~$h:\omega\rightarrow \omega$ such that~$h^{-1}(n)$ is infinite for all~$n$. Choose~$f_{n}\in \mathrm{Den}[u_{n+1},u_{n}]$ with~$F_{n}(x)=\int_{u_{n+1}}^{x} f_{n}$ and~$\int_{u_{n+1}}^{u_{n}} f =0$ and~$f(u_{n+1})=f(u_{n})=0$ and~$u_{n+1},u_{n}\in D_{F_{n}}^{\alpha_{h(n)}}([u_{n+1},u_{n}])$ and~$\omega(F_{n}, [u_{n+1},u_{n}])=\frac{1}{n}$. Likewise, choose~$g_{n}\in \mathrm{Den}[v_{n},v_{n+1}]$ with~$G_{n}(x)=\int_{v_{n}}^{x} g_{n}$ and~$\int_{v_{n}}^{v_{n+1}} g_{n} =0$ and~$g_{n}(v_{n})=g_{n}(v_{n+1})=0$ and~$v_{n+1}, v_{n}\in D_{G_{n}}^{\alpha_{h(n)}}([v_{n},v_{n+1}])$ and~$\omega(G_{n}, [v_{n},v_{n+1}])=\frac{1}{n}$. 

Let~$f=f_{n}$ on~$[u_{n+1}, u_{n}]$ and~$f=g_{n}$ on~$[v_{n}, v_{n+1}]$ and~$f(a)=f(b)=0$. Since \;$\omega(F_{n}, [u_{n+1},u_{n}])=\omega(G_{n}, [v_{n},v_{n+1}])=\frac{1}{n}$, we claim that~$f\in \mathrm{Den}[a,b]$ with~$\int_{a}^{b} f = 0$ by the Improper Integrals Lemma~\ref{Den:Den03:lem:imp}. For, to apply this lemma, it must be shown that~$\lim_{c\searrow a^{+}} \int_{c}^{w} f$ and~$\lim_{c\nearrow b^{-}} \int_{w}^{c} f$ exist and are equal to zero, where recall that~$w$ is the midpoint of~$[a,b]$. Without loss of generality, consider the case of~$\lim_{c\searrow a^{+}} \int_{c}^{w} f$. Let~$\epsilon>0$. Choose~$N$ such that~$\frac{1}{N}<\epsilon$ and set~$\delta=u_{N}-a$. Suppose that~$0<c-a<\delta$, so that~$a<c<u_{N}$. Let~$n\geq N$ such that~$a<u_{n+1}\leq c< u_{n}\leq u_{N}$. Since~$\omega(F_{n}, [u_{n+1},u_{n}])=\frac{1}{n}$ and~$\int_{u_{i+1}}^{u_{i}} f =0$ , it follows that $\left|\int_{c}^{w} f\right|\leq \left|\int_{c}^{u_{n}} f\right| + \sum_{i=0}^{n-1} \left| \int_{u_{n-i}}^{u_{n-i-1}} f\right|  \leq \frac{1}{n} + 0 \leq \frac{1}{N}<\epsilon$. Hence, in fact~$f\in \mathrm{Den}[a,b]$ with~$\int_{a}^{b} f = 0$ by the Improper Integrals Lemma~\ref{Den:Den03:lem:imp}, and so we define~$F(x)=\int_{a}^{x} f$. 

To show that~$a\in D^{\alpha}_{F}([a,b])$, it suffices to show that~$a\in D^{\alpha_{n}}_{F}([a,b])$ for all~$n$. So, fixing~$n$ and recalling that~$h^{-1}(n)$ is infinite, choose sequence~$u_{n_{k}}\searrow a^{+}$ from above such that~$u_{n_{k}}\in D_{F_{n_{k}}}^{\alpha_{n}}([u_{n_{k}+1},u_{n_{k}}])$.  Since ~$u_{n_{k}}\in D_{F_{n_{k}}}^{\alpha_{n}}([u_{n_{k}+1},u_{n_{k}}])$ and~$D_{F_{n_{k}}}^{\alpha_{n}}([u_{n_{k}+1},u_{n_{k}}])=D_{F\upharpoonright [u_{n_{k}+1},u_{n_{k}}]}^{\alpha_{n}}([u_{n_{k}+1},u_{n_{k}}]) \subseteq D_{F}^{\alpha_{n}}([a,b])$, it follows that~$u_{n_{k}}\in D_{F}^{\alpha_{n}}([a,b])$. Since~$u_{n_{k}}\searrow a^{+}$ from above, it follows that~$a \in D_{F}^{\alpha_{n}}([a,b])$. Since the~$\alpha_{n}$ enumerated the ordinals below the limit ordinal~$\alpha$, it follows that~$a\in D_{F}^{\alpha}([a,b])$. An analogous argument shows that ~$b\in D^{\alpha}_{F}([a,b])$.  
\end{proof}

\section{Calibrating Rank and Entry into Subspaces}\label{Den:Den02.2}

In the last section, we defined three derivatives whose vanishing is related to how far one is from satisfying the Fundamental Theorem of Calculus for Lebesgue Integrals (Theorem~\ref{Den:thm:FTCforLeb2prior}). In \S\ref{Den:Den01.2} we defined the sequence of subsets $\mathrm{Den}_{\alpha}[a,b]$ which record how long it takes to define the Denjoy integral in terms of Lebesgue integration and improper integrals. The main result of this section, Theorem~\ref{Den:thm:mymainman}, calibrates entry into the subsets $\mathrm{Den}_{\alpha}[a,b]$ with the vanishing of the derivatives. From this theorem, we obtain Corollary~\ref{Den:thm:myFTC} which presents an equivalent characterization of Denjoy integration in terms of the vanishing of derivatives, or equivalently entry into the subsets $\mathrm{Den}_{\alpha}[a,b]$. The equivalent characterization in terms of the vanishing of the derivatives is what we use in the next subsection to obtain our main results on the descriptive set theory complexity of Denjoy integration. We begin with two preliminary propositions.

\begin{prop}\label{Den:prop:Dendec07:1}
Suppose~$f\in M[a,b]$ and~$K\in K[a,b]$. (i) If~$D_{f}(K)=\emptyset$ then~$f\chi_{K} \in L^{1}[a,b]$. (ii) Further, $(p,q)\cap D_{f}(K)=\emptyset$ iff for all rational~$[r,s]\subseteq (p,q)$ it is the case that~$f\chi_{[r,s]\cap K}\in L^{1}[a,b]$. 
\end{prop}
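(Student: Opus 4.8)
The plan is to read everything off the definition~(\ref{eqn:mymyd1}) of~$D_f(K)$, which says that a point~$x\in K$ fails to lie in~$D_f(K)$ exactly when some open interval~$(c,d)\ni x$ satisfies~$f\chi_{[c,d]\cap K}\in L^{1}[a,b]$, and that one is free to take~$c,d$ rational. The only tool needed beyond this is the compactness of closed subsets of the compact interval~$[a,b]$, together with the elementary fact that a measurable function dominated by a finite sum of~$L^{1}$ functions is itself in~$L^{1}$.

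For part~(i), I would argue as follows. Since~$D_f(K)=\emptyset$, every~$x\in K$ admits a rational open interval~$(c_x,d_x)\ni x$ with~$f\chi_{[c_x,d_x]\cap K}\in L^{1}[a,b]$. The collection~$\{(c_x,d_x):x\in K\}$ is an open cover of the compact set~$K$, so it has a finite subcover~$(c_1,d_1),\dots,(c_N,d_N)$. Because~$K\subseteq\bigcup_{i=1}^{N}(c_i,d_i)$, one has the pointwise inequality~$\chi_K\leq\sum_{i=1}^{N}\chi_{[c_i,d_i]\cap K}$, whence~$\int|f|\chi_K\leq\sum_{i=1}^{N}\int|f|\chi_{[c_i,d_i]\cap K}<\infty$, and so~$f\chi_K\in L^{1}[a,b]$.

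For the forward direction of~(ii), the same compactness argument applies verbatim with the compact set~$[r,s]\cap K$ in place of~$K$: if~$(p,q)\cap D_f(K)=\emptyset$ and~$[r,s]\subseteq(p,q)$, then every point of~$[r,s]\cap K$ lies in~$(p,q)$ and hence outside~$D_f(K)$, so it has a witnessing neighborhood; extracting a finite subcover of~$[r,s]\cap K$ and dominating as above yields~$f\chi_{[r,s]\cap K}\in L^{1}[a,b]$. For the backward direction, I would take an arbitrary~$x\in(p,q)$ and choose rationals~$r,s$ with~$p<r<x<s<q$; then~$(r,s)$ is an open interval containing~$x$, while~$[r,s]\subseteq(p,q)$ is a rational closed interval, so the hypothesis gives~$f\chi_{[r,s]\cap K}\in L^{1}[a,b]$. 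By the definition~(\ref{eqn:mymyd1}) this single witness already shows~$x\notin D_f(K)$, and since~$x$ was arbitrary we conclude~$(p,q)\cap D_f(K)=\emptyset$.

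There is no serious obstacle here; the whole argument is a matter of compactness plus domination. The only points requiring care are bookkeeping ones: keeping straight the distinction between the open interval~$(c,d)$ used to test membership in~$D_f(K)$ and the closed interval~$[c,d]$ appearing inside the characteristic function, and ensuring in the backward direction of~(ii) that the chosen rational interval~$[r,s]$ sits strictly inside~$(p,q)$ so that its open interior still contains~$x$. I note finally that part~(i) can alternatively be recovered from~(ii) by choosing a rational interval~$[r,s]\supseteq[a,b]$ inside a sufficiently large~$(p,q)$, since then~$[r,s]\cap K=K$; but the direct compactness argument above is equally short.
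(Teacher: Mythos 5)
Your proof is correct and follows essentially the same route as the paper: part (i) is exactly the paper's covering-compactness argument (with the domination inequality $\chi_K\leq\sum_i\chi_{[c_i,d_i]\cap K}$ spelled out, which the paper leaves as ``of course''), and in (ii) the forward direction is the same compactness argument applied to $[r,s]\cap K$ --- which is precisely how the paper's ``follows immediately from (i)'' unpacks --- while the backward direction reads off the definition in~(\ref{eqn:mymyd1}) just as the paper does. Your bookkeeping on rational endpoints and on placing $[r,s]$ strictly inside $(p,q)$ is exactly right, so there is nothing to add.
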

\begin{proof}
For (i), note that by (\ref{eqn:mymyd1}), if~$D_{f}(K)=\emptyset$ then for every~$x\in K$ there is open interval $(a_x, b_x)\ni x$ such that~$f\chi_{[a_x,b_x]\cap K}\in L^{1}[a,b]$. By the compactness of~$K$, there is a finite subcovering of $K$ by such intervals~$(a_1, b_1), \ldots, (a_N, b_N)$. Then of course $f\chi_{K}\in L^{1}[a,b]$.  For (ii), the left-to-right direction follows immediately from (i), while the right-to-left direction follows directly from the equivalent characterization of $D_f(K)$ in equation~(\ref{eqn:mymyd1}).
\end{proof}

\begin{prop}\label{Den:prop:Dendec07:2}
Suppose~$F\in C[a,b]$ and~$K\in K[a,b]$. (i) If~$D_{F}(K)=\emptyset$ then~$F\in AC_{\ast}(K)$.
 (ii) Further, $(p,q)\cap D_{F}(K)=\emptyset$ iff for all rational~$[r,s]\subseteq (p,q)$ it is the case that~$F\in AC_{\ast}([r,s]\cap K)$.
\end{prop}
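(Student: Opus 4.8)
The plan is to follow the template of Proposition~\ref{Den:prop:Dendec07:1}, replacing local Lebesgue integrability of~$f$ by local absolute continuity in the restricted sense of~$F$, and replacing the trivial ``a finite sum of $L^{1}$ functions is $L^{1}$'' step by the corresponding gluing property for $AC_{\ast}$. For part~(i), suppose $D_{F}(K)=\emptyset$. By the equivalent characterization~(\ref{eqn:mymyd2}), for every $x\in K$ there is an open interval $(a_{x},b_{x})\ni x$ with $F\in AC_{\ast}([a_{x},b_{x}]\cap K)$. By compactness of~$K$ there is a finite subcover. Taking as division points $a=t_{0}<t_{1}<\cdots<t_{m}=b$ the endpoints of these finitely many covering intervals (together with $a,b$), each open subinterval $(t_{j-1},t_{j})$ is either disjoint from or contained in each covering interval, so that $E_{j}:=[t_{j-1},t_{j}]\cap K$ is a closed subset of some $[a_{x},b_{x}]\cap K$; since $\mathcal{B}_{F}$ is closed under closed subsets, $F\in AC_{\ast}(E_{j})$ for each~$j$, and $K=\bigcup_{j=1}^{m}E_{j}$. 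It then remains to glue these finitely many pieces into the single conclusion $F\in AC_{\ast}(K)$.

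This gluing is the one place where the argument genuinely departs from the $f$-case, and it is the main obstacle. For~$f$ one simply writes $f\chi_{K}$ as a finite sum of the $L^{1}$ functions $f\chi_{E_{j}}$; for~$F$ the difficulty is that a $K$-edged pre-partition interval $J=[c,d]$ may straddle one of the division points $t_{j}$, so that neither endpoint need lie in a single~$E_{j}$. The fact that a finite union of closed sets on which $F$ is $AC_{\ast}$ is again a set on which $F$ is $AC_{\ast}$ is standard (it is part of Gordon~\cite{Gordon1994} Theorem~6.2, pp.~90--91), but I would make the straddling estimate explicit as follows. Given $\epsilon>0$, choose $\delta_{j}>0$ witnessing $AC_{\ast}(E_{j})$ for the tolerance $\frac{\epsilon}{2m}$, choose $\delta_{0}>0$ from the uniform continuity of $F$ on $[a,b]$ so that $\lvert x-y\rvert<\delta_{0}$ forces $\lvert F(x)-F(y)\rvert<\frac{\epsilon}{4m}$, and set $\delta=\min(\delta_{0},\delta_{1},\ldots,\delta_{m},\tfrac{1}{2}\min_{j}(t_{j}-t_{j-1}))$. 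Because $\delta$ is smaller than every subinterval length, any pre-partition interval of length $<\delta$ contains at most one division point in its interior; one with none lies inside a single $[t_{j-1},t_{j}]$ and is $E_{j}$-edged, while a straddling one contains exactly one $t_{p}$ in its interior. Since the pre-partition intervals are non-overlapping and each $t_{p}$ is interior to at most one of them, there are at most $m-1$ straddling intervals. For such a $J=[c,d]$, put $c'=\max(K\cap[c,t_{p}])\in E_{p}$ and $d'=\min(K\cap[t_{p},d])\in E_{p+1}$; by subadditivity of the oscillation, $\omega(F,[c,d])\le\omega(F,[c,c'])+\omega(F,[c',d'])+\omega(F,[d',d])$, which splits $J$ into an $E_{p}$-edged piece, an $E_{p+1}$-edged piece, and a gap $[c',d']\subseteq J$ of length $<\delta_{0}$ contributing less than $\frac{\epsilon}{4m}$. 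Collecting all $E_{p}$-edged pieces (which remain non-overlapping inside $[t_{p-1},t_{p}]$ with total length $<\delta\le\delta_{p}$) and applying $AC_{\ast}(E_{p})$ for each~$p$ bounds their total oscillation by $m\cdot\frac{\epsilon}{2m}$, while the at most $m-1$ gaps contribute less than $\frac{\epsilon}{4}$; summing yields $\sum_{J}\omega(F,J)<\epsilon$, so $F\in AC_{\ast}(K)$.

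For part~(ii), the argument is exactly parallel to Proposition~\ref{Den:prop:Dendec07:1}(ii). For the left-to-right direction, assume $(p,q)\cap D_{F}(K)=\emptyset$ and fix rational $[r,s]\subseteq(p,q)$; writing $K'=[r,s]\cap K$, every point $x\in K'$ lies in $(p,q)$ and hence outside $D_{F}(K)$, so by~(\ref{eqn:mymyd2}) there is $(c,d)\ni x$ with $F\in AC_{\ast}([c,d]\cap K)$, whence $F\in AC_{\ast}([c,d]\cap K')$ and thus $x\notin D_{F}(K')$. Therefore $D_{F}(K')=\emptyset$, and part~(i) applied to $K'$ gives $F\in AC_{\ast}([r,s]\cap K)$. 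For the right-to-left direction, fix $x\in(p,q)$ and choose a rational interval with $x\in(r,s)$ and $[r,s]\subseteq(p,q)$; by hypothesis $F\in AC_{\ast}([r,s]\cap K)$, so taking $(c,d)=(r,s)$ in the characterization~(\ref{eqn:mymyd2}) witnesses $x\notin D_{F}(K)$. Hence $(p,q)\cap D_{F}(K)=\emptyset$, as desired.
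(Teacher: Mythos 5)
Your proof is correct, and it shares the paper's skeleton—compactness gives a finite subcover on which $F$ is locally $AC_{\ast}$, the tolerance $\epsilon$ is split among the finitely many pieces, and a given $K$-edged pre-partition is regrouped into pre-partitions edged in the individual pieces; part (ii) is handled identically in both. The genuine difference lies in the crux of part (i): the intervals that straddle more than one piece. The paper works directly with the covering sets $[a_i,b_i]\cap K$, fixes $\eta>0$ below every nonzero $\left|a_i-b_j\right|$ and $\left|b_i-a_j\right|$, and argues that any straddling interval of length $<\eta$ splits into \emph{two} non-overlapping subintervals, each edged in a single covering set, so that the whole pre-partition is absorbed into the $N$ separate $AC_{\ast}$ estimates with tolerance $\epsilon/N$. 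You instead pass to the consecutive blocks $E_j=[t_{j-1},t_j]\cap K$ cut out by the division points, observe that a straddling interval contains exactly one interior division point—so there are at most $m-1$ of them—and dispose of them via the uniform continuity of $F$ on $[a,b]$. What your route buys is that you never need the paper's splitting claim, which is the delicate point of its proof (it implicitly requires a point of $K$, or a coincidence of cover endpoints, to be available inside the straddling interval); the cost, an appeal to uniform continuity, is free since $F\in C[a,b]$ and $[a,b]$ is compact. Two small observations: first, your three-way split of a straddling interval is redundant—once the count of at most $m-1$ is in hand, uniform continuity bounds $\omega(F,J)$ for each \emph{whole} straddling interval $J$, and the cardinality bound is the only essential point, since uniform continuity can never control a sum over unboundedly many intervals; second, your parenthetical citation of Gordon's Theorem~6.2 for ``$AC_{\ast}$ is preserved under finite unions'' carries no weight in the argument (your explicit estimate replaces it), which is just as well, because that union statement is not the same elementary fact as the subset/density clauses the paper draws from that theorem—it is true for closed sets and continuous $F$, but it needs an argument of exactly the kind you give.
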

\begin{proof}
For (i), note that by (\ref{eqn:mymyd2}), if~$D_{F}(K)=\emptyset$ then for every~$x\in K$ there is~$(a_{x}, b_{x})\ni x$ such that~$F\in AC_{\ast}([a_{x}, b_{x}]\cap K)$. By the compactness of~$K$, there is a finite subcovering~$(a_{1}, b_{1}), \ldots, (a_{N}, b_{N})$ of $K$ such that~$F\in AC_{\ast}([a_{i}, b_{i}] \cap K)$. Let~$\eta>0$ be strictly less than all the nonzero~$\left| a_{i}-b_{j}\right|$,~$\left| b_{i}-a_{j}\right|$ for~$i\neq j$. Let~$\epsilon>0$ and choose~$\delta_{i}>0$ such that  for every~$[a_{i}, b_{i}]\cap K$-edged pre-partition~$\mathcal{D}$ of~$[a,b]$ if~$\sum_{J\in \mathcal{D}} \mu(J)<\delta_{i}$ then~$\sum_{J\in \mathcal{D}} \omega(F, J) <N^{-1}\cdot \epsilon$. Choose~$\delta>0$ such that~$\delta<\delta_{i}$ for each $i$ as well as~$\delta< \eta$. 

Suppose that~$\mathcal{D}$ is an~$K$-edged pre-partition of~$[a,b]$ with~$\sum_{J\in \mathcal{D}} \mu(J)<\delta$. Note that requiring $\delta<\eta$ implies that if some closed interval~$J\in \mathcal{D}$ is not~$[a_{j}, b_{j}] \cap K$-edged for any~$j$, then there are non-overlapping closed intervals~$I_{J},L_{J}$ such that~$J=I_{J}\cup L_{J}$ and~$I_{J}$ is~$[a_{i}, b_{i}] \cap K$-edged and~$L_{J}$ is~$[a_{k}, b_{k}] \cap K$-edged for some~$i\neq k$. Let~$\mathcal{K}$ be a~$K$-edged pre-partition of~$[a,b]$ which (i)~contains~$J$ where~$J\in \mathcal{D}$ is an~$[a_{j}, b_{j}]\cap K$-edged for some~$j$, and which (ii)~contains~$I_{J}, L_{J}$ where~$J\in \mathcal{D}$ is not~$[a_{j}, b_{j}]\cap K$-edged for any~$j$. Then for every~$J\in \mathcal{K}$ there is some~$j$ such that~$J$ is~$[a_{j}, b_{j}]\cap K$-edged. Let~$\mathcal{K}_{j}$ be an~$[a_{j}, b_{j}]\cap K$-edged pre-partition of~$[a,b]$ which consists of those~$J\in \mathcal{K}$ such that~$J$ is~$[a_{j}, b_{j}]\cap K$-edged. Then~$\mathcal{K}_{j}$ is an~$[a_{j}, b_{j}]\cap K$-edged pre-partition of~$[a,b]$ such that~$\sum_{J\in \mathcal{K}_{j}} \mu(J)<\delta<\delta_{j}$, so that~$\sum_{J\in \mathcal{K}_{j}} \omega(F, J) <N^{-1}\cdot \epsilon$. Then we have~$\sum_{J\in \mathcal{D}} \omega(F,J)\leq \sum_{J\in \mathcal{K}} \omega(F, J) =\sum_{j=1}^{N} \sum_{J\in \mathcal{K}_{j}} \omega(F,J)< \sum_{j=1}^{N} N^{-1}\epsilon = \epsilon$. 

For (ii), the left-to-right direction follows immediately from (i), while the right-to-left direction follows directly from the equivalent characterization of $D_F(K)$ in equation~(\ref{eqn:mymyd2}).
\end{proof}

These preliminary propositions in place, let us then prove the main theorem of this section.

\begin{thm}\label{Den:thm:mymainman}
Let $\alpha <\omega_1$ and $f\in \mathrm{Den}[a,b]$ and $F(x)=\int_{a}^{x} f$. Then one has that $D^{\alpha+1}_{f,F}[a,b]=\emptyset$ if and only if~$f\in \mathrm{Den}_{\alpha}[a,b]$.
\end{thm}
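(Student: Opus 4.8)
The plan is to prove the biconditional for all $\alpha<\omega_1$ and all intervals simultaneously, by transfinite induction on $\alpha$. The engine of the whole argument is a \emph{localization lemma}: if $g\in\mathrm{Den}[a,b]$ with $G(x)=\int_a^x g$, and if $f=g$ almost everywhere on an open interval $(c,d)\subseteq[a,b]$ with $F-G$ constant there, then $D^{\beta}_{f,F}[a,b]\cap(c,d)=D^{\beta}_{g,G}[a,b]\cap(c,d)$ for every ordinal $\beta$. I would prove this by induction on $\beta$: at successors, to test whether a point $x\in(c,d)$ is removed I may shrink the witnessing open set $U$ inside $(c,d)$ (legitimate since $\mathcal{B}_{f,F}$ is closed under closed subsets), and then membership of a closed $E\subseteq(c,d)$ in $\mathcal{B}_{f,F}$ versus $\mathcal{B}_{g,G}$ agrees because, by~(\ref{eqn:mymyd1})--(\ref{eqn:mymyd3}), it depends only on $f|_{(c,d)}$ up to a.e.\ equality and on $F|_{(c,d)}$ up to an additive constant; limit stages are immediate since the construction takes intersections. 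This lemma lets me pass freely between $(f,F)$ and its restrictions $g_k=f\chi_{(a_k,b_k)}$, $G_k(x)=\int_a^x g_k$, comparing their derivatives on the relevant subintervals.

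For the base case $\alpha=0$, I note $D^1_{f,F}[a,b]=D_f([a,b])\cup D_F([a,b])$, which by Proposition~\ref{Den:prop:Dendec07:1}(i) and Proposition~\ref{Den:prop:Dendec07:2}(i) is empty exactly when $f\in L^1[a,b]$ and $F\in AC_{\ast}([a,b])$; since $F(x)=\int_a^x f$, the condition $f\in L^1[a,b]=\mathrm{Den}_0[a,b]$ already forces $F\in AC[a,b]=AC_{\ast}([a,b])$, giving the equivalence. Writing $\mathcal{Y}_\alpha=\bigcup_{\beta<\alpha}\mathrm{Den}_\beta[a,b]$, I then treat the inductive step. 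For the reverse direction, suppose $f\in\mathrm{Den}_\alpha[a,b]=\mathrm{Step}(\mathrm{Lim}(\mathcal{Y}_\alpha))$. Unwinding Definitions~\ref{den:givenbyleb} and~\ref{den:givenbyimp} yields a closed $K$ with $(a,b)-K=\bigsqcup_n(c_n,d_n)$, $f\chi_K\in L^1[a,b]$, $F\in AC_{\ast}(K)$, and $f\chi_{(c_n,d_n)}\in\mathrm{Lim}(\mathcal{Y}_\alpha)$. It suffices to show $D^\alpha_{f,F}[a,b]\subseteq K$, for then $D^\alpha_{f,F}[a,b]$ is a closed subset of $K\in\mathcal{B}_{f,F}$, hence lies in $\mathcal{B}_{f,F}$, whence $D^{\alpha+1}_{f,F}[a,b]=D_{f,F}(D^\alpha_{f,F}[a,b])=\emptyset$ by~(\ref{den:eqn:handyequation}). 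To get $D^\alpha_{f,F}[a,b]\cap(c_n,d_n)=\emptyset$, I exhaust $(c_n,d_n)$ by intervals $(a_k,b_k)$ with $f\chi_{(a_k,b_k)}\in\mathcal{Y}_\alpha$, so $f\chi_{(a_k,b_k)}\in\mathrm{Den}_{\beta_k}[a,b]$ for some $\beta_k<\alpha$; the induction hypothesis gives $D^{\beta_k+1}_{g_k,G_k}[a,b]=\emptyset$, localization transfers this to $D^{\beta_k+1}_{f,F}[a,b]\cap(a_k,b_k)=\emptyset$, and since the iterates are decreasing in the exponent and $\beta_k+1\leq\alpha$, also $D^\alpha_{f,F}[a,b]\cap(a_k,b_k)=\emptyset$; taking the union over $k$ finishes this direction.

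For the forward direction, assume $D^{\alpha+1}_{f,F}[a,b]=\emptyset$ and set $K=D^\alpha_{f,F}[a,b]$. Then $D_{f,F}(K)=\emptyset$, so $f\chi_K\in L^1[a,b]$ and $F\in AC_{\ast}(K)$ by Propositions~\ref{Den:prop:Dendec07:1}(i) and~\ref{Den:prop:Dendec07:2}(i); I must show that each complementary interval restriction $f\chi_{(c_n,d_n)}$ lies in $\mathrm{Lim}(\mathcal{Y}_\alpha)$, after which Definition~\ref{den:givenbyleb} (via the Step Lemma~\ref{Den:Den03:lem:leb}) places $f$ in $\mathrm{Den}_\alpha[a,b]$. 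The required boundary limits in Definition~\ref{den:givenbyimp} exist for free, since $\int_c^{b_1}f=F(b_1)-F(c)$ and $F$ is continuous on all of $[a,b]$. So the substance is to show, for compact $[a_k,b_k]\subseteq(c_n,d_n)$, that $f\chi_{(a_k,b_k)}\in\mathcal{Y}_\alpha$. When $\alpha$ is a limit, $(a_k,b_k)$ is disjoint from the decreasing intersection $D^\alpha_{f,F}[a,b]=\bigcap_{\beta<\alpha}D^\beta_{f,F}[a,b]$, so by compactness $[a_k,b_k]\cap D^\beta_{f,F}[a,b]=\emptyset$ for a single $\beta<\alpha$; enlarging slightly and applying localization makes $D^\beta_{g_k,G_k}[a,b]$ finite off the support, so $D^{\beta+1}_{g_k,G_k}[a,b]=\emptyset$ and the induction hypothesis gives $f\chi_{(a_k,b_k)}\in\mathrm{Den}_\beta\subseteq\mathcal{Y}_\alpha$.

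The genuinely delicate case, and what I expect to be the main obstacle, is the successor case $\alpha=\alpha_0+1$ of this forward direction, where $[a_k,b_k]$ need not avoid $D^{\alpha_0}_{f,F}[a,b]$ and so there is no ordinal slack: I must show precisely $D^{\alpha_0+1}_{g_k,G_k}[a,b]=\emptyset$, with the endpoints $a_k,b_k$ being the sensitive points. Localization handles the interior (it equals $D^\alpha_{f,F}[a,b]\cap(a_k,b_k)=\emptyset$) and the region outside $[a_k,b_k]$ is killed at level one since $g_k$ vanishes there. For the endpoints I would use that $a_k,b_k\in(c_n,d_n)$ are \emph{not} in $D^\alpha_{f,F}[a,b]=D_{f,F}(D^{\alpha_0}_{f,F}[a,b])$, so each has a neighborhood $U$ with $\overline{U\cap D^{\alpha_0}_{f,F}[a,b]}\in\mathcal{B}_{f,F}$; this ``local goodness at level $\alpha_0$'' transfers to $(g_k,G_k)$ after checking that enlarging a witnessing set by the single boundary point, and gluing $AC_{\ast}$ across the support boundary where $G_k$ is continuous, preserves membership in $\mathcal{B}_{g_k,G_k}$ (a gluing in the spirit of the proof of Proposition~\ref{Den:prop:Dendec07:2}). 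This shows $a_k,b_k\notin D^{\alpha_0+1}_{g_k,G_k}[a,b]$, hence $f\chi_{(a_k,b_k)}\in\mathrm{Den}_{\alpha_0}=\mathcal{Y}_\alpha$ by the induction hypothesis, completing the forward direction and the induction.
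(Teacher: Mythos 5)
Your proposal is correct, and its skeleton coincides with the paper's: the same transfinite induction, the same base case via Proposition~\ref{Den:prop:Dendec07:1}(i), Proposition~\ref{Den:prop:Dendec07:2}(i) and Theorem~\ref{Den:thm:FTCforLeb2prior}, and the same mechanism for the membership-to-vanishing direction (show $D^{\alpha}_{f,F}([a,b])\subseteq K$ and let one more derivative kill it). Where you genuinely diverge is the vanishing-to-membership direction, and in an instructive way: the ``delicate successor case'' you flag is an artifact of measuring the complexity of the zero-extensions $g_k=f\chi_{(a_k,b_k)}$ on all of $[a,b]$, which manufactures boundary points at $a_k,b_k$. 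The paper instead applies the induction hypothesis with the compact subinterval $[c',d']\subseteq(c_n,d_n)$ itself as ambient interval, for the \emph{original} pair $(f,F)$: since $(c_n,d_n)$ is complementary to $K=D^{\alpha}_{f,F}([a,b])$, monotonicity gives $D^{\alpha}_{f,F}([c',d'])\subseteq[c',d']\cap D^{\alpha}_{f,F}([a,b])=\emptyset$ --- vanishing at level $\alpha$ itself, not merely $\alpha+1$ --- so at $\alpha=\alpha_0+1$ one invokes the hypothesis with $\beta=\alpha_0$ outright, and at limits compactness of the decreasing iterates supplies $\beta<\alpha$; successor and limit are uniform and no endpoint analysis arises. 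The slack you thought was missing is in fact present. That said, your route does go through: your localization lemma is true and is just the explicit form of the restriction identity the paper uses silently (e.g.\ in the proof of Theorem~\ref{Den:exfactor}), and your endpoint repair works because $D^{\alpha_0}_{g_k,G_k}([a,b])\subseteq\{a_k,b_k\}\cup\bigl(D^{\alpha_0}_{f,F}([a,b])\cap(a_k,b_k)\bigr)$ for $\alpha_0\geq 1$ and $AC_{\ast}$ of a continuous function survives adjoining finitely many points (at most two partition intervals meet each such point, controlled by continuity). Your version has the merit of making explicit the transfer facts the paper elides (that restriction versus zero-extension does not affect membership in $\mathrm{Den}_{\beta}$); the paper's monotonicity trick buys a two-line, case-free treatment of the direction you found hardest. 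One small shared point: emptying each $(c_n,d_n)$ yields only $D^{\alpha}_{f,F}([a,b])\subseteq K\cup\{a,b\}$, which the paper handles by assuming without loss of generality that $a,b\in K$; your write-up should do likewise or absorb the two points via your gluing lemma.
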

\begin{proof}
The proof is by induction on $\alpha$. Let~$\alpha=0$. First suppose that~$D^{\alpha+1}_{f,F}[a,b]=\emptyset$. By Proposition~\ref{Den:prop:Dendec07:1}~(i) we have that~$f\in L^{1}[a,b]=\mathrm{Den}_{0}[a,b]$. Second, suppose that~$f\in \mathrm{Den}_{0}[a,b]=L^{1}[a,b]$. By the Fundamental Theorem of Calculus for Lebesgue Integrals~(Theorem~\ref{Den:thm:FTCforLeb2prior}),~$F\in AC([a,b])$ and hence~$F\in AC_{\ast}([a,b])$. Then \;$D^{\alpha+1}_{f,F}([a,b])=D_{f,F}([a,b])=\emptyset$.

Now let~$\alpha>0$, and suppose that the result holds for all $\beta<\alpha$. First suppose that~$D^{\alpha+1}_{f,F}([a,b])=\emptyset$. By the two previous propositions, we have that~$f\chi_{D^{\alpha}_{f,F}([a,b])}\in L^{1}[a,b]$ and~$F\in AC_{\ast}(D^{\alpha}_{f,F}([a,b]))$. Suppose~$(a,b)-D^{\alpha}_{f,F}([a,b]) = \bigsqcup_{n} (c_{n}, d_{n})$. If~$[c^{\prime}, d^{\prime}]\subseteq (c_{n}, d_{n})$, then
\begin{equation}
 D^{\alpha}_{f,F}([c^{\prime}, d^{\prime}]) \subseteq [c^{\prime}, d^{\prime}] \subseteq (c_{n}, d_{n})\subseteq (a,b)-D^{\alpha}_{f,F}([a,b])
 \end{equation}
Hence~$D^{\alpha}_{f,F}([c^{\prime}, d^{\prime}])=\emptyset$. Then there is~$\beta<\alpha$ such that~$D^{\beta+1}_{f,F}([c^{\prime}, d^{\prime}])=\emptyset$ and hence by induction hypothesis~$f\chi_{[c^{\prime}, d^{\prime}]}\in \mathrm{Den}_{\beta}[a,b]$. Hence, since we are supposing that~$f\in \mathrm{Den}[a,b]$ it follows from the left-to-right direction of the Improper Integrals Lemma~\ref{Den:Den03:lem:imp} that~$f\chi_{[c_{n}, d_{n}]}\in \mathrm{Lim}(\bigcup_{\beta<\alpha} \mathrm{Den}_{\beta}[a,b])$. Since by definition we have~$(a,b)-D^{\alpha}_{f,F}([a,b]) = \bigsqcup_{n} (c_{n}, d_{n})$ and since we have already established that~$F\in AC_{\ast}(D^{\alpha}_{f,F}([a,b]))$, it follows from Definition~\ref{den:givenbyleb} that~$f\in  \mathrm{Step}(\mathrm{Lim}(\bigcup_{\beta<\alpha} \mathrm{Den}_{\beta}[a,b]))=\mathrm{Den}_{\alpha}[a,b]$.

Second, suppose that~$f\in \mathrm{Den}_{\alpha}[a,b] =\mathrm{Step}(\mathrm{Lim}(\bigcup_{\beta<\alpha} \mathrm{Den}_{\beta}[a,b]))$. By Definition~\ref{den:givenbyleb}, there is a closed set~$K\in K[a,b]$ with~$(a,b)-K=\bigsqcup_{n} (c_{n}, d_{n})$ such that~$f\chi_{K} \in L^{1}[a,b]$ and $f\chi_{(c_{n}, d_{n})}\in \mathrm{Lim}(\bigcup_{\beta<\alpha} \mathrm{Den}_{\beta}[a,b])$ and~$F\in AC_{\ast}(K)$ satisfying~$F(x)=\int_{a}^{x} f$. Further, we may assume without loss of generality that~$a,b\in K$. 

Further, since~$f\chi_{(c_{n}, d_{n})}\in \mathrm{Lim}(\bigcup_{\beta<\alpha} \mathrm{Den}_{\beta}[a,b])$, it follows from Definition~\ref{den:givenbyimp} that ~$(a,b)=\bigcup_{m} (c_{nm}, d_{nm})$ and~$f\chi_{(c_{n}, d_{n})}\chi_{(c_{nm}, d_{nm})}\in \mathrm{Den}_{\beta_{nm}}[a,b]$ for some~$\beta_{nm}<\alpha$. Let~$[c^{\prime}_{nm}, d^{\prime}_{nm}]=[c_{n},d_{n}]\cap [c_{nm},d_{nm}]$, so that~$f\chi_{[c^{\prime}_{nm}, d^{\prime}_{nm}]}\in \mathrm{Den}_{\beta_{nm}}[a,b]$. By induction hypothesis, one has $D^{\beta_{nm}+1}_{f,F}([c^{\prime}_{nm}, d^{\prime}_{nm}]) = \emptyset$ and thus~$D^{\alpha}_{f,F} ([c_{nm}^{\prime}, d_{nm}^{\prime}])=\emptyset$.  Since~$a,b\in K$, it follows that
\begin{equation}
[a,b]\subseteq K\cup ((a,b)-K) = K\cup \bigcup_{nm} ((c_{n},d_{n})\cap (c_{nm}, d_{nm}))
\end{equation}
Then by intersecting both sides with $D^{\alpha}_{f,F}([a,b])$, we may obtain:
\begin{equation}
D^{\alpha}_{f,F}([a,b])  \subseteq  K \cup \bigcup_{nm} (D^{\alpha}_{f,F}([a,b])\cap (c_{n},d_{n})\cap (c_{nm}, d_{nm}))
\end{equation}
But since $D^{\alpha}_{f,F}(E)\cap U \subseteq D_{f,F}^{\alpha}(\overline{E\cap U})$ for any open $U$ and closed $E$, this implies: 
\begin{equation}
D^{\alpha}_{f,F}([a,b])  \subseteq  K \cup \bigcup_{nm} D^{\alpha}_{f,F}(\overline{(c_{n},d_{n})\cap (c_{nm}, d_{nm})})
\end{equation}
and the latter are all empty by the hypothesis that~$D^{\alpha}_{f,F}([c^{\prime}_{nm}, d^{\prime}_{nm}]) = \emptyset$, and so we obtain $D^{\alpha}_{f,F}([a,b])\subseteq K$. From this and the fact that~$f\chi_{K}\in L^{1}[a,b]$ and~$F\in AC_{\ast}(K)$ it follows that $D^{\alpha+1}_{f,F}([a,b])\subseteq D_{f,F}(K)=\emptyset$, which is what we wanted to establish.
\end{proof}

Here is then the equivalent characterizations of Denjoy integration. It's perhaps worth noting explicitly the \emph{absence} of a condition related purely to the derivative $D_f$ from this list; indeed, one can show that the vanishing of this derivative does not in general suffice for being Denjoy integrable. 

\begin{cor}\label{Den:thm:myFTC}

Let $f\in M[a,b]$ and let $F\in C[a,b]$ with $F(a)=0$. Then the following are equivalent:

\begin{enumerate}

\item[(i)] $f\in \mathrm{Den}[a,b]$ and $F(x)=\int_{a}^{x} f$

\item[(ii)] There is $\alpha<\omega_{1}$ such that $f\in \mathrm{Den}_{\alpha}[a,b]$ and $F(x)=\int_{a}^{x} f$

\item[(iii)] There is $\alpha<\omega_{1}$ such that $D^{\alpha+1}_{f,F}([a,b])=\emptyset$ and $F^{\prime}=f$ a.e.

\item[(iv)] There is $\alpha<\omega_{1}$ such that $D^{\alpha+1}_{F}([a,b])=\emptyset$ and $F^{\prime}=f$ a.e.

\end{enumerate}

\end{cor}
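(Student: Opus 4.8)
The plan is to establish the cycle of implications $(\mathrm{i})\Rightarrow(\mathrm{ii})\Rightarrow(\mathrm{iii})\Rightarrow(\mathrm{iv})\Rightarrow(\mathrm{i})$, drawing almost entirely on Theorem~\ref{Den:thm:mymainman}, Proposition~\ref{Den:cor:corenddd}, the general theory of the derivatives recalled in \S\ref{Den:Den02.1}, and the Fundamental Theorem of Calculus for Denjoy Integrals (Theorem~\ref{Den:thm:FTCforDenprior}). The forward half of the cycle is essentially a repackaging of Theorem~\ref{Den:thm:mymainman}, so the only genuinely new work is in the closing implication $(\mathrm{iv})\Rightarrow(\mathrm{i})$.

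For $(\mathrm{i})\Rightarrow(\mathrm{ii})$ I would start from $f\in\mathrm{Den}[a,b]$ with $F(x)=\int_a^x f$ and apply Proposition~\ref{Den:cor:corenddd}(iii) to get $D^{\infty}_{f,F}([a,b])=\emptyset$. Since the rank $\left|f,F\right|=\left|[a,b]\right|_{\mathcal{B}_{f,F}}$ is a countable ordinal, taking $\alpha=\left|f,F\right|$ gives $D^{\alpha}_{f,F}([a,b])=\emptyset$ and hence $D^{\alpha+1}_{f,F}([a,b])=D_{f,F}(\emptyset)=\emptyset$; Theorem~\ref{Den:thm:mymainman} then yields $f\in\mathrm{Den}_{\alpha}[a,b]$, with $F(x)=\int_a^x f$ inherited from $(\mathrm{i})$. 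For $(\mathrm{ii})\Rightarrow(\mathrm{iii})$, since $\mathrm{Den}_{\alpha}[a,b]\subseteq\mathrm{Den}[a,b]$ by construction, condition $(\mathrm{ii})$ returns us to the hypotheses of Theorem~\ref{Den:thm:mymainman}, which immediately gives $D^{\alpha+1}_{f,F}([a,b])=\emptyset$, while $F(x)=\int_a^x f$ together with Theorem~\ref{Den:thm:FTCforDenprior} supplies $F^{\prime}=f$ almost everywhere. For $(\mathrm{iii})\Rightarrow(\mathrm{iv})$ I would exploit the inclusion $\mathcal{B}_{f,F}=\mathcal{B}_f\cap\mathcal{B}_F\subseteq\mathcal{B}_F$: by~(\ref{eqn:mymyd3}) one has $D_{F}(K)\subseteq D_{f,F}(K)$ for every $K$, and a routine induction combining this pointwise inclusion with the monotonicity of a single derivative gives $D^{\beta}_F(K)\subseteq D^{\beta}_{f,F}(K)$ for all $\beta$, so that $D^{\alpha+1}_{f,F}([a,b])=\emptyset$ forces $D^{\alpha+1}_{F}([a,b])=\emptyset$; the condition $F^{\prime}=f$ a.e.\ transfers verbatim.

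The step I expect to carry the real content is $(\mathrm{iv})\Rightarrow(\mathrm{i})$, and the obstacle is that Theorem~\ref{Den:thm:mymainman} presupposes $f\in\mathrm{Den}[a,b]$ — which is exactly what we are trying to prove — so it cannot be used to manufacture Denjoy integrability from scratch. Instead I would route the argument through the general theory of \S\ref{Den:Den02.1}: from $D^{\alpha+1}_{F}([a,b])=\emptyset$ we obtain $D^{\infty}_{F}([a,b])=\emptyset$, and by the characterization $D^{\infty}_{\mathcal{B}}(K)=\emptyset\iff K\in\mathcal{B}_{\sigma}$ this says $[a,b]\in(\mathcal{B}_F)_{\sigma}$; unwinding the definition of $\mathcal{B}_F$ from~(\ref{eqn:mymymy23142134}), this means there are closed $L_m$ with $[a,b]=\bigcup_m L_m$ and $F\in AC_{\ast}(L_m)$, which is precisely the assertion that $F\in ACG_{\ast}([a,b])$. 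Combining $F\in ACG_{\ast}([a,b])$ with the standing hypotheses that $f\in M[a,b]$, that $F$ is continuous with $F(a)=0$, and with the assumed $F^{\prime}=f$ a.e., the Fundamental Theorem of Calculus for Denjoy Integrals (Theorem~\ref{Den:thm:FTCforDenprior}) delivers $f\in\mathrm{Den}[a,b]$ with $F(x)=\int_a^x f$, closing the cycle. It is worth flagging that this route uses only the $D_F$-derivative and never touches $D_f$, in keeping with the remark preceding the statement that no purely-$D_f$ condition appears: the vanishing of $D_F$ already encodes membership in $ACG_{\ast}$, which is exactly the hypothesis that the Denjoy FTC requires.
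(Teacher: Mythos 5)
Your proposal is correct and follows essentially the same route as the paper's own proof: the cycle $(\mathrm{i})\Rightarrow(\mathrm{ii})\Rightarrow(\mathrm{iii})\Rightarrow(\mathrm{iv})\Rightarrow(\mathrm{i})$, with the first two implications delegated to Proposition~\ref{Den:cor:corenddd} and Theorem~\ref{Den:thm:mymainman}, the third to the inclusion $D_F(K)\subseteq D_{f,F}(K)$ from~(\ref{eqn:mymyd3}), and the last to the $\mathcal{B}_{\sigma}$-characterization giving $F\in ACG_{\ast}([a,b])$ followed by Theorem~\ref{Den:thm:FTCforDenprior}. If anything, you are slightly more explicit than the paper in supplying $F^{\prime}=f$ a.e.\ in $(\mathrm{ii})\Rightarrow(\mathrm{iii})$ and in citing the Denjoy FTC to close $(\mathrm{iv})\Rightarrow(\mathrm{i})$, both of which the paper leaves implicit.
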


\begin{proof}
For (i)$\Rightarrow$(ii), suppose that $f\in \mathrm{Den}[a,b]$ with $F(x)=\int_{a}^{x} f$. Then by Proposition~\ref{Den:cor:corenddd} there is $\alpha<\omega_1$ such that $D_{f,F}^{\alpha+1}([a,b])=\emptyset$, and so by the left-to-right direction of the previous theorem we have that $f$ is a member of $\mathrm{Den}_{\alpha}[a,b]$. For (ii)$\Rightarrow$(iii), since $f$ is in $\mathrm{Den}_{\alpha}[a,b]$, it is an element of $\mathrm{Den}[a,b]$ and hence by the right-to-left direction of the previous theorem we have that $D_{f,F}^{\alpha+1}([a,b])=\emptyset$. For (iii)$\Rightarrow$(iv), simply note that it follows from the identity $D_{f,F}(K)=D_f(K)\cup D_F(K)$ in (\ref{eqn:mymyd3}), that $D^{\beta}_{F}([a,b])\subseteq D^{\beta}_{f,F}([a,b])$ for all ordinals $\beta$. For (iv)$\Rightarrow$(i), simply note that by the remark at the beginning of \S\ref{Den:Den02.1} (pertaining to the notation $\mathcal{B}_{\sigma}$), from $D^{\alpha+1}_{F}([a,b])=\emptyset$ we can infer that there is a sequence $E_n\in K[a,b]$ such that $[a,b]=\bigcup_n E_n$ and $F\in AC_{\ast}(E_n)$, which is just the definition of $ACG_{\ast}[a,b]$.
\end{proof}

\section{The Three Derivatives are Borel}\label{Den:Den02.3}

In this section we undertake the analysis of the complexity of the notions related to the Denjoy integral which we have defined in the previous sections. So we build towards showing that the derivatives $D_f$, $D_F$, and $D_{f,F}$ from (\ref{eqn:mymyd1})-(\ref{eqn:mymyd3}) are Borel maps in Propositions~\ref{den:cor:sdfadasf:1}, and \ref{den:cor:sdfadasf:2}. Then, at the close of this section, we derive the main  Theorems~\ref{Den:cor:main2prior}, \ref{thm:mynewthing}, and~\ref{thm:mainnew5}. 

Let us begin by taking brief survey of the Polish spaces with which we shall be working. Recall that~$K[a,b]$, the space of compact subsets of~$[a,b]$, is a Polish space, where the topology is generated by the ``miss'' sets~$\{K\in K[a,b]: K\cap U^{c} =\emptyset\}$ and the ``hit'' sets~$\{K\in K[a,b]: K\cap U\neq \emptyset\}$, wherein~$U\subseteq [a,b]$ is open (cf. Kechris~\cite{Kechris1995} \S4.F pp.~24 ff). Likewise, as mentioned in the introduction \S\ref{Den:Den01},~$C[a,b]$ the space of continuous real-valued functions on~$[a,b]$, is a Polish space, where the topology is given by the sup-metric~$\|F-G\|_{u}=\sup\{x\in [a,b]: \left| F(x)-G(x)\right|\}$ (cf. Kechris~\cite{Kechris1995} \S4.E p.~24). 

The Polish space structure on~$M[a,b]$, the space of real-valued measurable functions on~$[a,b]$ (where functions which are equal a.e. are identified), is less widely used. It is given by the metric $d(f,g) = \int_a^b \min(1, \left|f-g\right|)$, which has the effect that~$f_{n}\rightarrow f$ in~$M[a,b]$ if and only if~$f_{n}\rightarrow f$ in measure, that is~$\lim_{n} \mu(\{x\in [a,b] :\left| f_{n}(x)-f(x)\right|> \epsilon\})=0$ for all~$\epsilon>0$. For the proof that it is a Polish space, see Doob~\cite{Doob1994}~\S\S{11-12}~pp.~65-68 or Banach~\cite{Banach1987aa} p. 6. Since we're dealing with measurable functions on the interval $[a,b]$ as opposed to the entire real line, it follows that addition and multiplication are continuous functions on~$M[a,b]$ (cf. Folland~\cite{Folland1999}~p.~63). Further, note that absolute value is continuous on~$M[a,b]$ since if~$f_{n}\rightarrow f$ in measure, then~$\left| f_{n}\right| \rightarrow \left| f\right|$ in measure because~$\{x\in [a,b] :\left|\hspace{1mm} \left|f_{n}(x)\right|-\left| f(x)\right| \hspace{1mm} \right|> \epsilon\}\subseteq \{x\in [a,b] :\left| f_{n}(x)-f(x)\right|> \epsilon\}$. Finally, recall that if $f_n\rightarrow f$ in measure, then  $f_{n_k}\rightarrow f$ a.e. for some subsequence $f_{n_k}$ of $f_n$ (cf. \cite{Folland1999} Theorem 2.30 p. 61).

In what follows we'll show that various maps are Borel, and it is helpful in this connection to recall that if~$Y$ is second-countable metrizable then a necessary and sufficient condition for~$f:X\rightarrow Y$ to be Borel is for~$f^{-1}(B(y,\epsilon))$ to be Borel for all~$y\in \mathrm{Im}(f)$ and all~$\epsilon>0$. We begin with arguments pertaining to the derivative map $K\mapsto D_f(K)$.

\begin{prop}\label{den:prop:eqnadafda2323} (i) The map~$E\mapsto \chi_{E}$ from~$K[a,b]$ into~$M[a,b]$ is Borel, and (ii) $L^1[a,b]$ is Borel in $M[a,b]$.
\end{prop}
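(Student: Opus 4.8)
The plan is to verify both parts by appeal to the Borel criterion recalled just before the proposition: since $M[a,b]$ is second-countable and metrizable, a map into it is Borel as soon as the preimage of every basic ball $B(y,\epsilon)$ centered at an image point $y$ is Borel. For part (i), with $f$ the map $E\mapsto \chi_E$, the image points are the $\chi_{E_0}$ with $E_0\in K[a,b]$, so I must show $\{E : d(\chi_E,\chi_{E_0})<\epsilon\}$ is Borel. The key simplification is that the metric on $M[a,b]$ evaluates trivially on indicator functions: since $|\chi_E-\chi_{E_0}|=\chi_{E\triangle E_0}$ takes only the values $0$ and $1$, one has $d(\chi_E,\chi_{E_0})=\int_a^b \min(1,|\chi_E-\chi_{E_0}|)=\mu(E\triangle E_0)$. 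Hence it suffices to show that for each fixed $E_0$ the map $E\mapsto \mu(E\triangle E_0)$ is Borel on $K[a,b]$. I would do this via the decomposition $\mu(E\triangle E_0)=2\mu(E\cup E_0)-\mu(E)-\mu(E_0)$: the last term is a constant, the map $E\mapsto E\cup E_0$ is continuous (indeed $1$-Lipschitz for the Hausdorff metric that generates the hit-miss topology), and so the whole expression reduces to the single point that $E\mapsto \mu(E)$ is Borel.

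The one genuine content in part (i) is therefore the claim that $\mu\colon K[a,b]\to\mathbb{R}$ is \emph{upper semicontinuous}, hence Baire class $1$ and in particular Borel; this is the main obstacle for (i). I would argue it directly from the defining neighborhoods of $K[a,b]$. Suppose $K_n\to K$ and let $U$ be any open set with $K\subseteq U$. Then $K$ lies in the ``miss'' set $\{L : L\cap U^c=\emptyset\}$, which is open, so $K_n\subseteq U$ for all large $n$ and therefore $\limsup_n \mu(K_n)\leq \mu(U)$. Taking the infimum over open $U\supseteq K$ and invoking the outer regularity of Lebesgue measure, namely $\inf_{U\supseteq K}\mu(U)=\mu(K)$, yields $\limsup_n\mu(K_n)\leq \mu(K)$, which is upper semicontinuity. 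Combined with the reduction of the previous paragraph, this completes part (i).

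For part (ii), I would exhibit $L^1[a,b]$ as an $F_\sigma$ subset of $M[a,b]$. For each $n$ set $g_n(f)=\int_a^b \min(|f|,n)$. Each $g_n$ is continuous on $M[a,b]$: if $f_k\to f$ in measure then, since $t\mapsto \min(|t|,n)$ is $1$-Lipschitz, $\min(|f_k|,n)\to\min(|f|,n)$ in measure, and these functions are uniformly bounded by $n$ on the finite measure space $[a,b]$, so splitting the integral of the difference over the small-measure set where it exceeds a given threshold and over its complement gives $g_n(f_k)\to g_n(f)$; here the main point is just this continuity, resting on the standard bounded/convergence-in-measure fact. Since $g_n(f)$ is nondecreasing in $n$ with $\lim_n g_n(f)=\int_a^b|f|$ by monotone convergence, one has $f\in L^1[a,b]$ iff $\sup_n g_n(f)<\infty$. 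Consequently $L^1[a,b]=\bigcup_{M} \bigcap_n \{f : g_n(f)\leq M\}$, a countable union of closed sets (each $\{g_n\leq M\}$ is closed by continuity of $g_n$), and therefore Borel.
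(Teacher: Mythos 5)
Your proposal is correct, and it follows the same overall skeleton as the paper's proof --- in (i) the reduction of the metric on indicator functions to $d(\chi_E,\chi_{E_0})=\mu(E\triangle E_0)$, and in (ii) the decomposition $L^1[a,b]=\bigcup_N\{f:\int_a^b|f|\leq N\}$ into closed sets --- but you fill in the two key sub-lemmas differently. For (i), the paper writes $\mu(D\triangle E)=\mu(D\cup E)-\mu(D\cap E)$ and cites Kechris for the Borel-ness of union, intersection, and the measure map $E\mapsto\mu(E)$; you instead use the identity $\mu(E\triangle E_0)=2\mu(E\cup E_0)-\mu(E)-\mu(E_0)$, which eliminates the intersection map (the weakest link, since $(D,E)\mapsto D\cap E$ is only Borel while union is continuous), and you prove the Borel-ness of $\mu$ from scratch via upper semicontinuity from the miss-sets and outer regularity --- a self-contained argument where the paper relies on a citation. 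For (ii), the paper shows each $C_N=\{f:\int_a^b|f|\leq N\}$ is closed by extracting an a.e.-convergent subsequence from a sequence converging in measure and applying Fatou's lemma; you exhibit the same set as $\bigcap_n\{f:g_n(f)\leq N\}$ where $g_n(f)=\int_a^b\min(|f|,n)$, and prove each $g_n$ continuous on $M[a,b]$ by the Lipschitz property of truncation plus a bounded-convergence-in-measure estimate, so closedness is automatic. Both routes are sound; yours buys self-containedness and avoids both the intersection map and the subsequence/Fatou step, at the cost of a somewhat longer write-up, while the paper's version is shorter by leaning on standard references.
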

\begin{proof}
For (i) it suffices to show that~$\{D\in K[a,b]: d(\chi_{D}, \chi_{E})<\epsilon\}$ is Borel. But one has that $d(\chi_D, \chi_E)=\int_a^b \left|\chi_D-\chi_E\right| = \mu(D\triangle E)=\mu(D\cup E)-\mu(D\cap E)$. And the maps~$(D, E)\mapsto D\cup E$,~$(D, E)\mapsto D\cap E$ and~$E\mapsto \mu(E)$ are Borel (cf. Kechris~\cite{Kechris1995} p. 27, p.~71, and p.~114).

For (ii), let $C_N = \{f\in L^1[a,b]: \int_a^b \left|f\right|\leq N\}$ for each $N\geq 1$. Since $L^1[a,b]$ is the union of the $C_N$, it suffices to show that each $C_N$ is closed in $M[a,b]$. So suppose that $f_n$ is a sequence in $C_N$ with $f_n\rightarrow f$ in measure. Since absolute value is continuous on~$M[a,b]$, one then has $\left|f_n\right|\rightarrow \left|f\right|$ in measure. Then some subsequence $\left|f_{n_k}\right|$ converges to $\left|f\right|$ a.e. Then by Fatou's Lemma, one has that $\int_a^b \left|f\right| \leq \liminf_k \int_a^b \left|f_{n_k}\right|  \leq N$, and so $f$ is in $C_N$ as well.
\end{proof}

\begin{prop}\label{den:cor:sdfadasf:1} The map~$(f,K)\mapsto D_{f}(K)$ is Borel from~$M[a,b]\times K[a,b]$ to~$K[a,b]$.
\end{prop}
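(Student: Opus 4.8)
The plan is to reduce the Borelness of $(f,K)\mapsto D_f(K)$ to statements about countably many ``rational boxes,'' exploiting the fact (recalled just before Proposition \ref{den:prop:eqnadafda2323}) that, since $K[a,b]$ is second-countable metrizable, it suffices to show that the preimage of each basic open set is Borel. The natural basic open sets of $K[a,b]$ are the ``hit'' and ``miss'' sets, so I would aim to show that for every open rational interval $(p,q)\subseteq[a,b]$ the two sets
\begin{equation}
\{(f,K): D_f(K)\cap (p,q)\neq\emptyset\}, \qquad \{(f,K): D_f(K)\cap (p,q)=\emptyset\}
\end{equation}
are Borel; these generate everything we need. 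The crux is the equivalent characterization \eqref{eqn:mymyd1} of $D_f(K)$ together with Proposition \ref{Den:prop:Dendec07:1}(ii), which says precisely that $(p,q)\cap D_f(K)=\emptyset$ if and only if for all rational $[r,s]\subseteq(p,q)$ one has $f\chi_{[r,s]\cap K}\in L^1[a,b]$. This is the key reduction: it converts a statement quantifying over all points of $K$ into a countable conjunction of membership statements in $L^1[a,b]$.

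Concretely, I would first show that for each fixed rational pair $r<s$, the map $(f,K)\mapsto f\chi_{[r,s]\cap K}$ is a Borel map from $M[a,b]\times K[a,b]$ into $M[a,b]$. For this I would use the continuity of multiplication on $M[a,b]$ (recalled in the survey of $M[a,b]$), writing $f\chi_{[r,s]\cap K}=f\cdot \chi_{[r,s]}\cdot \chi_{K}$; the map $K\mapsto [r,s]\cap K$ is Borel into $K[a,b]$ (intersection with a fixed closed set is Borel, cf. Kechris), and then $E\mapsto\chi_E$ is Borel by Proposition \ref{den:prop:eqnadafda2323}(i), so the composite and the product are Borel. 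Next, since $L^1[a,b]$ is Borel in $M[a,b]$ by Proposition \ref{den:prop:eqnadafda2323}(ii), the set $\{(f,K): f\chi_{[r,s]\cap K}\in L^1[a,b]\}$ is Borel as the preimage of a Borel set under a Borel map. Taking the countable intersection over all rational $[r,s]\subseteq(p,q)$ and invoking Proposition \ref{Den:prop:Dendec07:1}(ii) gives that $\{(f,K): (p,q)\cap D_f(K)=\emptyset\}$ is Borel, and its complement handles the ``hit'' side.

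Finally I would assemble these into Borelness of the whole map. A miss set $\{K: D_f(K)\cap U^c=\emptyset\}$ for $U$ open is handled by covering $U$ by countably many rational intervals $(p,q)$ and intersecting, and a hit set $\{K: D_f(K)\cap U\neq\emptyset\}$ by taking the complement of the corresponding miss set for $U$; pulling these back through $(f,K)\mapsto D_f(K)$ yields Borel sets by the work above, so the map is Borel. I expect the main obstacle to be the first technical step, namely verifying cleanly that $(f,K)\mapsto f\chi_{[r,s]\cap K}$ is Borel: one must be careful that the pointwise product $\chi_{[r,s]}\cdot\chi_K$ agrees a.e. with $\chi_{[r,s]\cap K}$ (so that the $M[a,b]$-identifications cause no trouble) and that the separate Borelness facts compose correctly, since $M[a,b]$ is only metrizable and one is chaining several Borel maps together. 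Once that lemma is in hand, the rest is a routine translation between the hit/miss generators and the countable family of rational $L^1$-membership conditions.
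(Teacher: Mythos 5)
Your core argument is correct and rests on the same pillars as the paper's proof: the key reduction is Proposition~\ref{Den:prop:Dendec07:1}~(ii), which converts $(p,q)\cap D_f(K)=\emptyset$ into countably many conditions $f\chi_{[r,s]\cap K}\in L^1[a,b]$, each of which is Borel by Proposition~\ref{den:prop:eqnadafda2323} together with the Borelness of $(D,L)\mapsto D\cap L$ and the continuity of multiplication on $M[a,b]$ --- exactly the ingredients the paper uses, and your worry about the technical lemma $(f,K)\mapsto f\chi_{[r,s]\cap K}$ is unfounded, since $\chi_{[r,s]}\cdot\chi_K=\chi_{[r,s]\cap K}$ pointwise and the cited facts compose without trouble. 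The only real difference is the final packaging: the paper shows that the \emph{graph} $\{(f,K,E): E=D_f(K)\}$ is Borel, using that two closed sets are equal iff they miss the same rational intervals, and then appeals to the standard fact that a function between standard Borel spaces with Borel graph is Borel; you instead verify directly that preimages of the generating hit/miss sets are Borel. Both packagings work, and neither buys much over the other.

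One concrete caution about your assembly step, which as literally stated would fail. For open $U=\bigcup_n(p_n,q_n)$, the condition $D_f(K)\cap U^c=\emptyset$ is \emph{not} the intersection over $n$ of the conditions $D_f(K)\cap(p_n,q_n)^c=\emptyset$: emptiness of an intersection is not a conjunction of emptiness conditions, and you are covering the wrong set ($U$ rather than $U^c$). The routine fix is by compactness: $D_f(K)\cap U^c=\emptyset$ iff the compact set $U^c$ is covered by finitely many rational open intervals each disjoint from $D_f(K)$, so the miss set is a countable union, over finite tuples of rational intervals covering $U^c$, of finite intersections of your basic Borel sets $\{(f,K): D_f(K)\cap(p,q)=\emptyset\}$. (The hit side is unproblematic: $D_f(K)\cap U\neq\emptyset$ iff $D_f(K)\cap(p_n,q_n)\neq\emptyset$ for some $n$. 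Alternatively, one can sidestep miss sets entirely by recalling that the Borel structure of $K[a,b]$ is already generated by the hit sets of basic open sets, cf.\ the Effros Borel structure in Kechris~\cite{Kechris1995}~\S{12.C}.) With that repair your proof is complete; the paper's graph formulation simply avoids this bookkeeping.
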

\begin{proof}
It suffices to show that the graph $G=\{(f,K,E)\in M[a,b]\times (K[a,b])^{2}: E=D_{f}(K)\}$ of the map is Borel. But note that since~$K,E$ are closed sets, it follows that $(f,K,E)\in G$ precisely when for all rationals $p<q$ one has that $(p,q)\cap E=\emptyset$ iff $(p,q)\cap D_{f}(K)=\emptyset$. But the left-hand side of this biconditional is Borel in~$K[a,b]$ by definition of the topology on~$K[a,b]$, while the right-hand side of this biconditional is Borel in~$M[a,b]\times K[a,b]$ by Proposition~\ref{Den:prop:Dendec07:1}~(ii) and Proposition~\ref{den:prop:eqnadafda2323}, and the fact that the map~$(D,L)\mapsto D\cap L$ from~$K[a,b]\times K[a,b]$ to~$K[a,b]$ is Borel (cf. Kechris~\cite{Kechris1995}~p.~71).
\end{proof}

Let's turn now to the derivative $K\mapsto D_F(K)$ and show that it too is a Borel map. First let's note the following:
\begin{prop}\label{Den:prop:prop129a} The relation~$F\in AC_{\ast}(E)$ is Borel on~$C[a,b]\times K[a,b]$. 
\end{prop}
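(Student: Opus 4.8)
The plan is to reduce the defining condition for $AC_*(K)$ --- which as stated quantifies over the uncountably many $K$-edged pre-partitions --- to a countable Boolean combination of Borel conditions on $(F,K)\in C[a,b]\times K[a,b]$. Two moves make this possible: first, restricting the endpoints of the competing pre-partitions to a \emph{dense} subset of $K$, and second, selecting that dense subset in a Borel way as $K$ varies over the hyperspace. I would first dispose of the trivial case $K=\emptyset$, noting that $\{\emptyset\}$ is Borel in $K[a,b]$ and that $F\in AC_*(\emptyset)$ holds vacuously since pre-partitions are required to be non-empty. For nonempty $K$ I invoke the standard selection theorem for the hyperspace (cf. Kechris~\cite{Kechris1995} Theorem~12.13): there are Borel maps $d_n:K[a,b]\to[a,b]$ with $\{d_n(K):n\in\omega\}$ dense in $K$ for every nonempty $K$. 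By the density fact recalled in \S\ref{Den:Den01.2} (Gordon~\cite{Gordon1994} Theorem~6.2(d): for continuous $F$ and dense $Q\subseteq K$ one has $F\in AC_*(Q)$ iff $F\in AC_*(K)$), it suffices to test absolute continuity in the restricted sense on the countable dense set $Q_K=\{d_n(K):n\in\omega\}$ in place of all of $K$.

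Next I would unwind Definition~\ref{Den:defn:ab2prior} relative to $Q_K$. A $Q_K$-edged pre-partition is coded by a finite index sequence $\sigma=(n_1,m_1,\dots,n_k,m_k)$ with associated closed intervals $J_i=[d_{n_i}(K),d_{m_i}(K)]$. For fixed $\sigma$ and fixed rational $\delta>0$, the requirements that $\sigma$ determine a genuine pre-partition of total measure $<\delta$ --- namely the ordering/non-overlap conditions $d_{n_1}(K)\le d_{m_1}(K)\le d_{n_2}(K)\le\cdots\le d_{m_k}(K)$ together with $\sum_i\bigl(d_{m_i}(K)-d_{n_i}(K)\bigr)<\delta$ --- cut out a Borel subset $V_{\delta,\sigma}\subseteq K[a,b]$, since each $d_j$ is Borel.

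I would then use that the modulus map $(F,c,d)\mapsto\omega(F,[c,d])=\max_{[c,d]}F-\min_{[c,d]}F$ is jointly continuous on $C[a,b]\times\{(c,d):a\le c\le d\le b\}$, an easy consequence of the uniform continuity of each $F$ together with uniform convergence in $C[a,b]$. Hence on the Borel set $V_{\delta,\sigma}$ (where the ordering holds, so each $J_i$ is a genuine interval) the function $(F,K)\mapsto\sum_i\omega(F,J_i)$ is a composition of a Borel map into the domain of $\omega$ with a continuous map, and is therefore Borel. Now for rational $\epsilon,\delta>0$ and index sequence $\sigma$ define the set $A_{\epsilon,\delta,\sigma}$ of all $(F,K)$ such that \emph{either} $(F,K)\notin V_{\delta,\sigma}$ \emph{or} $\sum_i\omega(F,J_i)<\epsilon$; by the foregoing this is Borel, being $V_{\delta,\sigma}^{\,c}\cup\bigl(V_{\delta,\sigma}\cap\{\sum_i\omega(F,J_i)<\epsilon\}\bigr)$.

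Finally I would assemble the relation. The condition is monotone in $\delta$ (shrinking $\delta$ only removes competitors) and its conclusion is monotone in $\epsilon$, so the quantifiers $\forall\epsilon\,\exists\delta$ may be restricted to positive rationals, and the quantifier over pre-partitions becomes a quantifier over the countable set of index sequences $\sigma$. This yields the identification
\[
\{(F,K): F\in AC_*(K)\}=\bigcap_{\epsilon\in\mathbb{Q}^+}\ \bigcup_{\delta\in\mathbb{Q}^+}\ \bigcap_{\sigma} A_{\epsilon,\delta,\sigma},
\]
(together with the Borel exceptional set $\{\emptyset\}$ on which the relation holds), which is a countable combination of Borel sets and hence Borel. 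I expect the main obstacle to be precisely the selection step: because the endpoints of the pre-partitions must lie in $K$, and $K$ ranges over the whole hyperspace, one cannot use a single fixed countable set of rational test points (such a set may miss $K$ entirely). The Borel dense selectors $d_n$ supply test points that vary Borel-measurably with $K$, and the density theorem from \S\ref{Den:Den01.2} is exactly what licenses replacing $K$ by this Borel-varying countable dense set without altering the $AC_*$ relation; once these are in hand, the joint continuity of $\omega$ reduces everything to routine closure of Borel sets under countable operations.
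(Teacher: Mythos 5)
Your proposal is correct and follows essentially the same route as the paper's proof: the Borel dense selectors $d_n$ from Kechris Theorem~12.13, the reduction to a countable dense subset via Gordon's Theorem~6.2(d), the joint continuity of $(F,c,d)\mapsto\omega(F,[c,d])$, and the coding of pre-partitions by finite index strings with quantifiers restricted to rational $\epsilon,\delta$. The only differences are cosmetic (you package the measure condition into the set $V_{\delta,\sigma}$ rather than into the implication, and you treat $K=\emptyset$ explicitly), so there is nothing substantive to add.
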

\begin{proof}
Since~$F$ is continuous, we may replace~$E$ by a countable dense subset. But maps~$d_{n}: K[a,b]\rightarrow [a,b]$ with~$\{d_{n}(E): n\geq 0\}$ dense in~$E$ for all $E\in K[a,b]$ may be chosen to be Borel (see Kechris~\cite{Kechris1995} Theorem~12.13 p.~76). Moreover, consider the closed subset~$\triangle=\{(c,d)\in \mathbb{R}\times \mathbb{R}: c\leq d\}$ which is thus a Polish space, and note that the map~$(F,c,d)\mapsto \omega(F,[c,d])$ from~$C([a,b])\times \triangle~$ to~$\mathbb{R}$ is a continuous map.  

Let $\epsilon, \delta>0$ and let $\sigma=(n_1, m_1, \ldots, n_{\ell}, m_{\ell})$ be a finite string of natural numbers of even length, and define the set $X_{\epsilon, \delta,\sigma}$ to be the set of pairs $(F,E)$ such that if one has
\begin{equation}
d_{n_1}(E)<d_{m_1}(E)\leq d_{n_2}(E)<d_{m_2}(E)\leq \cdots \leq d_{n_{\ell}}(E)<d_{m_{\ell}}(E)
\end{equation}
then one has
\begin{equation}
\sum_{i=1}^{\ell} (d_{m_i}(E)-d_{n_i}(E))<\delta \Rightarrow \sum_{i=1}^{\ell} \omega(F, [d_{n_i}(E),d_{m_i}(E)])<\epsilon
\end{equation}
Since the maps~$E\mapsto d_{n}(E)$ and~$(F,c,d)\mapsto \omega(F,[c,d])$ are Borel, it thus follows that the set~$X_{\epsilon,\delta,\sigma}$ is Borel. 

Further, note that $F\in AC_{\ast}(E)$ iff for all positive rational $\epsilon>0$ there is positive rational $\delta>0$ such that $(F,E)\in X_{\epsilon, \delta, \sigma}$ for all finite strings~$\sigma$ of natural numbers of even length. Hence the relation~$F\in AC_{\ast}(E)$ is Borel.
\end{proof}

\begin{prop}\label{den:cor:sdfadasf:2}
The map~$(F, K)\mapsto D_{F}(K)$ is Borel from~$C[a,b]\times K[a,b]$ to~$K[a,b]$.
\end{prop}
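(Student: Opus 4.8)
The plan is to follow exactly the template used in the proof of Proposition~\ref{den:cor:sdfadasf:1}, replacing the appeals to local Lebesgue integrability with the corresponding facts about absolute continuity in the restricted sense. First I would reduce the claim to showing that the graph $G=\{(F,K,E)\in C[a,b]\times (K[a,b])^{2}: E=D_{F}(K)\}$ is Borel; this suffices because $K[a,b]$ is Polish, so a map into it is Borel once its graph is Borel.

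The key observation is that two closed subsets of $[a,b]$ coincide precisely when they miss the same rational open intervals. Hence $(F,K,E)\in G$ exactly when, for all rationals $p<q$, one has that $(p,q)\cap E=\emptyset$ iff $(p,q)\cap D_{F}(K)=\emptyset$. The left-hand side of this biconditional defines a Borel (indeed open) condition on the $E$-coordinate, directly from the definition of the topology on $K[a,b]$ via the ``miss'' sets. Since the full graph condition is then a countable Boolean combination indexed by the rationals $p<q$, it remains only to verify that, for fixed rationals $p<q$, the relation $(p,q)\cap D_{F}(K)=\emptyset$ is Borel on $C[a,b]\times K[a,b]$.

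For this I would invoke Proposition~\ref{Den:prop:Dendec07:2}~(ii), which rewrites this relation as: for all rational $[r,s]\subseteq (p,q)$ one has $F\in AC_{\ast}([r,s]\cap K)$. The intersection map $K\mapsto [r,s]\cap K$ is Borel, being the restriction of the Borel map $(D,L)\mapsto D\cap L$ to the fixed second coordinate $L=[r,s]$ (cf. Kechris~\cite{Kechris1995}~p.~71); and the relation $F\in AC_{\ast}(E)$ is Borel on $C[a,b]\times K[a,b]$ by Proposition~\ref{Den:prop:prop129a}. Composing, each relation $F\in AC_{\ast}([r,s]\cap K)$ is Borel in $(F,K)$, and taking the countable intersection over the rational intervals $[r,s]\subseteq (p,q)$ shows that $(p,q)\cap D_{F}(K)=\emptyset$ is Borel, as required.

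The main substantive work has already been carried out in Proposition~\ref{Den:prop:prop129a}, namely the Borelness of the $AC_{\ast}$ relation, which was obtained there by reducing to a Borel countable dense selection of edges and exploiting the continuity of $(F,c,d)\mapsto \omega(F,[c,d])$. Given that proposition, the present statement is purely a matter of assembling Borel conditions. The only point requiring any care is the passage from the map to its graph via the rational-interval characterization of closed sets, exactly as in Proposition~\ref{den:cor:sdfadasf:1}; no genuinely new difficulty arises.
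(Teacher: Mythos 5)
Your proposal is correct and follows essentially the same route as the paper's own proof: reduction to the Borelness of the graph, the rational-interval characterization of equality of closed sets, and the combination of Proposition~\ref{Den:prop:Dendec07:2}~(ii), Proposition~\ref{Den:prop:prop129a}, and the Borelness of the intersection map. Your added justifications (Borel graph implies Borel map into a Polish space, and the countable Boolean combination over rationals) are accurate elaborations of steps the paper leaves implicit.
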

\begin{proof}
It suffices to show that the graph $G=\{(F,K,E)\in C[a,b]\times (K[a,b])^{2}: E=D_{F}(K)\}$ of the map is Borel. But note that since~$K,E$ are closed sets, it follows that $(F,K,E) \in G$ if and only if for all rationals $p<q$ we have $(p,q)\cap E=\emptyset$ iff $(p,q)\cap D_{F}(K)=\emptyset$. But the left-hand side of this biconditional is Borel in~$K[a,b]$ by definition of the topology on~$K[a,b]$, while the right-hand side of this biconditional is Borel in~$C[a,b]\times K[a,b]$ by Proposition~\ref{Den:prop:Dendec07:2}~(ii), Proposition~\ref{Den:prop:prop129a}, and the fact that the map~$(D,L)\mapsto D\cap L$ from~$K[a,b]\times K[a,b]$ to~$K[a,b]$ is Borel (cf. Kechris~\cite{Kechris1995}~p.~71).
\end{proof}

Now, from Propositions~\ref{den:cor:sdfadasf:1}, \ref{den:cor:sdfadasf:2} and the fact that the map~$(D, E)\mapsto D\cup E$ is continuous (see Kechris~\cite{Kechris1995}~p.~27), we can conclude that the third derivative $D_{f,F}$ from equation~(\ref{eqn:mymyd3}) is also Borel.

 Since the derivatives are Borel we can then deduce the following from Kechris~\cite{Kechris1995}~Theorem~34.10~\&~p.~275:
\begin{prop}\label{previoustheorem} The following sets are coanalytic and the ranks~$\left|K\right|_{f}$,~$\left|K\right|_{F}$, and~$\left|K\right|_{f,F}$ are coanalytic ranks on these sets:
\begin{enumerate}
\item[(i)]~$\{ (f,K)\in M[a,b]\times K[a,b]: \exists \; \alpha<\omega_{1} \; D^{\alpha}_{f}(K)=\emptyset\}$
\item[(ii)]~$\{ (F,K)\in C[a,b]\times K[a,b]: \exists \; \alpha<\omega_{1} \; D^{\alpha}_{F}(K)=\emptyset\}$
\item[(iii)]~$\{ (f,F,K)\in M[a,b]\times C[a,b]\times K[a,b]: \exists \; \alpha<\omega_{1} \; D^{\alpha}_{f,F}(K)=\emptyset\}$
\end{enumerate}
\end{prop}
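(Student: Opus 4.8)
The plan is to recognize all three sets as the ``vanishing sets'' of Borel derivatives in the precise sense of Kechris~\cite{Kechris1995}~\S34.D, so that the proposition becomes an application of the general theory recorded there. Accordingly there are essentially two things to check: that each of $D_f$, $D_F$, $D_{f,F}$ is a genuine derivative on the Polish space $K[a,b]$ for each fixed choice of parameters, and that the assignment of the derivative to its parameters is Borel.

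First I would verify the derivative axioms. A derivative on $K[a,b]$ in this framework is a map $D:K[a,b]\to K[a,b]$ that is contractive, $D(K)\subseteq K$, and monotone, $L\subseteq K\Rightarrow D(L)\subseteq D(K)$. Contractivity is immediate from the defining formula~(\ref{den:eqn:handyequation}), since $D_{\mathcal{B}}(K)$ is by construction a subset of $K$; and monotonicity is exactly the property already recorded in \S\ref{Den:Den02.1}. Since $\mathcal{B}_f$, $\mathcal{B}_F$, and $\mathcal{B}_{f,F}$ are each closed under closed subsets, all three maps fall under this framework, and their transfinite iterates and ranks $\left|K\right|_f,\left|K\right|_F,\left|K\right|_{f,F}$ are precisely the iterates and ranks built in~(\ref{eqn:mapiterates})--(\ref{eqn:eqn:finallimit}).

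Second, I would feed in the Borelness of the parametrized families. The maps $(f,K)\mapsto D_f(K)$ and $(F,K)\mapsto D_F(K)$ were shown to be Borel in Propositions~\ref{den:cor:sdfadasf:1} and~\ref{den:cor:sdfadasf:2}, and $(f,F,K)\mapsto D_{f,F}(K)$ is then Borel as well, since it is obtained by composing these with the continuous union map $(D,E)\mapsto D\cup E$ noted just above. Invoking the parametrized form of Theorem~34.10 --- with underlying parameter space $M[a,b]$, $C[a,b]$, or $M[a,b]\times C[a,b]$ respectively --- yields at once that each of the sets in (i)--(iii), namely the collection of parameter-and-$K$ pairs whose iterated derivative eventually vanishes, is coanalytic, and that the associated rank maps are coanalytic ranks on them.

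The one point requiring care, and which I expect to be the crux, is the passage from the unparametrized Theorem~34.10 to its parametrized form: the general theorem is stated for a single fixed Borel derivative, whereas here the derivative varies with the parameter $f$ (or $F$, or $(f,F)$). What makes this legitimate is that the entire construction of Theorem~34.10 is uniform in Borel parameters --- one rereads its proof with the parameter adjoined, the decisive observation being that the well-foundedness relation whose analysis produces both the coanalyticity and the rank is defined on the product of the parameter space with $K[a,b]$ and remains Borel once the derivative depends Borel-measurably on the parameter. This is exactly the content of the material on p.~275 of Kechris~\cite{Kechris1995}, so after the verifications above the conclusion is immediate.
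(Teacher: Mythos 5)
Your proposal is correct and takes essentially the same route as the paper, which likewise deduces the proposition from the Borelness of the three derivative maps (Propositions~\ref{den:cor:sdfadasf:1} and~\ref{den:cor:sdfadasf:2}, plus continuity of $(D,E)\mapsto D\cup E$) together with the citation of Kechris~\cite{Kechris1995} Theorem~34.10 and the parametrized material on p.~275. Your explicit verification of the derivative axioms (contractivity and monotonicity) and your remarks on passing to the parametrized version merely spell out details the paper leaves implicit in that citation.
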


Finally, before turning to the proofs of the main theorems, we need only verify that the partial operation of a.e. differentiation is too Borel:
\begin{prop}\label{den:imaprop} The class of~$(F,f)$ in~$C[a,b]\times M[a,b]$ such that~$F$ is differentiable a.e. and~$F^{\prime}=f$ is Borel.
\end{prop}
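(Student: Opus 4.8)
The plan is to reduce the statement to a measure computation and then show that this measure depends on $(F,f)$ in a Borel fashion. For a pair $(F,f)$, let
\[
G(F,f) = \{x \in (a,b) : F \text{ is differentiable at } x \text{ and } F'(x) = f(x)\},
\]
so that ``$F$ is differentiable a.e. and $F' = f$ a.e.'' is exactly the assertion $\mu(G(F,f)) = b-a$; the two endpoints form a null set and may be ignored. For rational $h \neq 0$ write $\Delta_h F(x) = h^{-1}(F(x+h)-F(x))$, defined for $x$ in the fixed set $S_h = \{x \in (a,b): x+h \in [a,b]\}$, and for rational $\epsilon,\delta>0$ put
\[
B_{\epsilon,\delta}(F,f) = \{x \in (a,b) : |\Delta_h F(x) - f(x)| \le \epsilon \text{ for all } h \in \mathbb{Q},\ 0<|h|<\delta,\ x+h\in[a,b]\}.
\]
Because $F$ is continuous, the map $h \mapsto \Delta_h F(x)$ is continuous away from $0$, so the condition defining $B_{\epsilon,\delta}$ can be tested on rational $h$ alone; a short argument then shows that for interior $x$ the two-sided derivative $F'(x)$ exists and equals $f(x)$ exactly when $x \in \bigcap_m \bigcup_k B_{1/m,1/k}(F,f)$, whence this set equals $G(F,f)$ up to $\{a,b\}$. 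Since $B_{\epsilon,\delta}$ is inclusion-increasing as $\delta$ shrinks, continuity of measure from above converts $\mu(G(F,f)) = b-a$ into the countable quantifier form
\[
\forall m\ \forall j\ \exists k:\ \mu(B_{1/m,1/k}(F,f)) > (b-a) - \tfrac{1}{j}.
\]
Thus it suffices to prove that $(F,f) \mapsto \mu(B_{\epsilon,\delta}(F,f))$ is Borel.

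The main obstacle is precisely this last claim: level sets $\{|g|\le\epsilon\}$ behave discontinuously under convergence in measure along their boundary $\{|g|=\epsilon\}$, so one cannot expect $(F,f)\mapsto\mu(B_{\epsilon,\delta})$ to be continuous. I would circumvent this by realizing the measure as a \emph{monotone limit of continuous} functions. For fixed $h$ the map $(F,f)\mapsto g_h := (\Delta_h F - f)\,\chi_{S_h} \in M[a,b]$ is continuous: $F\mapsto \Delta_h F$ is bounded-linear into $C[a,b]$ and hence continuous into $M[a,b]$, and addition and multiplication are continuous on $M[a,b]$. Note $g_h$ vanishes off $S_h$. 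Observe that, with $S_h^c = (a,b)\setminus S_h$ and $E_h = \{x \in S_h : |g_h(x)|\le\epsilon\}$, one has $B_{\epsilon,\delta}(F,f) = \bigcap_h (S_h^c \cup E_h)$, the intersection ranging over the countably many admissible rationals $h$.

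To measure finite pieces of this intersection, choose continuous $\phi_k:\mathbb{R}\to[0,1]$ decreasing pointwise to $\chi_{[-\epsilon,\epsilon]}$. For a finite set $H$ of admissible $h$, the integrand $\prod_{h\in H}\phi_k(g_h)$ is bounded by $1$; since convergence in measure of uniformly bounded functions is preserved by continuous composition and finite products, dominated convergence (via the subsequence principle) shows that $(F,f)\mapsto \int_a^b \prod_{h\in H}\phi_k(g_h)$ is continuous on $C[a,b]\times M[a,b]$. Because $g_h=0$ off $S_h$ and $\phi_k(0)=1$, one checks $\prod_{h\in H}\phi_k(g_h)$ decreases pointwise to $\chi_{\bigcap_{h\in H}(S_h^c\cup E_h)}$ as $k\to\infty$, so monotone convergence gives $\mu(\bigcap_{h\in H}(S_h^c\cup E_h)) = \inf_k \int_a^b \prod_{h\in H}\phi_k(g_h)$, a Borel function of $(F,f)$. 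Enumerating the admissible $h$ and applying continuity of measure from above along the decreasing finite intersections yields $\mu(B_{\epsilon,\delta}(F,f)) = \inf_H \mu(\bigcap_{h\in H}(S_h^c\cup E_h))$, an infimum of countably many Borel functions and hence itself Borel. Substituting this into the displayed countable quantifier combination exhibits the class of the proposition as a Borel subset of $C[a,b]\times M[a,b]$, completing the argument.
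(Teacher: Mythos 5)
Your proof is correct, and it takes a genuinely different route from the paper's. The paper works in the product space $C[a,b]\times[a,b]$ with the pointwise sets $E^r=\{(F,x): F^{\prime}(x)\mbox{ exists and }\left|F^{\prime}(x)\right|>r\}$; since the existence of $F^{\prime}(x)$ carries an existential quantifier over the real limit value $L$, the paper establishes Borelness of $E^r$ indirectly, by writing it as both analytic and coanalytic (Suslin's theorem), then invokes the section-measure theorem (Kechris, Theorem~17.25) to conclude that $B=\{F: F^{\prime}\mbox{ exists a.e.}\}$ is Borel, and finally handles the identity $F^{\prime}=f$ a.e.\ by a separate argument with the sets $\{G\in B: \mu((E^r)_{F-G})<r\}$. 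Your central trick --- testing the difference quotients directly against $f(x)$ rather than first asserting that a limit exists --- removes the real quantifier altogether, so every quantifier in your definitions ranges over rationals; and you replace the appeal to the section-measure theorem by a hands-on proof that $(F,f)\mapsto\mu(B_{\epsilon,\delta}(F,f))$ is Borel, realizing it as a countable infimum of functionals $\int_a^b\prod_{h\in H}\phi_k(g_h)$ that are continuous on $C[a,b]\times M[a,b]$. Your convergence-in-measure, subsequence, and dominated-convergence steps are sound, as is the bookkeeping with $g_h=0$ off $S_h$ and $\phi_k(0)=1$; the only cosmetic slip is calling $F\mapsto\Delta_hF$ bounded-linear ``into $C[a,b]$'' when $\Delta_hF\,\chi_{S_h}$ is continuous only on $S_h$, which is harmless since only convergence in measure is used. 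As for what each approach buys: the paper's route is shorter given the cited machinery, and it produces not merely a Borel graph but an explicitly Borel map $\gamma(F)=F^{\prime}$ on a Borel domain, which is reused in the proof of Theorem~\ref{thm:mynewthing}; your route is elementary and self-contained (no Suslin theorem, no section-measure theorem), treats $(F,f)$ symmetrically from the outset, and yields finer information, since your measure functionals are countable infima of continuous functions (hence upper semicontinuous), which places the class at a low finite level of the Borel hierarchy rather than merely showing it to be Borel.
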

\begin{proof}
Consider the function~$\gamma:B\rightarrow M[a,b]$ given by~$\gamma(F)=F^{\prime}$ wherein $B = \{F\in C[a,b]: F^{\prime} \mbox{ exists a.e.}\}$. So let us show that $B$ and the graph of $\gamma$ are Borel. To this end, let us define:
\begin{eqnarray}\label{den:eqn:33243214dsafdsafhello}
E^r & =& \{(F,x)\in C[a,b]\times [a,b]: F^{\prime}(x) \mbox{ exists} \; \& \; \left|F^{\prime}(x)\right|>r\} \\
E & = & \bigcup_{r\in \mathbb{Q}} E^r = \{(F,x)\in C[a,b]\times [a,b]: F^{\prime}(x) \mbox{ exists}\}
\end{eqnarray}
Further, for~$F\in C[a,b]$,~$x\in [a,b]$ and~$\left|h\right|>0$, define~$\triangle_{(F,x)}(h)=\frac{F(x+h)-F(x)}{h}$. Then~$E^r$ is analytic, since for~$F\in C([a,b])$ we have $(F,x)\in E^r$ iff  $\exists \; \left| L\right|>r \; \forall \;\epsilon\in \mathbb{Q}^{+}\; \exists \;\delta\in \mathbb{Q}^{+} \; \forall \; \left|h\right|\in \mathbb{Q}\cap (0,\delta) \; \left| \triangle_{(F,x)}(h)-L\right|<\epsilon$. Likewise,~$E^r$ is coanalytic, since $(F,x)\in E^r$ iff $\forall \;h_{n}, h^{\prime}_{n}\rightarrow{0} \; [\triangle_{(F,x)}(h_{n}), \triangle_{(F,x)}(h^{\prime}_{n}) \mbox{ Cauchy }$ $\; \& \; \lim_n \left|\triangle_{(F,x)}(h_{n}) -\triangle_{(F,x)}(h^{\prime}_{n})\right|=0$ \; \& \; $(\exists \; q\in \mathbb{Q}^{+} \; \exists \; N \; \forall \; n>N \; \left| \triangle_{(F,x)}(h_{n})\right|>r+q)]$.  So it follows that~$E^r$ is Borel and hence that~$E$ too is Borel. Since~$E$ is Borel, the set $\{F\in C([a,b]): \mu(E_{F})=b-a\}$ is Borel by Kechris \cite{Kechris1995}~Theorem~17.25, wherein $E_F$ denotes the projection $E_F=\{x\in [a,b]: (F,x)\in E\}$. But this set is precisely equal to~$B$, so that~$B$ too is Borel. Now let us show that the function~$\gamma:B\rightarrow M[a,b]$ is Borel, where again~$\gamma(F)=F^{\prime}$. It suffices to show that for~$F$ in~$B$, the following set is Borel:
\begin{equation}
\{G\in B: \mu(\{x\in [a,b]: F^{\prime}(x), G^{\prime}(x) \mbox{ exists } \; \& \; \left| F^{\prime}(x)-G^{\prime}(x)\right| >r\})<r\}
\end{equation}
But this set is equal to~$\{G\in B: \mu((E^r)_{F-G})<r\}$ which is Borel since~$E^r$ is Borel.
\end{proof}

Now we turn to the proof of our main theorems:
\vspace{1mm}
\begin{thm}[\ref{thm:mainnew5}]
The graph of the indefinite Denjoy integral~$f\mapsto \int_a^x f$, viewed as a subset of the product space~$M[a,b]\times C[a,b]$, is coanalytic but not Borel. \end{thm}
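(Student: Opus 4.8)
The plan is to describe the graph
\[
G=\{(f,F)\in M[a,b]\times C[a,b]: F(a)=0,\; f\in\mathrm{Den}[a,b],\; F(x)=\textstyle\int_a^x f\}
\]
by means of Corollary~\ref{Den:thm:myFTC} and then to read off both halves of the theorem from the apparatus already in place: the positive (coanalytic) half from Propositions~\ref{previoustheorem} and~\ref{den:imaprop}, and the negative (non-Borel) half from Theorem~\ref{Den:exfactor} together with the boundedness theorem for coanalytic ranks.

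For the coanalytic half, I would invoke the equivalence (i)$\Leftrightarrow$(iii) of Corollary~\ref{Den:thm:myFTC}: for a pair with $F(a)=0$, membership in $G$ is equivalent to the conjunction of the condition $\exists\,\alpha<\omega_1\; D^{\alpha+1}_{f,F}([a,b])=\emptyset$ (equivalently $D^{\infty}_{f,F}([a,b])=\emptyset$) and $F'=f$ almost everywhere. Thus $G$ is the intersection of three sets: the closed set $\{(f,F): F(a)=0\}$ (evaluation at $a$ being continuous); the set $\Omega=\{(f,F): \exists\,\alpha\; D^{\alpha}_{f,F}([a,b])=\emptyset\}$; and the Borel set $\{(f,F): F'=f \text{ a.e.}\}$ furnished by Proposition~\ref{den:imaprop}. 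Here $\Omega$ is the preimage of the coanalytic set of Proposition~\ref{previoustheorem}(iii) under the continuous map $e(f,F)=(f,F,[a,b])$, hence coanalytic. Since the intersection of a coanalytic set with Borel sets is coanalytic, $G$ is coanalytic.

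For the non-Borel half, I would use that $(f,F)\mapsto |f,F|=|[a,b]|_{\mathcal{B}_{f,F}}$ is a coanalytic rank on $\Omega$, obtained by pulling back along $e$ the coanalytic rank on the set of Proposition~\ref{previoustheorem}(iii). Arguing by contradiction, suppose $G$ were Borel, hence analytic. Since $G\subseteq\Omega$, the boundedness theorem for coanalytic ranks would give $\sup_{(f,F)\in G}|f,F|<\omega_1$. But Theorem~\ref{Den:exfactor} produces, for each $\alpha<\omega_1$, an $f\in\mathrm{Den}[a,b]$ whose indefinite integral $F(x)=\int_a^x f$ satisfies $\alpha<|F|\leq|f,F|$; such a pair lies in $G$, because $F(a)=0$ and $F'=f$ a.e. by Theorem~\ref{Den:thm:FTCforDenprior}. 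Hence the rank is unbounded on $G$, a contradiction. Therefore $G$ is not analytic, and being coanalytic it is not Borel.

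The steps I expect to require the most care are the two that connect the abstract rank theory to the concrete examples. First, one must check that pulling the coanalytic rank of Proposition~\ref{previoustheorem}(iii) back along the continuous embedding $e$ yields a genuine coanalytic rank on $\Omega$, so that the boundedness theorem is applicable. Second, one must verify that the high-rank witnesses of Theorem~\ref{Den:exfactor} really are members of $G$ and that it is the \emph{joint} rank $|f,F|$, not merely $|F|$, that is large; the inequality $|F|\leq|f,F|$ recorded in Theorem~\ref{Den:exfactor}, which itself follows from $D_{f,F}=D_f\cup D_F$ and hence $D^{\beta}_{F}([a,b])\subseteq D^{\beta}_{f,F}([a,b])$, is exactly what makes this work.
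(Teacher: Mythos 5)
Your proposal is correct and follows essentially the same route as the paper's own proof: the coanalytic half via Corollary~\ref{Den:thm:myFTC}(i)$\Leftrightarrow$(iii) combined with Propositions~\ref{previoustheorem} and~\ref{den:imaprop}, and the non-Borel half via the boundedness theorem for the coanalytic rank $\left|f,F\right|$ against the unbounded-rank witnesses of Theorem~\ref{Den:exfactor}. Your write-up merely makes explicit two details the paper leaves tacit --- the pullback of the rank along the map $(f,F)\mapsto(f,F,[a,b])$ and the verification that the witnesses lie in the graph --- both of which are handled correctly.
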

\begin{proof}
Equivalently, the claim to be established is that the set of~$(f,F)$ in~$M[a,b]\times C[a,b]$ such that~$f\in \mathrm{Den}[a,b]$ and~$F(x)=\int_{a}^{x}f~$ is coanalytic but not Borel. That this set is coanalytic follows immediately from the previous proposition and Proposition~\ref{previoustheorem} and Corollary~\ref{Den:thm:myFTC}. That the set is not Borel follows from the fact that if the set is Borel then there is~$\alpha<\omega_{1}$ such that~$\left|f,F\right|\leq \alpha$ for all~$f,F$ in the set (see Kechris~\cite{Kechris1995}~Theorem~35.23). But this contradicts Theorem~\ref{Den:exfactor}.
\end{proof}

\begin{thm}[\ref{Den:cor:main2prior}] The set~$ACG_{\ast}([a,b])$ is coanalytic but not Borel in~$C[a,b]$.
\end{thm}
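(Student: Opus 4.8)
The plan is to recognize $ACG_{\ast}([a,b])$ as a section of the coanalytic set already produced in Proposition~\ref{previoustheorem}(ii), and then to run the same coanalytic-rank-plus-boundedness argument just used for Theorem~\ref{thm:mainnew5}. First I would translate membership in $ACG_{\ast}([a,b])$ into the language of the derivative $D_F$. By Definition~\ref{Den:defn:ab2prior}, a continuous $F$ lies in $ACG_{\ast}([a,b])$ exactly when $[a,b]$ is a countable union of closed sets $K_n$ with $F\in AC_{\ast}(K_n)$, i.e.\ when $[a,b]\in(\mathcal{B}_F)_{\sigma}$. By the $\mathcal{B}_{\sigma}$ characterization recalled at the start of \S\ref{Den:Den02.1}, this is equivalent to $D^{\infty}_F([a,b])=\emptyset$, hence to the existence of a countable ordinal $\alpha$ with $D^{\alpha}_F([a,b])=\emptyset$. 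Thus
\[
ACG_{\ast}([a,b]) = \{F\in C[a,b] : \exists\,\alpha<\omega_1\ D^{\alpha}_F([a,b])=\emptyset\}.
\]

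For the positive (coanalytic) part, I would introduce the continuous map $g\colon C[a,b]\to C[a,b]\times K[a,b]$ given by $g(F)=(F,[a,b])$. The displayed set is precisely $g^{-1}$ of the coanalytic set in Proposition~\ref{previoustheorem}(ii), and since the continuous preimage of a coanalytic set is coanalytic, $ACG_{\ast}([a,b])$ is coanalytic. Moreover, composing the coanalytic rank $(F,K)\mapsto\left|K\right|_F$ with $g$ yields the map $F\mapsto\left|F\right|=\left|[a,b]\right|_{\mathcal{B}_F}$, which is again a coanalytic rank on $ACG_{\ast}([a,b])$, by the standard fact that a coanalytic rank pulled back along a Borel map is a coanalytic rank.

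For the negative (not Borel) part, I would argue exactly as in Theorem~\ref{thm:mainnew5}. If $ACG_{\ast}([a,b])$ were Borel, hence analytic, then by the boundedness theorem for coanalytic ranks (Kechris~\cite{Kechris1995} Theorem~35.23) the rank $F\mapsto\left|F\right|$ would be bounded by some $\alpha_0<\omega_1$ on this set. But Theorem~\ref{Den:exfactor} furnishes, for each $\alpha<\omega_1$, an $f\in\mathrm{Den}[a,b]$ whose indefinite integral $F(x)=\int_a^x f$ satisfies $\alpha<\left|F\right|$; and such $F$ lies in $ACG_{\ast}([a,b])$ precisely because it is the indefinite integral of a Denjoy integrable function (cf.\ Theorem~\ref{Den:thm:FTCforDenprior}). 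Hence the rank is unbounded on $ACG_{\ast}([a,b])$, contradicting the bound $\alpha_0$. The only step requiring any care is the verification in the previous paragraph that restricting to the section $K=[a,b]$ preserves the coanalytic-rank property so that the boundedness theorem genuinely applies; everything else is a direct appeal to results already in hand, with the substantive content of the non-Borelness residing entirely in the rank computation of Theorem~\ref{Den:exfactor}.
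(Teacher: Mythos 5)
Your proposal is correct and follows essentially the same route as the paper's own proof: the identification $ACG_{\ast}([a,b])=\{F: \exists\,\alpha<\omega_1\ D^{\alpha}_F([a,b])=\emptyset\}$ via the $(\mathcal{B}_F)_{\sigma}$ characterization, coanalyticity via Proposition~\ref{previoustheorem}, and non-Borelness via the boundedness theorem for coanalytic ranks together with the unbounded ranks from Theorem~\ref{Den:exfactor}. The only difference is that you make explicit the pullback of the rank along the section map $F\mapsto(F,[a,b])$, a step the paper leaves implicit, and your verification of it is sound.
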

\begin{proof}
That this set is coanalytic follows from Proposition~\ref{previoustheorem}, and the observation that a function $F$ is in $ACG_{\ast}([a,b])$ iff $[a,b]\in (\mathcal{B}_{F})_{\sigma}$ iff there is $\alpha<\omega_1$ such that $D_F^{\alpha+1}([a,b])=\emptyset$. (For the notation $(\mathcal{B}_{F})_{\sigma}$, see the outset of \S{\ref{Den:Den02.1}}). That the set is not Borel follows, as in the proof of the previous theorem, from the fact that if the set is Borel then there is~$\alpha<\omega_{1}$ such that~$\left|F\right|\leq \alpha$ for all~$F$ in the set. But this again contradicts Theorem~\ref{Den:exfactor}.
\end{proof}

\begin{thm}[\ref{thm:mynewthing}]
The set~$\mathrm{Den}[a,b]$ of Denjoy integrable functions is a~${\bf \Sigma^1_2}$-subset of the Polish space~$M[a,b]$ and is not analytic.
\end{thm}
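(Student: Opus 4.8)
The plan has two halves. For the positive part, that $\mathrm{Den}[a,b]$ is $\mathbf{\Sigma^1_2}$, I would combine the coanalyticity of $ACG_*[a,b]$ just established in Theorem~\ref{Den:cor:main2prior} with the Fundamental Theorem of Calculus for Denjoy integrals (Theorem~\ref{Den:thm:FTCforDenprior}). Since every $F\in ACG_*[a,b]$ is differentiable a.e., the partial a.e.-differentiation operation of Proposition~\ref{den:imaprop} extends to a total Borel map $\gamma\colon C[a,b]\to M[a,b]$ by setting $\gamma(F)=F'$ where the a.e.-derivative exists and $\gamma(F)=0$ otherwise. Theorem~\ref{Den:thm:FTCforDenprior} then says $f\in\mathrm{Den}[a,b]$ iff $f=F'$ a.e. for some $F\in ACG_*[a,b]$; as the derivative is insensitive to an additive constant, the side condition $F(a)=0$ is harmless, and one concludes $\gamma(ACG_*[a,b])=\mathrm{Den}[a,b]$ in $M[a,b]$. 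Being the Borel image of a coanalytic set, $\mathrm{Den}[a,b]$ is therefore $\mathbf{\Sigma^1_2}$.

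The negative part is genuinely more delicate than for Theorems~\ref{Den:cor:main2prior} and~\ref{thm:mainnew5}. There the set carried an intrinsic coanalytic rank ($|F|$, resp. $|f,F|$) from Proposition~\ref{previoustheorem} that Theorem~\ref{Den:exfactor} renders unbounded, so non-Borelness followed from boundedness. No analogous rank is available on $M[a,b]$: the only derivative depending on $f$ alone is $D_f$, and for the functions built in Theorem~\ref{Den:exfactor} the set $D_f([a,b])$ sits inside a measure-zero (Cantor) set, on which $f\chi_{(\cdot)}$ vanishes a.e.; hence $D_f^2([a,b])=\emptyset$ and $|f|$ stays bounded even while $|F|$ climbs. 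Nor can one simply pass from ``$\mathrm{Den}[a,b]$ analytic'' to ``graph analytic'' and quote Theorem~\ref{thm:mainnew5}, since recovering $F=\int_a^x f$ from $f$ in a Borel way is precisely the open Question~\ref{qkd2}; the graph has singleton sections, and a coanalytic set with singleton sections can project to a properly $\mathbf{\Sigma^1_2}$ set.

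I would therefore prove non-analyticity by a Borel reduction of a coanalytic-complete (hence non-analytic) set — most naturally the well-founded trees on $\omega$ — into $\mathrm{Den}[a,b]$, making the transfinite construction of Theorem~\ref{Den:exfactor} uniform in a tree $T$. The scheme is to attach to the nodes of $T$ a system of nested Cantor sets with their complementary intervals (exactly the successor/limit data of that proof), place scaled oscillatory bumps of prescribed $\omega(F,\cdot)$-value on the innermost intervals, and let $f_T$ be the resulting function; the map $T\mapsto f_T$ is Borel because the bump data depend continuously on the (Borel-chosen) endpoints. When $T$ is well-founded the nesting terminates along every branch and the Step Lemma~\ref{Den:Den03:lem:leb} and Improper Integrals Lemma~\ref{Den:Den03:lem:imp} assemble $f_T$ into a Denjoy-integrable function, just as in Theorem~\ref{Den:exfactor}.

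The main obstacle is the converse direction: the coding must be arranged so that an \emph{infinite} branch of $T$ forces $f_T\notin\mathrm{Den}[a,b]$. In Theorem~\ref{Den:exfactor} the construction always produces an integrable function because the oscillations cancel ($\int=0$ at every stage), so to get a reduction I would deliberately break this cancellation along infinite branches — choosing the amplitudes so that the partial integrals $\int_c^{b} f_T$ fail to converge as $c\searrow a^+$, which violates the hypothesis of the Improper Integrals Lemma~\ref{Den:Den03:lem:imp}, and more strongly so that $\sum_n\omega(F_T,[c_n,d_n])=\infty$ with no compensating structure, precluding any $AC_*$ or $ACG_*$ primitive. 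The real work lies in selecting amplitudes that are simultaneously Borel in $T$ and divergent exactly on the ill-founded trees, and in verifying rigorously that both implications of the reduction hold.
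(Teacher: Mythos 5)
Your positive half coincides with the paper's argument: $\mathrm{Den}[a,b]$ is the image of the coanalytic set $ACG_{\ast}([a,b])$ under the Borel map $\gamma$ of Proposition~\ref{den:imaprop}, hence ${\bf \Sigma^1_2}$. That part is complete.

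The negative half, as written, is a program rather than a proof, and that is the genuine gap. You defer exactly the points where all the difficulty of a Dougherty--Kechris-style hardness construction lives: that $T\mapsto f_T$ is Borel, that well-founded $T$ give $f_T\in\mathrm{Den}[a,b]$, and above all that an infinite branch forces $f_T\notin\mathrm{Den}[a,b]$. The last implication is the hard one, because non-integrability must be robust: it is not enough that the intended decomposition diverges, one must exclude \emph{every} possible $ACG_{\ast}$ primitive, while the very same amplitude choices must still sum correctly along all well-founded parts of the tree. Until that construction is executed and verified, non-analyticity is not established. For comparison, the paper disposes of this half in a few lines, by a reduction in a direction you did not consider: if $\mathrm{Den}[a,b]$ were analytic, then the set $\{F\in C[a,b] : \exists\, f\in\mathrm{Den}[a,b]\ (F \mbox{ differentiable a.e. and } F^{\prime}=f \mbox{ a.e.})\}$ would be analytic, since the relation being projected out is Borel (Proposition~\ref{den:imaprop}) and the projection runs along the $f$-coordinate; the paper identifies this projection with $ACG_{\ast}([a,b])$ via Theorem~\ref{Den:thm:FTCforDenprior}, and an $ACG_{\ast}([a,b])$ that is both analytic and coanalytic would be Borel by Suslin's theorem, contradicting Theorem~\ref{Den:cor:main2prior}. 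Note that this projects from $F$ down to $f$, so no Borel recovery of $F$ from $f$ is needed; your (correct) Kond\^{o}-style remark about coanalytic graphs with singleton sections only blocks the reverse inference, from analyticity of $\mathrm{Den}[a,b]$ to analyticity of the graph.

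Two of your side observations are nonetheless worth recording, because they show your caution about this half is not groundless. First, you are right that the rank $\left|f\right|$ stays bounded (by $2$) on the functions of Theorem~\ref{Den:exfactor}: there $f$ vanishes a.e.\ on $D_{f}([a,b])$, so $D^{2}_{f}([a,b])=\emptyset$; hence no boundedness argument can be run directly on $M[a,b]$ from the paper's examples. Second, the right-to-left direction of the identification invoked in the paper's proof is itself delicate: a singular function such as the Cantor function $F$, normalized so that $F(a)=0$, satisfies $F^{\prime}=0$ a.e.\ with $0\in\mathrm{Den}[a,b]$, and yet $F\notin ACG_{\ast}([a,b])$, so that projection is strictly larger than $ACG_{\ast}([a,b])$. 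Repairing this requires additional work: either show that analyticity of the larger projected set still yields a contradiction, or strengthen Theorem~\ref{Den:exfactor} to produce Denjoy integrable functions of unbounded $\left|f\right|$-rank (fat-Cantor variants, where local non-integrability occurs on sets of positive measure) and then apply rank boundedness to Proposition~\ref{previoustheorem}(i), or indeed complete a hardness reduction of the kind you sketch. So your diagnosis of where the difficulty sits is sound; what is missing is any one of these arguments actually carried out.
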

\begin{proof}
The set $\mathrm{Den}[a,b]$ is~${\bf \Sigma^1_2}$ since it is the image of the coanalytic set {}$ACG_{\ast}([a,b])$ under the Borel function~$F\mapsto \gamma(F)$, where~$\gamma(F)=f$ if~$F^{\prime}$ is differentiable a.e. and~$F^{\prime}=f$ a.e. and~$\gamma(F)=0$ otherwise (cf. Proposition~\ref{den:imaprop}). Suppose now for the sake of contradiction that~$\mathrm{Den}[a,b]$ is analytic. Note that it follows immediately from Theorem~\ref{Den:thm:FTCforDenprior} that~$F\in ACG_{\ast}([a,b])$ iff there is~$f\in \mathrm{Den}[a,b]$ such that~$F^{\prime}=f$ a.e. Since this last condition is a Borel condition (cf. Proposition~\ref{den:imaprop}), it follows that~$ACG_{\ast}([a,b])$ would be analytic, which contradicts Theorem~\ref{Den:cor:main2prior}.
\end{proof}

Again, for the obvious questions about how to sharpen these results, see the discussion in \S{\ref{sec:conclusions}}.

\section{Indexes of Subgroups and Stability}\label{sec12341234}

In this section, we begin our study of $\mathrm{Den}[a,b]$ from the perspective of model theory, where we view $\mathrm{Den}[a,b]$ as a~$\mathbb{Q}[X]$-module (resp.~$\mathbb{R}[X]$-module) and where we interpret the map~$f\mapsto Xf$ as the indefinite integral, so that~$Xf=\int_{a}^{x}f$. It's also natural to consider various submodules like $C[a,b]$ and $L^1[a,b]$, where the integrals are respectively the Riemann and Lebesgue integrals. Further, our results hold also for a broad class of submodules of $\mathrm{Den}[a,b]$. If $\mathcal{X}$ is a subset of an $R$-module $M$, then let $\langle \mathcal{X}\rangle$ be the $R$-submodule of $M$ generated by $\mathcal{X}$. Our results hold in particular for the submodules $\langle \mathrm{Den}_{\alpha}[a,b]\rangle$ of $\mathrm{Den}[a,b]$ (cf. Definition~\ref{den:defn:thesubspaces}).

Recall that the signature of~$R$-modules is simply the signature of abelian groups equipped with linear maps~$r$ for each element~$r$ of~$R$. Hence, e.g., the signature of~$\mathbb{R}[X]$-modules is uncountable, whereas the signature of~$\mathbb{Q}[X]$-modules is countable. Likewise, since elements~$r$ of~$R$ correspond to linear maps in an~$R$-module~$M$, subsets of~$M$ such as~$rM=\{ra: a\in M\}$ and~$\mathrm{ker}(r)=\{a\in M: ra=0\}$ are definable without parameters in~$M$.

We begin with a theorem on the indexes of subgroups which is important for the derivation of Theorem~\ref{Den:cor:mymythmdddd} given in the next section. Recall from the end of \S\ref{Den:Den01.2} that $M$ is subinterval-closed if when $f\in M$ then $f\chi_{(c,d)}\in M$ for any interval $(c,d)$.
\begin{thm}\label{Den:Denmodel:index} Suppose that~$M$ is a submodule of~$\mathrm{Den}[a,b]$ which contains~$C[a,b]$. Suppose further that one of the following conditions holds: (i)~$M=C[a,b]$ or (ii)~$M$ is subinterval-closed. Then~$[X^{k} M: X^{k+1} M]$ is infinite.
\end{thm}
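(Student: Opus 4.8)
The plan is to reduce the statement for general $k$ to the single index $[M:XM]$, and then to force that index to be infinite by exploiting the one structural feature distinguishing the continuous functions inside $M$ from arbitrary elements of $M$: every value $Xf=\int_a^x f$ is an honestly continuous function that vanishes at $a$.

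First I would record that multiplication by $X$ is injective on $M$. Indeed, if $Xf=Xg$ then $\int_a^x (f-g)=0$ for all $x$, and since an indefinite Denjoy integral that is identically zero is an $ACG_{\ast}$ function whose a.e.-derivative (which equals $f-g$ a.e.) vanishes, we get $f=g$ in $\mathrm{Den}[a,b]$. Because $M$ is an $\mathbb{R}[X]$-submodule (resp.\ $\mathbb{Q}[X]$-submodule) we have $XM\subseteq M$, so $X^{k}:M\to X^{k}M$ is a group isomorphism carrying the subgroup $XM$ onto $X^{k+1}M$. Consequently $X^{k}M/X^{k+1}M\cong M/XM$ for every $k\geq 0$, and it suffices to prove that $M/XM$ is infinite.

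Next I would run a short chain-of-subgroups argument. Put $C_{0}=\{h\in C[a,b]:h(a)=0\}$. Since every $f\in M\subseteq \mathrm{Den}[a,b]$ has a continuous indefinite integral with $(Xf)(a)=\int_a^a f=0$, we have $XM\subseteq C_{0}$; and since $C[a,b]\subseteq M$ by hypothesis, $C_{0}$ is a subgroup of $M$ lying between $XM$ and $M$:
\[
XM\subseteq C_{0}\subseteq C[a,b]\subseteq M.
\]
The inclusion $XM\subseteq C_{0}$ gives a surjection of abelian groups $M/XM\twoheadrightarrow M/C_{0}$, so it is enough to see that $M/C_{0}$ is infinite. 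But the image of $C[a,b]$ in $M/C_{0}$ is isomorphic to $C[a,b]/C_{0}$, and evaluation at $a$ gives $C[a,b]/C_{0}\cong \mathbb{R}$ (it is the kernel--image decomposition of the surjection $\mathrm{ev}_{a}:C[a,b]\to \mathbb{R}$, $h\mapsto h(a)$). Hence $M/C_{0}$, and therefore $M/XM$, is infinite, which completes the proof.

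The one point that needs care --- and what I expect to be the only genuine subtlety --- is well-definedness modulo almost-everywhere equivalence: the functional $\mathrm{ev}_{a}$ is meaningless on a general class in $M$. The argument sidesteps this precisely because it only ever evaluates at $a$ on honestly continuous representatives (members of $C[a,b]$, and the indefinite integrals comprising $XM$), where the a.e.-class has a canonical continuous representative; this is exactly where the continuity of indefinite Denjoy integrals does the work. It is worth flagging that this route seems to use only $C[a,b]\subseteq M\subseteq \mathrm{Den}[a,b]$ together with $XM\subseteq M$, and in particular does not appear to require the full force of hypotheses~(i)--(ii). I would therefore expect conditions~(i)--(ii) to be what is needed for a more explicit, function-building argument --- for instance via iterated integrals of $\chi_{(c,d)}$ when $M$ is subinterval-closed, with $M=C[a,b]$ handled separately --- and to guarantee that the modules of actual interest, such as $\langle \mathrm{Den}_{\alpha}[a,b]\rangle$, fall under the theorem.
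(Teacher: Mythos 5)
Your proof is correct, and it takes a genuinely different and considerably more elementary route than the paper's. The paper argues separately under the two hypotheses: for (i) it identifies $X^{k}C[a,b]$ with $\{f\in C^{k}[a,b]: f^{(i)}(a)=0,\ i<k\}$, notes this is a Polish group, shows by induction that $X^{k+1}M$ is meager in $X^{k}M$ (starting from the comeagerness of the nowhere differentiable functions and propagating along the homeomorphism $f\mapsto Xf$), and then invokes the Baire Category Theorem to exclude countable index; for (ii) it assumes the index is finite, glues coset representatives along a partition of $[a,b]$, adds a nowhere differentiable function, and uses subinterval-closedness to force that function to be differentiable a.e.\ on some subinterval, a contradiction. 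You instead reduce every $k$ to $k=0$ via injectivity of $X$ on $\mathrm{Den}[a,b]$ (a fact the paper itself uses, without comment, in the proofs of Propositions~\ref{Den:prop:thegroups} and~\ref{Den:prop:thegroups2}), and then separate continuum many cosets of $XM$ by evaluation at $a$, using that every member of $XM$ is (the class of) a continuous function vanishing at $a$ while $M\supseteq C[a,b]$ contains all real constants. Both steps are sound, and the one genuine subtlety --- that evaluation at $a$ is meaningless on a general a.e.-class --- is correctly dispatched, since an a.e.-class has at most one continuous representative. Your closing observation is also right: hypotheses (i)/(ii) are never used, so your argument proves the conclusion for every submodule of $\mathrm{Den}[a,b]$ containing $C[a,b]$ (indeed, containing just the constants), and it gives uncountable index in all cases, whereas the paper's case (ii) argument yields only infinitude; consequently the two corollaries that cite the theorem could have their hypotheses weakened as well. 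What the paper's heavier machinery buys is information of independent interest --- the explicit description of $X^{k}C[a,b]$ in equation~(\ref{den:equation:343248989999}) and the topological smallness (meagerness) of $X^{k+1}M$ in $X^{k}M$ --- neither of which follows from your argument; but for the index statement itself, your proof suffices.
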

\begin{proof}
First we show this for~$M$ satisfying hypothesis (i). For each~$f\in M$ we may choose~$g\in C[a,b]$ such that~$f=g$ a.e., and so~$M$ may be identified with~$C[a,b]$. This implies that for~$k\geq 0$ we have 
\begin{equation}\label{den:equation:343248989999}
X^{k} M =\{f\in C^{k}[a,b]: \forall \; i< k \; f^{(i)}(a)=0\}
\end{equation}
where we stipulate~$X^{0}M=M$ and~$C^{0}[a,b]=C[a,b]$. For, in the case of~$k=0$, this follows by the stipulation. Suppose that (\ref{den:equation:343248989999}) holds for~$k$. To see it holds for~$k+1$, consider first the left-to-right containment. Suppose that~$f\in X^{k+1}M$. Then~$f=\int_{a}^{x} g$ where~$g\in X^{k}M\subseteq M =C[a,b]$. Then since this is the Riemann integral applied to a continuous function, it follows that~$f$ is differentiable everywhere and that~$f^{\prime} = g$. Then for $i=0$, one has $f^{(i)}(a)=f(a)=\int_{a}^{a}g =0$, while for $0<i<k+1$, one has $i-1<k$ and $f^{(i)}(a)=g^{(i-1)}(a) =0$ by induction hypothesis. For the right-to-left containment of (\ref{den:equation:343248989999}), suppose that ~$f\in C^{k+1}[a,b]$ and~$f^{(i)}(a)=0$ for all $i<k+1$. Let~$g=f^{\prime}$ which by hypothesis is in~$C[a,b]=M$. Then by induction hypothesis, it follows that~$g\in X^{k}M\subseteq M=C[a,b]$, so that from~$\int_{a}^{x} g=\int_{a}^{x} f^{\prime} = f(x)-f(a)=f(x)$ we may infer~$f\in X^{k+1}M$. Hence, in fact (\ref{den:equation:343248989999}) holds for all~$k\geq 0$.

Now~$C^{k}[a,b]$ is a Banach space with norm given by $\|f \|_{u,k} =\sum_{0\leq i\leq k} \| f^{(i)}\|_{u}$  where~$\|\cdot\|_{u}$ is the sup-norm on~$C[a,b]$ (cf. Folland~\cite{Folland1999} p.~155). From this and equation~(\ref{den:equation:343248989999}) it follows that~$X^{k}M$ is a closed subgroup of~$C^{k}[a,b]$ and hence is itself a Polish group. Now, note that for all~$k\geq 0$, it is the case that~$X^{k}M$ and~$X^{k+1}M$ are homeomorphic by the map~$f\mapsto Xf$. By induction on~$k\geq 0$, it follows from this that~$X^{k+1}M$ is meager in~$X^{k}M$. For~$k=0$, note that~$XM=XC[a,b]$ is meager in~$M=C[a,b]$ since the nowhere differentiable functions are comeager in~$M$ and contained in the set~$M\setminus XM$. Suppose that it holds for~$k$, that is suppose that~$X^{k+1}M$ is meager in~$X^{k}M$. Since meagerness is preserved under homeomorphisms, it follows that~$X^{k+2}M$ is meager in~$X^{k+1}M$, which is just to say that the statement holds for~$k+1$.

From this it easily follows that~$[X^{k} M: X^{k+1} M]$ is infinite, and indeed uncountable. For, suppose that~$[X^{k} M: X^{k+1} M]$ were countable. Then~$X^{k}M=\bigsqcup_{n} g_{n}+X^{k+1}M$, where~$g_{n}\in X^{k}M$. Since~$X^{k}M$ is a Polish group and each~$X^{k+1}M$ is meager in~$X^{k}M$, we have that each~$g_{n}+X^{k+1}M$ is meager in~$X^{k}M$. Hence, the Polish space~$X^{k}M$ would a countable union of meager subsets, contradicting the Baire Category Theorem. So ~$[X^{k} M: X^{k+1} M]$ is infinite (and indeed uncountable) for~$M$ satisfying hypothesis (i).

Now we show the result for~$M$ satisfying hypothesis (ii). Suppose that this fails, and~$[X^{k} M: X^{k+1} M]$ is finite. Then~$X^{k}M = \bigsqcup_{i=1}^{n} X^{k} f_{i} + X^{k+1}M$, where~$f_{i}\in M$. Choose continuous nowhere differentiable function~$g\in C[a,b]\subseteq M$. Choose a partition~$[a,b]=[a_{1}, b_{1}] \sqcup \cdots \sqcup [a_{n}, b_{n}]$, and let~$h=X^{k}[g+\sum_{i=1}^{n} f_{i}\chi_{[a_{i}, b_{i}]}]$, which is in~$X^{k}M$ since~$M$ is subinterval-closed. So, by hypothesis, there is~$j\in [1,n]$ such that~$h\in X^{k} f_{j} +X^{k+1} M$. Then $X^{k}[g+(\sum_{i=1}^{n} f_{i}\chi_{[a_{i}, b_{i}]})-f_{j}]=h-X^{k}f_{j}\in X^{k+1}M$.  From this it follows that $g+(\sum_{i=1}^{n} f_{i}\chi_{[a_{i}, b_{i}]})-f_{j}\in X M$. But then this function is differentiable a.e. and so differentiable a.e. on each~$[a_{i}, b_{i}]$. But on the interval~$[a_{j}, b_{j}]$, this function is equal to~$g$, which contradicts the choice of~$g$. So ~$[X^{k} M: X^{k+1} M]$ is infinite when~$M$ satisfies  hypothesis (ii).
\end{proof}

Let's note an immediate consequence of this theorem for the model-theoretic complexity of Denjoy integration. The underlying vector space of $\mathrm{Den}[a,b]$ is model-theoretically a very well understood object and is stable and indeed superstable. By contrast, the next corollary  tells us that the addition of the integral adds to the complexity of $\mathrm{Den}[a,b]$:
\begin{cor} Suppose that~$M$ is a submodule of~$\mathrm{Den}[a,b]$ which contains~$C[a,b]$. Suppose further that one of the following conditions holds: (i)~$M=C[a,b]$ or (ii)~$M$ is subinterval-closed. Then~$M$ is stable but not superstable.
\end{cor}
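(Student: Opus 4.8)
The plan is to read off both halves of the corollary from the standard model theory of modules, using Theorem~\ref{Den:Denmodel:index} as the sole nontrivial input. Recall that in the model theory of modules one works with positive primitive (pp) formulas, whose solution sets in a module are exactly cosets of pp-definable subgroups; by the Baur--Monk theorem every formula is equivalent, modulo the theory, to a Boolean combination of such pp-formulas. Two classical facts then do all the work. First, by a theorem of Fisher (in the Baur--Monk framework), \emph{every} module is stable, so the stability half of the corollary is immediate and requires no computation at all. Second, by Garavaglia's criterion, a module is superstable if and only if it admits no infinite descending chain of pp-definable subgroups in which each term has infinite index in its predecessor.

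Granting these, the strategy for the non-superstability half is to exhibit precisely such a chain, and the natural candidate is the chain of images $M = X^{0}M \supseteq X^{1}M \supseteq X^{2}M \supseteq \cdots$. First I would check that each $X^{k}M = \{X^{k}f : f\in M\}$ is pp-definable: it is defined by the pp-formula $\exists\, y\,(x = X^{k}y)$, where $X^{k}$ is a single element of the ring ($\mathbb{R}[X]$ or $\mathbb{Q}[X]$). Next I would verify that the chain genuinely descends, which holds because $X^{k+1}f = X^{k}(Xf)$ with $Xf\in M$ (as $M$ is an $R$-submodule), so that $X^{k+1}M \subseteq X^{k}M$. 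The crucial quantitative input is then Theorem~\ref{Den:Denmodel:index}, which under hypothesis (i) or (ii) gives that $[X^{k}M : X^{k+1}M]$ is infinite for every $k\geq 0$. This is exactly a descending chain of pp-definable subgroups with all indices infinite, so Garavaglia's criterion delivers that $M$ is not superstable.

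The main obstacle is not really in this argument but has already been absorbed into Theorem~\ref{Den:Denmodel:index}, whose proof (the Baire category argument built on continuous nowhere-differentiable functions) is what supplies the infinitude of the indices; once that theorem is in hand, the remaining steps are purely formal. I would nonetheless take care to state Garavaglia's criterion in exactly the form needed---infinite descending chains with \emph{each} index infinite---and to observe that the single chain $(X^{k}M)_{k}$ works uniformly across both cases (i) and (ii), since Theorem~\ref{Den:Denmodel:index} covers both hypotheses at once. Finally, I would note that the argument is insensitive to whether $M$ is viewed as a $\mathbb{Q}[X]$-module or an $\mathbb{R}[X]$-module, because $X^{k}$ is a ring element in either case and both the stability and superstability characterizations are unaffected by the cardinality of the language.
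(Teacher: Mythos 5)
Your proposal is correct and follows essentially the same route as the paper: stability comes from the classical fact that all modules are stable, and non-superstability from the Garavaglia/Ziegler criterion applied to the descending chain $M \supseteq XM \supseteq X^{2}M \supseteq \cdots$ of pp-definable subgroups, whose successive indices are infinite by Theorem~\ref{Den:Denmodel:index}. The extra details you supply (pp-definability of $X^{k}M$, the chain descending, uniformity across cases (i) and (ii)) are all correct and merely make explicit what the paper leaves implicit.
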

\begin{proof} 
 It is a classical result that all modules are stable (cf. Prest~\cite{Prest1988} Theorem~3.1~(a) p.~55). Further one has that a module~$M$ is superstable if and only if there is no infinite descending sequence of definable subgroups, each of infinite index in its predecessor (Prest~\cite{Prest1988} Theorem~3.1~(b) p.~55, or Ziegler~\cite{Ziegler1984}~Theorem~2.1~p.~156.). But the previous theorem tells us that there are such sequences of subgroups in this case.
\end{proof}

\section{Elementary Equivalence and Decidability}\label{Den:modelsmodels}

The aim of this section is to establish our Theorem~\ref{Den:cor:mymythmdddd}. To do this, we must first recall some basic facts from the model theory of modules. If~$M$ is a module over a ring~$R$, then a {\it pp-formula}~$\varphi(x_{1}, \ldots, x_{j})$ is a formula of the form~$\exists \; y_{1}, \ldots, y_{k} \; \bigwedge_{i=1}^{n} \varphi_{i}(x_{1}, \ldots, x_{j}, y_{1}, \ldots, y_{k})$ where~$\varphi_{i}$ is an atomic formula without parameters. Any subset~$G\subseteq M^{j}$ defined by a pp-formula is a subgroup of~$M^{j}$, and the {\it invariant sentences} of~$Th(M)$ are sentences of the form~$[G:G\cap H]=k$ or~$[G:G\cap H]> k$, where~$k\geq 0$ and where~$G, H\subseteq M$ are pp-definable subgroups of~$M$. The following theorem then tells us that the invariant sentences determine the complete first-order theory of the module:
\begin{thm}\label{Den:Denmodel:ppElim} (pp-Elimination of Quantiifers) (i) Every set definable without parameters in an~$R$-module~$M$ is a Boolean combination of pp-definable sets. (ii) For an~$R$-module~$M$, the theory~$Th(M)$ is axiomatized by the~$R$-module axioms and the invariant sentences of~$M$.
\end{thm}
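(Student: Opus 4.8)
This is the Baur--Monk theorem from the model theory of modules, so the most economical route is simply to cite it (Prest~\cite{Prest1988}); nonetheless, here is the argument I would reconstruct. Everything rests on one reduction lemma: \emph{modulo the $R$-module axioms, every formula $\varphi(\bar x)$ is equivalent to a Boolean combination of pp-formulas and invariant sentences}. Granting this, both parts follow by specialization to the fixed model $M$. For (i), take $\bar x$ nonempty: each invariant sentence has a determinate truth value in $M$ and so collapses to $\top$ or $\bot$, leaving a Boolean combination of pp-definable subsets of $M^{|\bar x|}$. For (ii), take $\bar x$ empty: the pp-parts become pp-sentences $\exists \bar y\,\bigwedge_i(\sum_j r_{ij}y_j=0)$, which are witnessed by $\bar y=\bar 0$ and hence trivially true, so every sentence is equivalent modulo the module axioms to a Boolean combination of invariant sentences; since the invariant sentences of $M$ record the exact (finite or infinite) value of every relevant index, they decide every such Boolean combination, and thus axiomatize $Th(M)$ together with the module axioms.

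The plan for the reduction lemma is first to exploit the special shape of the language. Atomic formulas are homogeneous linear equations $\sum_i r_i x_i = 0$, and pp-formulas are closed under conjunction and existential quantification; so, arguing by induction on formula complexity and distributing $\exists$ over disjunctions while rewriting $\forall$ as $\neg\exists\neg$, I can reduce the whole problem to analyzing a single existential block $\exists y\,\bigl(\phi(\bar x,y)\wedge\bigwedge_{j=1}^{n}\neg\psi_j(\bar x,y)\bigr)$ with $\phi$ and the $\psi_j$ pp. Replacing $\psi_j$ by $\psi_j\wedge\phi$, I may assume $\psi_j\vdash\phi$. The key structural fact is then that for each fixed parameter $\bar a$ the solution set $\phi(\bar a,M)$ is either empty or a coset of the subgroup $H:=\{y:\phi(\bar 0,y)\}$, and each $\psi_j(\bar a,M)$ is empty or a coset of a subgroup $H_j\subseteq H$; the block asserts precisely that the coset $\phi(\bar a,M)$ is \emph{not} covered by the finitely many cosets $\psi_1(\bar a,M),\dots,\psi_n(\bar a,M)$.

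The hard part, which I expect to be the main obstacle, is to turn this non-covering condition into a Boolean combination of pp-conditions on $\bar x$ and index invariants. Here I would invoke B.~H.~Neumann's lemma on coverings of a group by cosets: any coset $\psi_j(\bar a,M)$ whose underlying subgroup $H_j$ has infinite index in $H$ may be discarded from a covering without changing whether it covers. Therefore whether $\phi(\bar a,M)\subseteq\bigcup_j\psi_j(\bar a,M)$ depends only on the finitely many \emph{finite} indices $[H:H_j]$---which are fixed invariants of $M$ expressible by invariant sentences of exactly the form $[H:H\cap H_j]\geq k$---together with combinatorial incidence data among the surviving cosets, namely which $\psi_j(\bar a,M)$ are nonempty and how they intersect. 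Each such incidence fact is itself pp-expressible in $\bar x$ (for instance, nonemptiness is $\exists y\,\psi_j$, and a pairwise meet is $\exists y\,(\psi_i\wedge\psi_j)$), so the non-covering condition---and hence the original block---reduces to a Boolean combination of pp-formulas in $\bar x$ and invariant sentences, completing the lemma.
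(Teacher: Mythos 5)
Your proposal takes essentially the same approach as the paper: the paper's entire proof is a citation of the Baur--Monk theorem (Prest, Corollaries~2.16 and 2.19, p.~37, or Hodges, p.~655), which is exactly the ``most economical route'' you lead with. Your reconstruction --- reduction to an existential block $\exists y\,(\phi\wedge\bigwedge_j\neg\psi_j)$ with pp $\phi,\psi_j$, the coset structure of pp-solution sets, and B.~H.~Neumann's lemma to discard infinite-index cosets and reduce the covering condition to index invariants plus pp-expressible incidence data --- is the standard proof of that theorem and is sound, so nothing further is needed.
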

\begin{proof}
See Prest~\cite{Prest1988}~Corollaries~2.16~\&~2.19~p.~37, or Hodges~\cite{Hodges1993aa}~p.~655.
\end{proof}
\noindent The main idea of the proof of the main Theorem~\ref{Den:cor:mymythmdddd} is to isolate the invariant sentences in the modules related to the Denjoy integral, which we do in Corollary~\ref{big:cor}.

To this end, it will be helpful to briefly take note of some special cases of pp-formulas. A pp-formula~$\varphi(x_{1}, \ldots, x_{j})$ is said to be {\it basic} if it can be written as \;$\sum_{\ell=1}^{j} r_{\ell} x_{\ell}=0$ or $\exists \; y \;  (\sum_{\ell=1}^{j} r_{\ell} x_{\ell}) + sy =0$. That is, over an~$R$-module~$M$, the basic pp-formula definable sets are~$\ker(\overline{r})$ or~$\overline{r}^{-1} s M$. In this section, if $\overline{r}=(r_1, \ldots, r_j)$ is a tuple of ring elements, then we will write $\overline{r}^{-1}Y$ to denote the inverse image of $Y$ under the map $\overline{r}: M^j\rightarrow M$ given by $\overline{x}\mapsto \sum_{\ell=1}^{j} r_{\ell} x_{\ell}$. This notation ought not be confused with anything to do with multiplicative inverses in the ring. Further, If~$R$ is a PID, then every pp-formula formula is equivalent to a finite conjunction of basic pp-formulas, and if~$R$ is countable, then given a pp-formula one can compute from~$R$ the finite conjunction of basic pp-formulas (cf. Prest~\cite{Prest1988}~Theorem~2.$\mathbb{Z}$.1~pp.~46-47). Finally, note that if $M$ is a module over a commutative ring $R$, then multiplication by an element of $R$ is a homomorphism of $M$, and when the map is bijective it is an automorphism of $M$.

To calculate the pp-definable subgroups of modules related to the Denjoy integral, we briefly recall some elements of Riesz theory from integral equations. Suppose that~$M$ is a normed space. Then a {\it compact linear operator}~$q:M\rightarrow M$ is a linear operator which maps bounded sets to sets with compact closure. The Riesz Theorem says then that if $M$ is a normed space and~$q:M\rightarrow M$ is a compact linear operator, then the map~$1+q$ is surjective if and only if~$1+q$ is injective (cf. Kress~\cite{Kress1999}~pp.~29-30). Using this theorem we can then show:
\begin{prop} Suppose that~$p\in \mathbb{R}[X]$ such that~$X\nmid p$. Then~$p:C[a,b]\rightarrow C[a,b]$ is an automorphism of the $\mathbb{R}[X]$-module $C[a,b]$.
\end{prop}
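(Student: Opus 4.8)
The plan is to use the nonvanishing of the constant term forced by $X\nmid p$. Writing $p=\sum_{k=0}^{n} c_k X^{k}$, the hypothesis $X\nmid p$ says exactly that $c_0\neq 0$, so we may factor $p=c_0\,(1+q)$, where $q=c_0^{-1}\sum_{k=1}^{n} c_k X^{k}$ is a polynomial in $X$ with vanishing constant term. By the remark preceding the proposition, multiplication by the ring element $p$ is automatically an $\mathbb{R}[X]$-module endomorphism of $C[a,b]$, and since $c_0\neq 0$ is a unit it suffices to show that $1+q$ is bijective. For this I would invoke the Riesz Theorem quoted above: once $q$ is exhibited as a compact linear operator on $C[a,b]$, the map $1+q$ is surjective if and only if it is injective, so bijectivity reduces to injectivity.

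First I would check that the Volterra operator $X\colon C[a,b]\to C[a,b]$, $Xf(x)=\int_a^x f$, is compact. If $\|f_n\|_u\le M$ then $\|Xf_n\|_u\le M(b-a)$ and $|Xf_n(x)-Xf_n(y)|\le M|x-y|$, so $\{Xf_n\}$ is uniformly bounded and equicontinuous; by Arzel\`a--Ascoli it has a uniformly convergent subsequence, whence $X$ carries bounded sets to relatively compact sets. Since for each $k\ge 1$ the power $X^{k}=X\circ X^{k-1}$ is the composite of the compact operator $X$ with the bounded operator $X^{k-1}$, each $X^{k}$ is compact, and therefore the finite linear combination $q=c_0^{-1}\sum_{k=1}^{n} c_k X^{k}$ is compact.

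The heart of the argument is injectivity, which I would establish directly for $p$. Suppose $p f=0$, that is $\sum_{k=0}^{n} c_k X^{k} f=0$ in $C[a,b]$. Put $y=X^{n} f$; a short induction shows $y\in C^{n}[a,b]$ with $y^{(n-k)}=X^{k}f$ for $0\le k\le n$, so in particular $y^{(n)}=f$, and since $X^{k}f$ vanishes at $a$ for every $k\ge 1$ we obtain the homogeneous initial data $y(a)=y'(a)=\cdots=y^{(n-1)}(a)=0$. Substituting $X^{k}f=y^{(n-k)}$ into the relation converts it into the constant-coefficient linear ordinary differential equation $c_0\,y^{(n)}+c_1\,y^{(n-1)}+\cdots+c_n\,y=0$, which is genuinely of order $n$ precisely because the leading coefficient $c_0$ is nonzero. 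By the uniqueness theorem for linear ODEs, the only solution satisfying all $n$ vanishing initial conditions is $y\equiv 0$, and hence $f=y^{(n)}=0$. This proves that $1+q$ is injective.

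The main obstacle is exactly this injectivity step: the compactness of $X$ and the reduction through Riesz are routine, but one must recognize that the integral equation $p f=0$ is, under the substitution $y=X^{n}f$, equivalent to an order-$n$ constant-coefficient ODE with completely vanishing initial data, at which point classical ODE uniqueness forces $f=0$. Combining this injectivity with the Riesz Theorem yields surjectivity of $1+q$, so $1+q$ and therefore $p=c_0\,(1+q)$ is a bijective $\mathbb{R}[X]$-module endomorphism, i.e. an automorphism of $C[a,b]$.
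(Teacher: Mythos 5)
Your proposal is correct and follows essentially the same route as the paper's proof: normalize away the nonzero constant term, observe via Arzel\`a--Ascoli that the remaining part of $p$ is a compact operator, invoke the Riesz Theorem to reduce bijectivity to injectivity, and prove injectivity by converting the Volterra equation $pf=0$ into a constant-coefficient linear ODE with vanishing initial data and applying ODE uniqueness. The only (cosmetic) difference is that you substitute $y=X^{n}f$ and derive the ODE for $y$, whereas the paper bootstraps differentiability of $f$ from the equation and differentiates it repeatedly to get the ODE for $f$ itself.
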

\begin{proof}
It suffices to show that it is a bijection. Since~$X\nmid p$, we may without loss of generality write~$p=1+a_{1}X+\cdots+a_{k}X^{k}$. Note that by the Arzel\`a-Ascoli Theorem, one has that $p-1$ is a compact linear operator. Then by the Riesz Theorem, it suffices to show that~$p$ is injective, or what is the same, that the only solution to~$p\cdot f=0$ is~$f=0$. For this, it in turn suffices to show that any solution to~$p\cdot f=0$ would be a solution to a certain higher-order differential equation which has only one solution, namely~$f=0$. For, suppose~$p\cdot f=0$. If one writes this out explicitly, one has $f+a_1 X f +\cdots +a_k X^k f=0$. First note that since~$f$ is in~$C[a,b]$ and~$Xf = \int_a^x f$, it follows from this equation that ~$f(a)=0$. Second, note that if~$f$ satisfies this equation then it is differentiable, and by differentiating once we obtain: $f^{\prime}+a_1 f +\cdots +a_k X^{k-1} f =0$. Iterating this an additional~$k-1$-more times, one has that~$f$~satisfies the initial value problem $f(a)=f^{\prime}(a)=\cdots =f^{(k-1)}(a)=0$ and $f^{(k)}+a_1 f^{(k-1)} + \cdots + a_k f =0$. Then by the uniqueness theorems for this higher-order differential equation, any solution~$f$ to this equation is equal to zero, which is what we wanted to establish.
\end{proof}

The following trick of lifting the Riesz theory to~$\mathrm{Den}[a,b]$ is from the proof of Theorem~3.10 of Federson and Bianconi (\cite{Federson2002} pp.~103~ff), although they restrict themselves to the case of~$\mathrm{Den}[a,b]$ and do not frame this in the language of modules.
\begin{prop} Suppose that~$M$ is a submodule of~$\mathrm{Den}[a,b]$ which contains~$C[a,b]$. Suppose that~$p\in \mathbb{R}[X]$ such that~$X\nmid p$. Then~$p:M\rightarrow M$ is an automorphism of the $\mathbb{R}[X]$-module $M$.
\end{prop}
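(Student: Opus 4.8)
The plan is to show that $p\colon M\to M$ is a bijective $\mathbb{R}[X]$-module homomorphism, leveraging the previous proposition, which already establishes that $p\colon C[a,b]\to C[a,b]$ is a bijection. Since $X\nmid p$, the constant term of $p$ is nonzero, so after scaling by a nonzero real we may assume without loss of generality that $p=1+a_{1}X+\cdots+a_{k}X^{k}$; write $q=p-1=a_{1}X+\cdots+a_{k}X^{k}$, so that $p=1+q$. The crucial observation, which is what makes the lifting work, is that $q$ maps $\mathrm{Den}[a,b]$ into $C[a,b]$: for any $f\in\mathrm{Den}[a,b]$ one has $Xf=\int_{a}^{x}f\in ACG_{\ast}[a,b]\subseteq C[a,b]$, and hence $qf=a_{1}(Xf)+\cdots+a_{k}(X^{k}f)$ is a finite sum of continuous functions, so $qf\in C[a,b]\subseteq M$. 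In particular $p=1+q$ genuinely maps $M$ into $M$, since $M$ is a submodule and therefore closed under multiplication by $X$.

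For injectivity, I would suppose $f\in M$ with $pf=0$. Then $f=-qf$ as elements of the module, and since the right-hand side is continuous by the observation above, $f$ agrees almost everywhere with a continuous function, i.e.\ $f\in C[a,b]$. The previous proposition then gives that $p$ is injective on $C[a,b]$, so $f=0$.

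The main step is surjectivity, and here the idea is to reduce to the continuous case already handled. Given $g\in M$, I would note that $qg\in C[a,b]$, so $-qg\in C[a,b]$; by the surjectivity half of the previous proposition there is $h\in C[a,b]$ with $ph=-qg$. Setting $f=g+h$, which lies in $M$ because $g\in M$ and $C[a,b]\subseteq M$, one computes $pf=pg+ph=(g+qg)+(-qg)=g$. Thus $p$ is onto. Combining this with injectivity shows $p\colon M\to M$ is an automorphism. The only real content beyond bookkeeping is the regularity observation that $q$ raises smoothness by mapping into $C[a,b]$; once that is in hand, both the injectivity and the surjectivity arguments collapse the problem for $M$ onto the already-solved problem for $C[a,b]$.
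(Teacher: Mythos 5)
Your proposal is correct and is essentially the paper's own argument: both proofs rest on the single observation that $XM\subseteq C[a,b]$, so that $q=p-1$ maps $M$ into $C[a,b]$, reducing injectivity (via $f=-qf\in C[a,b]$) and surjectivity (via solving $ph=-(p-1)g$ in $C[a,b]$ and taking $f=g+h$) to the previous proposition about $C[a,b]$. Nothing is missing; your added remarks about the nonzero constant term and a.e.\ identification with a continuous representative are consistent with the paper's conventions.
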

\begin{proof}
Again, it suffices to show that $p:M\rightarrow M$ is bijective. And again, we may without loss of generality assume that~$p=1+a_{1}X+\cdots+a_{k}X^{k}$. To see that~$p$ is injective, note that if~$pf=0$ then~$f=-a_{1}Xf-\cdots -a_{k} X^{k} f$. Since~$XM\subseteq C[a,b]$, we have that~$f\in C[a,b]$ and~$pf=0$ in~$C[a,b]$. But by the previous proposition,~$p:C[a,b]\rightarrow C[a,b]$ is an injection, and hence~$f=0$. So in fact~$p:M\rightarrow M$ is an injection.

To see that~$p:M\rightarrow M$ is a surjection, suppose that~$g\in M$. Since~$XM\subseteq C[a,b]$, we have that~$(p-1)g\in C[a,b]$ and hence~$-(p-1)g\in C[a,b]$. By the previous proposition,~$p:C[a,b]\rightarrow C[a,b]$ is a surjection, and hence there is~$f\in C[a,b]$ such that~$pf=-(p-1)g$. Then~$p(f+g)=g$. Hence, in fact~$p:M\rightarrow M$ is a surjection.
\end{proof}

In the statement of the following proposition, recall the notational conventions introduced immediately after Theorem~\ref{Den:Denmodel:ppElim}, namely that $p^{-1}$ denotes inverse image and not multiplicative inverse.
 \begin{prop}\label{Den:prop:thegroups} Suppose that~$M$ is a submodule of~$\mathrm{Den}[a,b]$ which contains~$C[a,b]$. Suppose further that~$p,q\in \mathbb{R}[X]$. Then~$p^{-1}q M$ is either~$M$ or~$X^{\ell}M$ for some~$\ell> 0$. Further, there is a computable procedure which (i) given~$p,q\in \mathbb{Q}[X]$ determines which of these occurs and which (ii) returns~$\ell> 0$ if the latter occurs.
\end{prop}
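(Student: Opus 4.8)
The plan is to reduce the entire computation to the preceding proposition by factoring out the powers of $X$ dividing $p$ and $q$. I would write $p = X^{a}\tilde p$ and $q = X^{b}\tilde q$, where $a,b\geq 0$ and $X\nmid \tilde p$, $X\nmid \tilde q$; here $a$ and $b$ are simply the $X$-adic valuations of $p$ and $q$, read off from their lowest nonzero coefficients. We may assume $p,q\neq 0$: if $q=0$ then $p^{-1}qM=\ker p$, which is of the first basic type $\ker(\overline r)$ and is handled separately, while if $p=0$ then trivially $p^{-1}qM=M$. By the preceding proposition, $\tilde p$ and $\tilde q$ are automorphisms of $M$, so in particular $\tilde q M = M$ and hence $qM = X^{b}\tilde q M = X^{b}M$. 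Moreover $X$ is injective on $M$: if $Xf=\int_a^x f$ vanishes identically, then $f=0$ almost everywhere, and elements of $M$ are taken modulo almost-everywhere equality; consequently each $X^{a}$ is injective on $M$ as well.

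With these reductions, $p^{-1}qM = \{f\in M : X^{a}\tilde p f \in X^{b}M\}$, and the outcome is governed by the comparison of $a$ and $b$. If $a\geq b$, then for every $f\in M$ one has $X^{a}\tilde p f = X^{b}\bigl(X^{a-b}\tilde p f\bigr)\in X^{b}M$, so the membership condition is vacuous and $p^{-1}qM = M$. If instead $a<b$, I would set $\ell = b-a>0$ and write $X^{b}M = X^{a}(X^{\ell}M)$; since $X^{a}$ is injective, the condition $X^{a}\tilde p f\in X^{a}(X^{\ell}M)$ is equivalent to $\tilde p f\in X^{\ell}M$, so that $p^{-1}qM = \tilde p^{-1}(X^{\ell}M)$.

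It then remains to identify $\tilde p^{-1}(X^{\ell}M)$, and the key point is that $\tilde p$ is an automorphism of $M$ commuting with $X$ (as $\mathbb{R}[X]$ is commutative). The forward inclusion is immediate: if $f = X^{\ell} g$ with $g\in M$, then $\tilde p f = X^{\ell}\tilde p g \in X^{\ell}M$, so $X^{\ell}M\subseteq \tilde p^{-1}(X^{\ell}M)$. For the reverse inclusion, suppose $\tilde p f = X^{\ell} g$ with $g\in M$; using surjectivity of $\tilde p$, choose $h\in M$ with $\tilde p h = g$, so that $\tilde p f = X^{\ell}\tilde p h = \tilde p X^{\ell} h$, and injectivity of $\tilde p$ yields $f = X^{\ell} h\in X^{\ell}M$. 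Hence $\tilde p^{-1}(X^{\ell}M) = X^{\ell}M$, giving $p^{-1}qM = X^{\ell}M$ with $\ell = b-a>0$, as claimed.

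For the computable procedure, given $p,q\in\mathbb{Q}[X]$ one computes the valuations $a$ and $b$ by scanning the rational coefficients for the least index with a nonzero entry, which is effective, then compares $a$ and $b$, returning ``$M$'' when $a\geq b$ and returning $\ell = b-a$ when $a<b$. I expect the main obstacle to be not this bookkeeping but making sure the two cancellations of $X^{a}$ are actually licensed; these rest on the injectivity of the indefinite-integral operator on $M$, and the whole dichotomy collapses to a triviality only because the preceding proposition (via the Riesz theorem) guarantees that the $X$-free factors $\tilde p,\tilde q$ act as automorphisms. The essential care, then, lies in keeping the valuation reduction and the automorphism/injectivity inputs cleanly separated.
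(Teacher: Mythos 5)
Your proof is correct and takes essentially the same route as the paper's: extract the powers of $X$ from $p$ and $q$, cancel them using injectivity of the indefinite-integral operator, and invoke the preceding Riesz-theorem proposition to make the $X$-free factors automorphisms, which then fix $X^{\ell}M$ (the paper cites the general fact that automorphisms fix $0$-definable sets where you verify the two inclusions by hand, and it factors out the common power $X^{k}$ with $k=\min(a,b)$ rather than the two separate valuations, but these are the same computation). Your explicit routing of the degenerate case $q=0$, $p\neq 0$ to the kernel analysis is a sensible touch, since there $p^{-1}qM=\ker p=0$ falls outside the stated dichotomy, a case the paper's proof passes over silently.
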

\begin{proof}
Compute the largest~$k$ such that~$X^{k}$ divides both~$p$ and~$q$. Let~$p=X^{k}p_{0}$ and~$q=X^{k}q_{0}$. Then~$p^{-1}q M=p^{-1}_{0} q_{0} M$ since~$pf+qg = 0$ if and only if~$X^{k}(p_{0}f+q_{0}g)=0$ if and only if~$p_{0}f+q_{0}g=0$. Now either~$X\mid q_{0}$ or~$X\nmid q_{0}$, and we can compute which of these occurs. 

If~$X\mid q_{0}$ then by definition of~$k$ we have~$X\nmid p_{0}$ and so $p_{0}$  is an automorphism of~$M$ as a~$\mathbb{R}[X]$-module. Further, if~$X\mid q_{0}$ then compute the largest~$\ell>0$ such that~$X^{\ell} \mid q_{0}$. Let~$q_{0}=X^{\ell} q_{1}$, where~$X\nmid q_{1}$. Then~$q_{1}$ is an automorphism of~$M$ as a~$\mathbb{R}[X]$-module. Then we have the following, where the last equality is due to the fact that automorphisms fix definable sets: 
\begin{equation}
p^{-1}q M=p^{-1}_{0}q_{0} M = p^{-1}_{0} X^{\ell} q_{1} M = p^{-1}_{0} X^{\ell} M= X^{\ell} M
\end{equation}
On the other hand, suppose that~$X\nmid q_{0}$. Then~$q_{0}$ is an automorphism of~$M$ as a~$\mathbb{R}[X]$-module. Then $p^{-1}q M=p^{-1}_{0}q_{0} M = p^{-1}_{0} M=M$.
\end{proof}

 \begin{prop}\label{Den:prop:thegroups2} Suppose that~$M$ is a submodule of~$\mathrm{Den}[a,b]$ which contains~$C[a,b]$. Suppose further that~$p \in \mathbb{R}[X]$. Then~$\ker(p)$ is either~$0$ or~$M$. Further, there is a computable procedure which given~$p\in \mathbb{Q}[X]$ determines which of these occurs.
\end{prop}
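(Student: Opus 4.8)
The plan is to establish a clean dichotomy: $\ker(p)=M$ exactly when $p=0$, and $\ker(p)=0$ otherwise. First I would dispose of the trivial case. If $p=0$ then $pf=0$ for every $f\in M$, so $\ker(p)=M$. The substance of the proposition is therefore the claim that $\ker(p)=0$ whenever $p\neq 0$, and this I would reduce to the injectivity of multiplication by $X$ together with the automorphism result already proved for polynomials not divisible by $X$.

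So suppose $p\neq 0$ and, exactly as in the proof of Proposition~\ref{Den:prop:thegroups}, factor $p=X^{k}p_{0}$ where $k\geq 0$ is the largest power of $X$ dividing $p$ and $X\nmid p_{0}$. By the previous proposition, since $X\nmid p_{0}$, multiplication by $p_{0}$ is an automorphism of $M$, hence injective. Because $\mathbb{R}[X]$ is commutative, the maps ``multiplication by $X^{k}$'' and ``multiplication by $p_{0}$'' commute, and multiplication by $p$ is their composition; so to conclude that $p$ is injective — and thus that $\ker(p)=0$ — it suffices to show that multiplication by $X$ is injective on $M$, for then $X^{k}$ is injective as an iterate of an injective map, and the composition of injective maps is injective.

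The key step, and the only place requiring genuine care, is thus checking that $\ker(X)=0$ in $M$. Here I would argue as follows: if $Xf=0$, that is $\int_{a}^{x}f=0$ for all $x\in[a,b]$, then the indefinite integral $F(x)=\int_{a}^{x}f$ is identically zero. By the Fundamental Theorem of Calculus for Denjoy integrals (Theorem~\ref{Den:thm:FTCforDenprior}) one has $F^{\prime}=f$ almost everywhere, and since $F\equiv 0$ forces $F^{\prime}\equiv 0$, we get $f=0$ almost everywhere, i.e.\ $f=0$ in $M$. This gives $\ker(X)=0$ and completes the dichotomy. I do not expect this to pose a real obstacle, as it rests only on the elementary fact that a Denjoy integrable function whose indefinite integral vanishes identically is zero almost everywhere.

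Finally, the computable procedure is immediate from the dichotomy: given $p\in\mathbb{Q}[X]$, one simply tests whether every coefficient of $p$ is zero. If so, $\ker(p)=M$; otherwise $\ker(p)=0$. This decision is plainly effective, so no further work is needed for the computability clause.
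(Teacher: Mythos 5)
Your proposal is correct and follows essentially the same route as the paper: factor $p=X^{k}p_{0}$ with $X\nmid p_{0}$, invoke the automorphism proposition for $p_{0}$, and reduce to the injectivity of multiplication by $X$, with the zero-coefficient test giving the computability clause. The only difference is one of explicitness: the paper asserts $X^{k}p_{0}f=0$ iff $p_{0}f=0$ without comment, whereas you justify the injectivity of $X$ via Theorem~\ref{Den:thm:FTCforDenprior} (a continuous indefinite integral vanishing identically forces $f=0$ a.e.), a detail worth having made explicit.
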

\begin{proof}
If~$p$ is zero then~$\ker(p)=M$, and we can compute whether this occurs. If~$p$ is non-zero, then compute the largest~$k$ such that~$X^{k}$ divides~$p$. Let~$p=X^{k}p_{0}$. Then~$\ker(p)=\ker(p_{0})$ since~$X^{k}p_{0} f=0$ if and only if~$p_{0} f=0$. Then~$X\nmid p_{0}$ and so we have that~$p_{0}$ is an automorphism of~$M$ as an~$\mathbb{R}[X]$-module and so~$\ker(p_{0})=0$.
\end{proof}

\begin{cor}\label{big:cor} Suppose that~$M$ is a submodule of~$\mathrm{Den}[a,b]$ which contains~$C[a,b]$. Suppose further that one of the following conditions holds: (i)~$M=C[a,b]$ or (ii)~$M$ is subinterval-closed. Suppose finally that ~$G, H$ are pp-definable subgroups of~$M$. Then~$[G:G\cap H]=1$ or~$[G:G\cap H]$ infinite, and from formulas defining~$G$ and~$H$ we can compute which of these occurs. Further, this procedure is uniform in such~$M$, in that formulas for~$G$ and~$H$ will return the same values for~$[G:G\cap H]$ for all such~$M$.
\end{cor}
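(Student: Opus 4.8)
The plan is to put an arbitrary pp-definable subgroup of $M$ into a normal form using the classification already obtained in Propositions~\ref{Den:prop:thegroups} and~\ref{Den:prop:thegroups2}, and then to read off the index from a short case analysis driven by Theorem~\ref{Den:Denmodel:index}. Since we are dealing with subgroups of $M$ itself, $G$ and $H$ are each defined by a pp-formula in one free variable. First I would invoke the fact recorded after Theorem~\ref{Den:Denmodel:ppElim}, that because $\mathbb{R}[X]$ and $\mathbb{Q}[X]$ are PIDs every such pp-formula is equivalent to a finite conjunction of basic pp-formulas, and that over the countable ring $\mathbb{Q}[X]$ this conjunction is computable from the given formula. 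Thus $G$ is a finite intersection of basic pp-definable subgroups, each of the form $\ker(p)$ or $p^{-1}qM$. By Proposition~\ref{Den:prop:thegroups2} each $\ker(p)$ is $0$ or $M$, and by Proposition~\ref{Den:prop:thegroups} each $p^{-1}qM$ is $M$ or $X^{\ell}M$ for some $\ell>0$; both propositions supply procedures, depending only on the polynomials and not on $M$, that decide which case holds and return the relevant $\ell$.

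Next I would use that these candidate subgroups are linearly ordered by inclusion. Since $XM\subseteq C[a,b]\subseteq M$, one has $M\supseteq XM\supseteq X^{2}M\supseteq\cdots\supseteq 0$, writing $X^{0}M=M$. Consequently the finite intersection defining $G$ collapses to its smallest member: $G=0$ if some conjunct contributes $\ker(p)=0$, and otherwise $G=X^{m}M$, where $m$ is the maximum of the exponents produced by the conjuncts. The same holds for $H$, so $G$, $H$, and therefore $G\cap H$ all lie in $\{0\}\cup\{X^{\ell}M:\ell\geq 0\}$; over $\mathbb{Q}[X]$ their representatives are computed by the two propositions together with trivial maximum-and-intersection bookkeeping. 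Everything here is uniform in $M$, because the shape assigned to each subgroup depends only on the defining polynomials.

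It then remains to compute $[G:G\cap H]$ from these normal forms. As the relevant subgroups are totally ordered by inclusion, $G\cap H$ is the smaller of $G$ and $H$, so $[G:G\cap H]=1$ exactly when $G\subseteq H$, which is decidable by comparing normal forms ($0$ is contained in everything, and $X^{m}M\subseteq X^{n}M$ iff $m\geq n$). Otherwise $G=X^{m}M\neq 0$ and $G\cap H=H\subsetneq G$. If $H=0$ then $[X^{m}M:0]$ is infinite, since $X^{m}M$ contains the nonzero real vector space $X^{m}C[a,b]$. If $H=X^{n}M$ with $m<n$, then by multiplicativity of the index
\begin{equation}
[X^{m}M:X^{n}M]=\prod_{k=m}^{n-1}[X^{k}M:X^{k+1}M],
\end{equation}
and each factor is infinite by Theorem~\ref{Den:Denmodel:index}, so the index is infinite. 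Hence $[G:G\cap H]$ is always $1$ or infinite; over $\mathbb{Q}[X]$ the procedure decides which, and since each step depends only on the formulas while Theorem~\ref{Den:Denmodel:index} holds for every admissible $M$, the answer is the same for all such $M$.

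The hard part will be the bookkeeping that collapses the conjunction to a single normal form while keeping the procedure uniform: one must verify that intersecting the basic pieces really does yield $0$ or a single $X^{\ell}M$, which rests on the nesting $X^{\ell+1}M\subseteq X^{\ell}M$ and hence on $XM\subseteq C[a,b]\subseteq M$, and that the exponent extracted does not depend on $M$. Once this normal form is in hand, the index computation is routine given Theorem~\ref{Den:Denmodel:index}.
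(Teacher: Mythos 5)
Your proof is correct and follows essentially the same route as the paper: reduce $G$ and $H$ to finite conjunctions of basic pp-formulas via pp-elimination over a PID, classify each basic subgroup as $0$, $X^{\ell}M$, or $M$ by Propositions~\ref{Den:prop:thegroups} and~\ref{Den:prop:thegroups2}, collapse the intersection along the chain $M\supseteq XM\supseteq X^{2}M\supseteq\cdots\supseteq 0$, and settle the index by the same case analysis using Theorem~\ref{Den:Denmodel:index}. You merely spell out two steps the paper compresses into single phrases --- that the conjunction collapses to the smallest member of the chain, and that $[X^{m}M:X^{n}M]$ is infinite for $m<n$ via multiplicativity (or monotonicity) of the index over the consecutive quotients --- both of which are sound.
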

\begin{proof}
By the two previous propositions,~$G$ and~$H$ are finite conjunctions of the subgroups~$0$,~$X^{\ell}M$, and~$M$, and hence themselves are among the subgroups~$0$,~$X^{\ell}M$, and~$M$. Further by the two previous propositions, given formulas defining~$G$ we can computably determine whether~$G$ (resp.~$H$) is~$0$,~$X^{\ell}M$, or~$M$. So there are nine possible cases to consider. The cases in which~$0$ occurs are trivial, and so there are really only four interesting cases to consider. Case one:~$G=M$ and~$H=M$. Then~$[G:G\cap H]=1$. Case two:~$G=M$ and~$H=X^{k}M$. Then~$[G:G\cap H]$ infinite by Theorem~\ref{Den:Denmodel:index}. Case three:~$G=X^{\ell} M$ and~$H=M$. Then~$[G:G\cap H]=1$. Case four:~$G=X^{\ell} M$ and~$H=X^{k} M$. Then~$[G:G\cap H]=1$ if~$\ell\geq k$ and~$[G:G\cap H]$ infinite if~$\ell<k$ by Proposition~\ref{Den:Denmodel:index}.
\end{proof}

From this Corollary and the fact mentioned at the outset of this section (cf. Theorem~\ref{Den:Denmodel:ppElim}) that the invariant sentences determine the complete theory of a module, we can immediately deduce Theorem~\ref{Den:cor:mymythmdddd}.

\section{Further Questions}\label{sec:conclusions}

\noindent In addition to Questions~\ref{qkd1}-\ref{qkd2} mentioned in the introductory section~\S~\ref{Den:Den01}, a couple of other questions are left open by our study:

\begin{Q}\label{eqn:q0} In Theorem~\ref{thm:mynewthing}, it was shown that $\mathrm{Den}[a,b]$ is a ${\bf \Sigma^1_2}$-definable non-analytic subset of $M[a,b]$. Can it be shown that $\mathrm{Den}[a,b]$ is not coanalytic? If it is not coanalytic, can it be shown that it is not ${\bf \Delta^1_2}$?
\end{Q}

\begin{Q}\label{eqn:q1} In Theorem~\ref{Den:cor:main2prior} and Theorem~\ref{thm:mainnew5}, certain sets are shown to be coanalytic but not Borel. Can one show that these sets are coanalytic complete?
\end{Q}

\begin{Q}
In the last sections we showed that $C[a,b]$, $L^{1}[a,b]$, $\langle \mathrm{Den}_{\alpha}[a,b]\rangle$, and $\mathrm{Den}[a,b]$ are elementarily equivalent as $\mathbb{R}[X]$ (or $\mathbb{Q}[X]$-modules). Are they non-isomorphic in this signature? Obviously the elementary equivalence result all by itself -- in abstraction from the non-superstability and decidability results-- would be less interesting if it turned out that they were all isomorphic.
\end{Q}

\begin{Q} Do the non-superstability, elementary equivalence, and decidability results from the last sections still hold if one views $C[a,b]$, $L^{1}[a,b]$, $\langle \mathrm{Den}_{\alpha}[a,b]\rangle$, and $\mathrm{Den}[a,b]$ as $\mathbb{R}[X]$ or $\mathbb{Q}[X]$-modules, where alternatively $Xf\mapsto \int_{a}^{b} K(x,y) f(y)dy$ for some appropriate real-valued continuous function~$K(x,y)$? Note that some care has to be exercised with respect to the choice of $K$, since $\mathrm{Den}[a,b]$ is not closed under multiplication (cf. Swartz \cite{Swartz2001} Example~14 p.~43). 
\end{Q}

\subsection*{Acknowledgements}

I would like to take this opportunity to thank Peter~Cholak and  Slawomir~Solecki for their guidance and advice on this project, and I would like to thank L. Brown Westrick for some very helpful conversations and feedback. Similarly, I am grateful to the editors and referees at Fundamenta Mathematicae, whose generous comments measurably improved the paper. Further, I received valuable feedback when presenting this work on December~9,~2008 at the Urbana Logic Seminar, on February~24,~2009 at the Southern Wisconsin Logic Seminar in Madison, and on December~8,~2009 at the Equipe d'Analyse Fonctionnelle, Institut de Math\'ematiques de Jussieu, Paris. Finally, I would like to acknowledge the support of the National Science Foundation (under NSF Grants 02-45167, EMSW21-RTG-03-53748, EMSW21-RTG-0739007, and DMS-0800198).

\bibliographystyle{alpha}
\bibliography{denjoy.bib}

\end{document}